\DeclareMathAlphabet{\mathpzc}{OT1}{pzc}{m}{it}
\newcommand{\tX}{\widetilde{X}}
\newcommand{\simu}{\sim_{\scru}}
\newcommand{\simequ}{\simeq_{\scru}}
\newcommand{\dt}{\Delta_{2}}
\newcommand{\dtr}{|\Delta_{2}|}
\newcommand{\pdtr}{|\partial\Delta_{2}|}
\newcommand{\spu}{\spanier(\mathscr{U},x_0)}
\newcommand{\spv}{\spanier(\mathscr{V},x_0)}
\newcommand{\tspu}{\Pi^{Sp}(\mathscr{U},x_0)}
\newcommand{\tspv}{\Pi^{Sp}(\mathscr{V},x_0)}
\newcommand{\tspw}{\Pi^{Sp}(\mathscr{W},x_0)}
\newcommand{\nulhu}{\nu(\scru,x_0)}
\newcommand{\nulhv}{\nu(\scrv,x_0)}
\newcommand{\tsp}{\Pi^{Sp}}
\newcommand{\spanier}{\pi^{Sp}}
\newcommand{\tspx}{\tsp(X,x_0)}
\newcommand{\spx}{\spanier(X,x_0)}
\newcommand{\pionex}{\pi_{1}(X,x_0)}
\newcommand{\shape}{\check{\pi}_{1}(X,x_0)}
\newcommand{\ox}{\mathcal{O}(X)}
\newcommand{\scru}{\mathscr{U}}
\newcommand{\scrv}{\mathscr{V}}
\newcommand{\scrw}{\mathscr{W}}
\newcommand{\nerveu}{N(\mathscr{U})}
\newcommand{\nervev}{N(\mathscr{V})}
\newcommand{\reva}{\overline{\alpha}}
\newcommand{\revb}{\overline{\beta}}
\newtheorem{theorem}{Theorem}
\newtheorem{lemma}[theorem]{Lemma}
\newtheorem{proposition}[theorem]{Proposition}
\newtheorem{corollary}[theorem]{Corollary}
\newtheorem{definition}[theorem]{Definition}
\newtheorem{example}[theorem]{Example}
\newtheorem{remark}[theorem]{Remark}
\begin{document}

\title{Thick Spanier groups and the first shape group}
\author{Jeremy Brazas and Paul Fabel}
\maketitle

\begin{abstract}
We develop a new route through which to explore $\ker \Psi_{X},$ the kernel
of the $\pi_{1}-$shape group homomorphism determined by a general space $X$,
and establish, for each locally path connected, paracompact Hausdorff space $%
X$, $\ker \Psi_{X}$ is precisely the Spanier group of $X$.
\end{abstract}

\section{Introduction}

It is generally challenging to understand the fundamental group of a locally
complicated space $X.$ A common tactic is to consider the image $\Psi
_{X}(\pi _{1}(X,x_{0}))$ as a subgroup of the first shape homotopy group $%
\check{\pi}_{1}(X,x_{0})$ via the natural homomorphism $\Psi _{X}:\pi
_{1}(X,x_{0})\rightarrow \check{\pi}_{1}(X,x_{0})$ arising from the \v{C}ech
expansion. In particular, if $\Psi _{X}$ is injective, $X$ is said to be $%
\pi _{1}$-shape injective and one gains a characterization of the elements
of $\pi _{1}(X,x_{0})$ as sequences in an inverse limit of fundamental
groups of polyhedra. Thus $\ker \Psi _{X}$ can be thought of as the data of
the fundamental group forgotten when passing to the first shape group. In
this paper, we develop a new lens in which to study $\Psi _{X}$ and identify
new characterizations of $\ker \Psi _{X}$ in terms of familiar subgroups of
the fundamental group.

The Spanier group $\pi ^{Sp}(X,x_{0})$ of a space $X$, introduced in \cite%
{FRVZ11}, is a subgroup of $\pi _{1}(X,x_{0})$ useful for identifying loops
deformable into arbitrarily small neighborhoods \cite{MPT,Virk1,Virk2}.
Moreover the existence of generalized covering maps \cite{BDLM,FZ07,Dydak11} is
intimately related to both $\pi^{Sp}(X,x_0)$ and $\ker \Psi _{X}$.

It is apparently an open question to understand exactly when $\pi
^{Sp}(X,x_{0})=\ker \Psi _{X}$. On the one hand, for a general space $X,$
the inclusion $\pi ^{Sp}(X,x_{0})\subseteq \ker \Psi _{X}$ holds \cite{FZ07}%
. On the other hand, examples show inclusion can be strict if $X$ fails to
be non-locally path connected \cite{FZ07,FRVZ11}. Our main result, Theorem \ref{leftexactsequence}, captures a decent class of spaces such that $\pi
^{Sp}(X,x_{0})=\ker \Psi _{X}$.

\textit{If} $X$ \textit{is a locally path connected paracompact Hausdorff
space, then} $\ker \Psi _{X}$ \textit{is precisely the Spanier group} $\pi
^{Sp}(X,x_{0})$. \newline

In particular if $X$ is a Peano continuum then $\ker \Psi _{X}=\pi
^{Sp}(X,x_{0}).$ A variety of 2-dimensional examples help motivate this
paper's content as follows.

The occurrence of $\pi _{1}$-shape injectivity (i.e. the vanishing of $\ker
\Psi _{X}$) has been studied in a number of cases: For example if $X$ has
Lebesgue covering dimension $\leq 1$ \cite{Eda98,CC06}, if $X$ is a subspace
of a closed surface (or a planar set) \cite{FZ05}, or if $X$ is a
fractal-like tree of manifolds \cite{FG05}, then $\ker \Psi _{X}=1$. The
equality $\pi ^{Sp}(X,x_{0})=\ker \Psi _{X}$ now indicates $X$ is $\pi _{1}$%
-shape injective iff $\pi ^{Sp}(X,x_{0})=1$.

At the other extreme, the equality $\ker \Psi _{X}=\pi _{1}(X,x_{0})$ can
happen if $X$ is a 2-dimensional Peano continuum (e.g. if $X$ is the join of
two cones over the Hawaiian earring). In particular, this equality will
occur if every loop in $X$ is a "small loop" \cite{Virk1,Virk2} or, more
generally, if $X$ is a Spanier space \cite{MPT}. For such spaces,
understanding $\pi _{1}(X,x_{0})$ is equivalent to understanding $\ker \Psi
_{X}.$

Finally, there is a rich \textquotedblleft middle ground\textquotedblright\
where $\ker \Psi _{X}$ lies strictly between $1$ and $\pi _{1}(X,x_{0}).$
Two important such Peano continua appear in \cite{CMRZZ08,FRVZ11}, one of
which is \textquotedblleft homotopically path Hausdorff\textquotedblright\
and one of which is \textquotedblleft strongly homotopically path
Hausdorff\textquotedblright . In both cases, the verification of the
aforementioned properties and the attempts to manufacture generalized
notions of covering spaces, rely heavily on an understanding of $\pi
^{Sp}(X,x_{0})$.

To investigate when $\ker \Psi _{X}=\pi ^{Sp}(X,x_{0})$, we first modify the
familiar definition. The Spanier group with respect to an open cover $%
\mathscr{U}$ (denoted $\pi ^{Sp}(\mathscr{U},x_{0})$) was introduced in
Spanier's celebrated textbook \cite{Spanier66}. Recall $\pi ^{Sp}(X,x_{0})$
is defined as the intersection of the groups $\pi ^{Sp}(\mathscr{U},x_{0})$
over all open covers $\mathscr{U}$. While the usual groups $\pi ^{Sp}(%
\mathscr{U},x_{0})$ are useful for studying covering space theory and its
generalizations, our modified version, the so-called \textit{thick Spanier
group of }$X$ \textit{\ with respect to} $\mathscr{U}$ (denoted $\Pi ^{Sp}(%
\mathscr{U},x_{0})$) is useful for studying the shape homomorphism $\Psi
_{X} $. In particular, the main technical achievement of this paper, Theorem %
\ref{shortexactsequence}, states:\newline

\textit{If} $\mathscr{U}$ \textit{is an open cover consisting of path
connected sets and} $p_{\mathscr{U}}:X\rightarrow |N(\mathscr{U})|$ \textit{%
is a canonical map to the nerve of} $\mathscr{U}$\textit{, then there is a
short exact sequence} 
\begin{equation*}
\xymatrix{ 1 \ar[r] & \tspu \ar[r] & \pi_1(X,x_0) \ar[r]^-{p_{\scru\ast}} &
\pi_{1}(|\nerveu|,U_0) \ar[r] & 1}.
\end{equation*}

The utility of this short exact sequence is the identification of a
convenient set of generators of $\ker p_{\mathscr{U}\ast}$. The same
sequence with the ordinary Spanier group $\pi^{Sp}(\mathscr{U},x_0)$ in
place of $\Pi^{Sp}(\mathscr{U},x_0)$ fails to be exact even in simple cases:
See Example \ref{circle}.

This paper is structured as follows:

In Section 2, we include necessary preliminaries on simplicial complexes and
review the constructions of the \v{C}ech expansion of a space $X$ in terms
of canonical maps $p_{\mathscr{U}}:X\rightarrow |N(\mathscr{U})|$ (where $|N(%
\mathscr{U})|$ is the nerve of an open cover $\mathscr{U}$ of $X$), the
first shape group, and the homomorphism $\Psi _{X}$.

In Section 3, we define and study thick Spanier groups and their
relationship to ordinary Spanier groups. Thick Spanier groups are
constructed by specifying generators represented by loops lying in pairs of
intersecting elements of $\mathscr{U}$; Section 4 is then devoted to
characterizing generic elements of $\Pi^{Sp}(\mathscr{U},x_{0})$ in terms of
a homotopy-like equivalence relation on loops which depends on the given
open cover $\mathscr{U}$.

Section 5 is devoted to a proof of Theorem \ref{shortexactsequence}
guaranteeing the exactness of the above sequence.

Sections 6 and 7 include applications of the above level short exact
sequences: In Section 6, we obtain an exact sequence: 
\begin{equation*}
\xymatrix{ 1 \ar[r] & \spx \ar[r] & \pi_1(X,x_0) \ar[r]^-{\Psi_{X}} & \shape}
\end{equation*}
and thus identify $\ker \Psi_{X}$ and $\pi^{Sp}(X,x_{0})$ for a large class
of spaces (See Theorem \ref{leftexactsequence}). Finally, in Section 7, we
find application in the theory of a topologically enriched version of the
fundamental group. By construction, the first shape group $\check{\pi}%
_{1}(X,x_0)$ is the inverse limit of discrete groups; the fundamental group $%
\pi_{1}(X,x_0)$ inherits the structure of a topological group when it is
given the pullback topology with respect to $\Psi_X$ (the so-called \textit{shape topology}). We apply the above results to identify a convenient basis for the topology of $\pi_{1}(X,x_0)$ and show this topology consists
precisely of the data of the covering space theory of $X$.

\section{Preliminaries and Definitions}

Throughout this paper, $X$ is assumed to be a path connected topological
space with basepoint $x_0$.

\subsection{Simplicial complexes and paths}

We call upon the theory of simplicial complexes in standard texts such as 
\cite{MS82} and \cite{Mu84}. Much of the notation used is in line with these
sources.

If $K$ is an abstract or geometric simplicial complex and $n\geq 0$ is an
integer, $K_n$ denotes the n-skeleton of $K$ and if $v$ is a vertex of $K$, $%
St(v,K)$ and $\overline{St}(v,K)$ denote the open and closed star of the
vertex $v$ respectively. When $K$ is abstract, $|K|$ denotes the geometric
realization. If vertices $v_1,...,v_n$ span an n-simplex of $|K|$, then $%
[v_1,v_2,...,v_n]$ denotes the n-simplex with the indicated orientation.
Finally, for each integer $n\geq 0$, $sd^{n}|K|$ denotes the n-th
barycentric subdivision of $K$.

We frequently make use of the standard abstract 2-simplex $\Delta_{2}$ which
consists of a single 2-simplex and its faces and whose geometric realization
is $|\Delta_{2}|=\{(t_1,t_2)\in \mathbb{R}^{2}|t_1+t_2\leq 1, t_1,t_2\geq 0\}$. The
boundary $\partial\Delta_{2}\subseteq \Delta_{2}$ is the 1-skeleton $%
(\Delta_{2})_1$ and its realization $|\partial\Delta_{2}|$ is homeomorphic
to the unit circle. If $K$ is a geometric subcomplex of a subdivision of $%
|\Delta_{2}|$ containing the origin, then the origin is assumed to be the
basepoint.

We use the following conventions for paths and loops in simplexes and
general spaces. A \textit{path} in a space $X$ is a map $p:[0,1]\rightarrow
X $ from the unit interval. The \textit{reverse path} of $p$ is the path
given by $\overline{p}(t)=p(1-t)$ and the constant path at a point $x\in X$ will
be denoted $c_{x}$. If $p_{1},p_{2}..., p_{n}:[0,1]\rightarrow X$ are paths
in $X $ such that $p_{j}(1)=p_{j+1}(0)$, the concatenation of this sequence
is the unique path $p_{1}\ast p_{2}\ast \dots\ast p_{n}$, sometimes denoted $
\ast _{j=1}^{n}p_{j}$, whose restriction to $\left[ \frac{j-1}{n},\frac{j}{n}
\right]$ is $p_{j}$.

A path $p:[0,1]\to X$ is a \textit{loop} if $p(0)=p(1)$. Quite often it will
be convenient to view a loop as a map $|\partial\Delta_{2}|\to X$ where $%
|\partial\Delta_{2}|$ is identified with $[0,1]/\{0,1\}\cong S^1\subseteq 
\mathbb{R}^{2}$ by an orientation preserving homeomorphism in $\mathbb{R}%
^{2} $. A loop $p:|\partial\Delta_{2}|\to X$ is \textit{inessential} if it
extends to a map $|\Delta_{2}|\to X$ and is \textit{essential} if no such
extension exists. Two loops $p,p^{\prime}:|\partial\Delta_{2}|\to X$ are 
\textit{freely homotopic} if there is a homotopy $H:|\partial\Delta_{2}|%
\times [0,1]\to X$ such that $H(x,0)=p(x)$ and $H(x,1)=p^{\prime}(x)$. If $%
p,p^{\prime}$ are loops based at $x_0\in X$, then they are \textit{homotopic
rel. basepoint} if there is a homotopy $H$ as above such that $%
H(\{(0,0)\}\times [0,1])=x_0$.

We use special notation for edge paths in a geometric simplicial complex $K$%
: If $v_1,v_2$ are vertices in $K$, we identify the oriented 1-simplex $%
[v_1,v_2]$ with the linear path from vertex $v_1$ to $v_2$ on $[v_1,v_2]$.
Similarly, $[v,v]$ denotes the constant path at a vertex $v$. An \textit{%
edge path} is a concatenation $E=[v_0,v_1]\ast [v_1,v_2]\ast \dots \ast
[v_{n-1},v_n]$ of linear or constant paths in the 1-skeleton of $K$. If $%
E(0)=E(1)$, then $E$ is an \textit{edge loop}. It is a well-known fact that
all homotopy classes (rel. endpoints) of paths from $v_0$ to $v_1$ are
represented by edge paths.

\subsection{The \v{C}ech expansion and the first shape group}

We now recall the construction of the first shape homotopy group $\check{\pi}%
_{1}(X,x_0)$ via the \v{C}ech expansion. For more details, see \cite{MS82}.

Let $\mathcal{O}(X)$ be the set of open covers of $X$ direct by refinement.
Similarly, let $\mathcal{O}(X,x_0)$ be the set of open covers with a
distinguished element containing the basepoint, i.e. the set of pairs $(%
\mathscr{U},U_0)$ where $\mathscr{U}\in \mathcal{O}(X)$, $U_0\in \mathscr{U}$
and $x_0\in U_0$. We say $(\mathscr{V},V_0)$ refines $(\mathscr{U},U_0)$ if $%
\mathscr{V}$ refines $\mathscr{U}$ as a cover and $V_0\subseteq U_0$.

The nerve of a cover $(\mathscr{U},U_0)\in\mathcal{O}(X,x_0)$ is the
abstract simplicial complex $N(\mathscr{U})$ whose vertex set is $N(
\mathscr{U})_0=\mathscr{U}$ and vertices $A_0,...,A_n\in \mathscr{U}$ span
an n-simplex if $\bigcap_{i=0}^{n}A_i\neq \emptyset$. The vertex $U_0$ is
taken to be the basepoint of the geometric realization $|N(\mathscr{U})|$.
Whenever $(\mathscr{V},V_0)$ refines $(\mathscr{U},U_0)$, construct a
simplicial map $p_{\mathscr{U}\mathscr{V}}:N(\mathscr{V})\to N(\mathscr{U})$
, called a \textit{projection}, given by sending a vertex $V\in N(\mathscr{V}
)$ to a vertex $U\in \mathscr{U}$ such that $V\subseteq U$. In particular, $
V_0$ must be sent to $U_0$. Any such assignment of vertices extends linearly to a simplicial map. Moreover, the induced map $|p_{\mathscr{U}\mathscr{V}}|:|N(
\mathscr{V})|\to|N(\mathscr{U})|$ is unique up to based homotopy. Thus the
homomorphism $p_{\mathscr{U}\mathscr{V}\ast}:\pi_{1}(|N(\mathscr{V})|,V_0)\to \pi_{1}(|N(\mathscr{U})|,U_0)$ induced on fundamental groups is
independent of the choice of simplicial map.

Recall that an open cover $\scru$ of $X$ is normal if it admits a partition of unity subordinated to $\scru$. Let $\Lambda$ be the subset of $\mathcal{O}(X,x_0)$ (also directed by refinement) consisting of pairs $(\mathscr{U},U_0)$ where $\scru$ is a normal open cover of $X$ and such that there is a partition of unity $\{\phi_{U}\}_{U\in \mathscr{U}}$ subordinated to $\scru$ with $\phi_{U_0}(x_0)=1$. It is well-known that every open cover of a paracompact Hausdorff space $X$ is normal. Moreover, if $(\scru,U_0)\in \mathcal{O}(X,x_0)$, it is easy to refine $(\scru,U_0)$ to a cover $(\scrv,V_0)$ such that $V_0$ is the only element of $\scrv$ containing $x_0$ and therefore $(\scrv,V_0)\in \Lambda$. Thus, for paracompact Hausdorff $X$, $\Lambda$ is cofinal in $\mathcal{O}(X,x_0)$.

The \textit{first shape homotopy group} is the inverse limit 
\begin{equation*}
\check{\pi}_{1}(X,x_0)=\varprojlim\left(\pi_{1}(|N(\mathscr{U})|,U_0),p_{
\mathscr{U}\mathscr{V}\ast},\Lambda\right).
\end{equation*}

Given an open cover $(\mathscr{U},U_0)\in \mathcal{O}(X,x_0)$, a map $p_{\mathscr{U}}:X\to |N(\mathscr{U})|$ is a \textit{(based) canonical map} if $p_{\mathscr{U}}^{-1}(St(U,N(\mathscr{U})))\subseteq U$ for each $U\in \mathscr{U}$ and $p_{\scru}(x_0)=U_0$. Such a canonical map is guaranteed to exist if $(\mathscr{U},U_0)\in
\Lambda $: find a locally finite partition of unity $\{\phi_{U}\}_{U\in \mathscr{U}}$ subordinated to $\scru$ such that $\phi_{U_0}(x_0)=1$. When $U\in \mathscr{U}$ and $x\in U$, determine $p_{\mathscr{U}}(x)$ by requiring its barycentric coordinate belonging to the vertex $U$ of $|N(\mathscr{U})|$ to be $\phi_{U}(x)$. According to this construction, the requirement $\phi_{U_0}(x_0)=1$ gives $p_{\scru}(x_0)=U_0$.

A canonical map $p_{\mathscr{U}}$ is unique up to based homotopy and whenever $(\mathscr{V},V_0)$ refines $(\mathscr{U},U_0)$; the compositions $p_{\mathscr{U}\mathscr{V}}\circ p_{\mathscr{V}}$ and $p_{\mathscr{U}}$ are homotopic as based maps. Therefore the homomorphisms $p_{\mathscr{U}\ast}:\pi_{1}(X,x_0)\to \pi_{1}(|N(\mathscr{U})|,U_0)$ satisfy $p_{\mathscr{U}\mathscr{V}\ast}\circ p_{\mathscr{V}\ast}=p_{\mathscr{U}\ast}$. These homomorphisms
induce a canonical homomorphism 
\begin{equation*}
\Psi_{X}:\pi_{1}(X,x_0)\to \check{\pi}_{1}(X,x_0)\text{ given by }%
\Psi_{X}([\alpha])=\left([p_{\mathscr{U}}\circ \alpha]\right)
\end{equation*}
to the limit.

It is of interest to characterize $\ker\Psi_{X}$ since, when $\ker\Psi_{X}=1$%
, $\check{\pi}_{1}(X,x_0)$ retains all the data in the fundamental group of $%
X$. A space for which $\ker\Psi_{X}=1$ is said to be $\pi_1$-\textit{shape
injective}.

\section{Thick Spanier groups and their properties}

We begin by recalling the construction of Spanier groups \cite%
{FRVZ11,Spanier66}. Let $\widetilde{X}$ denote the set of homotopy classes
(rel. endpoints) of paths starting at $x_0$, i.e. the star of the
fundamental groupoid of $X$ at $x_0$. As in \cite{Spanier66}, multiplication of homotopy classes of paths is taken in the fundamental groupoid of $X$ so that $[\alpha][\beta]=[\alpha\ast\beta]$ when $\alpha(1)=\beta(0)$.

\begin{definition}
\label{spaniergroupdef}\emph{ Let $\mathscr{U}$ be an open cover of $X$.
The \textit{Spanier group of} $X$ \textit{with respect to} $\mathscr{U}$ is
the subgroup of $\pi_{1}(X,x_0)$, denoted $\pi^{Sp}(\mathscr{U},x_0)$,
generated by elements of the form $[\alpha][\gamma][\overline{\alpha}]$ where $%
[\alpha]\in \widetilde{X}$ and $\gamma:[0,1]\to U$ is a loop based at $%
\alpha(1)$ for some $U\in\mathscr{U}$.}
\end{definition}

Note, in the above definition, $[\alpha]$ is left to vary among all homotopy
(rel. endpoint) classes of paths starting at $x_0$ and ending at $%
\gamma(0)=\gamma(1)$. Thus $\pi^{Sp}(\mathscr{U},x_0)$ is a normal subgroup
of $\pi_{1}(X,x_0)$. Also observe that if $\mathscr{V}$ is an open cover of $%
X$ which refines $\mathscr{U}$, then $\pi^{Sp}(\mathscr{V},x_0)\subseteq
\pi^{Sp}(\mathscr{U},x_0)$.

\begin{definition}
\emph{\ The \textit{Spanier group} of $X$ is the intersection 
\begin{equation*}
\pi^{Sp}(X,x_0)=\bigcap_{\mathscr{U}\in \mathcal{O}(X)}\pi^{Sp}(\mathscr{U}%
,x_0)
\end{equation*}
of all Spanier groups with respect to open covers of $X$. Equivalently, $%
\pi^{Sp}(X,x_0)$ is the inverse limit $\varprojlim\pi^{Sp}(\mathscr{U},x_0)$
of the inverse system of inclusions $\pi^{Sp}(\mathscr{V},x_0)\to \pi^{Sp}(%
\mathscr{U},x_0)$ induced by refinement in $\mathcal{O}(X)$. }
\end{definition}

Note the Spanier group of $X$ is a normal subgroup of $\pi_{1}(X,x_0)$.

\begin{remark}
\emph{\ In the previous two definitions, we are actually using the \textit{%
unbased} Spanier group as defined by the authors of \cite{FRVZ11}. These
authors also define the \textit{based} Spanier group by replacing covers
with covers by pointed sets; the two definitions agree when $X$ is locally
path connected. The unbased version is sufficient for the purposes of this
paper since our main results apply to locally path connected spaces. }
\end{remark}

To study $\Psi_{X}$, we require similar but potentially larger versions of
the Spanier groups constructed above.

\begin{definition}
\emph{\ Let $\mathscr{U}$ be an open cover of $X$. The \textit{thick Spanier
group of} $X$ \textit{with respect to} $\mathscr{U}$ is the subgroup of $%
\pi_{1}(X,x_0)$, denoted $\Pi^{Sp}(\mathscr{U},x_0)$, generated by elements
of the form $[\alpha][\gamma_1][\gamma_2][\overline{\alpha}]$ where $[\alpha]\in 
\widetilde{X}$ and $\gamma_1:[0,1]\to U_1$ and $\gamma_2:[0,1]\to U_2$ are
paths for some $U_1,U_2\in \mathscr{U}$. }
\end{definition}

\begin{figure}[H]
\centering \includegraphics[height=1.5in]{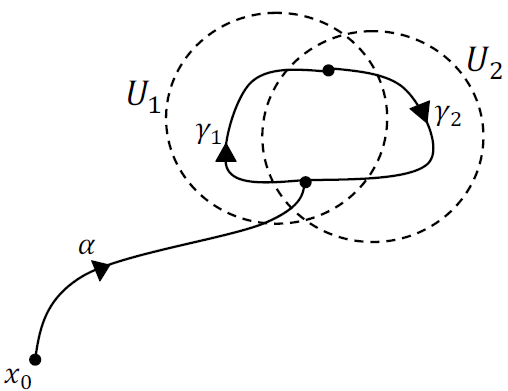}
\caption{A loop representing a generator of the thick Spanier group of $X$
with respect to $\mathscr{U}$.}
\end{figure}

\begin{proposition}
\label{tspanier1} For every open cover $\mathscr{U}$ of $X$, $\Pi^{Sp}(%
\mathscr{U},x_0)$ is a normal subgroup of $\pi_{1}(X,x_0)$ such that $%
\pi^{Sp}(\mathscr{U},x_0)\subseteq \Pi^{Sp}(\mathscr{U},x_0)$. If $%
\mathscr{V}$ is an open cover of $X$ which refines $\mathscr{U}$, then $%
\Pi^{Sp}(\mathscr{V},x_0)\subseteq \Pi^{Sp}(\mathscr{U},x_0)$.
\end{proposition}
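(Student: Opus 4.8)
The plan is to establish all three assertions by direct manipulation of the generating loops, exploiting the fact that each generator of a thick Spanier group has the rigid form $[\alpha][\gamma_1][\gamma_2][\reva]$ with $[\alpha]\in\tX$ and $\gamma_1,\gamma_2$ paths in members of the cover, a form that is stable under the three relevant operations. Throughout, I would keep in mind that for such a product to name a well-defined element of the fundamental groupoid one needs $\alpha(1)=\gamma_1(0)$, $\gamma_1(1)=\gamma_2(0)$, and $\gamma_2(1)=\alpha(1)$; verifying that each substitution below respects these incidence conditions is the only bookkeeping required.

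First, to see $\spu\subseteq\tspu$, I would take an arbitrary generator $[\alpha][\gamma][\reva]$ of the ordinary Spanier group, where $\gamma$ is a loop based at $\alpha(1)$ with image in a single $U\in\scru$. Setting $\gamma_1=\gamma$ and letting $\gamma_2=c_{\alpha(1)}$ be the constant path at $\alpha(1)$, which has image $\{\alpha(1)\}\subseteq U$, one computes $[\alpha][\gamma_1][\gamma_2][\reva]=[\alpha][\gamma][\reva]$ in the groupoid. Thus every generator of the ordinary Spanier group is literally a generator of the thick Spanier group, yielding the inclusion.

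For normality, I would verify that the generating set of $\tspu$ is closed under conjugation by $\pi_1(X,x_0)$; this suffices because conjugation by a fixed element is an automorphism, so it carries any product of generators and their inverses to a product of conjugates of generators, which then lies in the subgroup. Given a generator $g=[\alpha][\gamma_1][\gamma_2][\reva]$ and a class $[\delta]\in\pi_1(X,x_0)$, I would absorb $\delta$ into the free path $\alpha$: the concatenation $\alpha'=\delta\ast\alpha$ satisfies $[\alpha']\in\tX$ and $\alpha'(1)=\alpha(1)$, and since $[\overline{\alpha'}]=[\reva][\overline{\delta}]$ we obtain $[\delta]\,g\,[\delta]^{-1}=[\alpha'][\gamma_1][\gamma_2][\overline{\alpha'}]$, which is again a generator of $\tspu$ with the very same loops $\gamma_1,\gamma_2$.

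Finally, for the refinement inclusion, I would start with a generator $[\alpha][\gamma_1][\gamma_2][\reva]$ of $\tspv$, where $\gamma_i$ has image in some $V_i\in\scrv$. Since $\scrv$ refines $\scru$, I may choose $U_i\in\scru$ with $V_i\subseteq U_i$, so that $\gamma_i$ has image in $U_i$ and the identical product qualifies as a generator of $\tspu$; hence $\tspv\subseteq\tspu$. I do not expect a genuine obstacle in any of these steps: the entire content lies in recognizing that the built-in flexibility of the generators—an arbitrary initial path $\alpha$, two independently chosen cover elements $U_1,U_2$, and the admissibility of a constant second loop—reduces each of the three claims to a one-line substitution.
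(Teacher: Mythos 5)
Your proof is correct and takes essentially the same approach as the paper's, which observes that generators of $\Pi^{Sp}(\mathscr{U},x_0)$ with $U_1=U_2$ are exactly the generators of $\pi^{Sp}(\mathscr{U},x_0)$, that normality holds for the same reason as for ordinary Spanier groups (absorb the conjugating class into $\alpha$), and that the refinement inclusion carries over verbatim. You simply spell out the substitutions and incidence checks that the paper leaves implicit.
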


\begin{proof}
The subgroup $\tspu$ is normal in $\pionex$ for the same reason $\spu$ is normal. We have $\spu\subseteq \tspu$ since the generators $[\alpha][\gamma_1][\gamma_2][\reva]$ of $\tspu$ where $\gamma_1,\gamma_2$ have image in $U_1=U_2\in\scru$ are precisely the generators of $\spu$. The second statement follows as it does for ordinary Spanier groups.
\end{proof}

\begin{example}
\label{circle} \emph{\ It is not necessarily true that $\pi^{Sp}(\mathscr{U}%
,x_0)=\Pi^{Sp}(\mathscr{U},x_0)$ even in the simplest cases. Suppose $%
\mathscr{U}=\{U_1,U_2\}$ is an open cover of the unit circle $X=S^1$
consisting of two connected intervals $U_1,U_2$ such that $U_1\cap U_2$ is
the disjoint union of two connected intervals. Since both $U_1,U_2$ are
simply connected, we have $\pi^{Sp}(\mathscr{U},x_0)=1$. On the other hand, $%
\Pi^{Sp}(\mathscr{U},x_0)$ contains a generator of $\pi_{1}(S^1,x_0)$ and
thus $\Pi^{Sp}(\mathscr{U},x_0)=\pi_{1}(S^1,x_0)$. }
\end{example}

\begin{remark}
\label{naturalityofspanier}\emph{\ Spanier groups and thick Spanier groups
with respect to covers are natural in the following sense: If $f:X\to Y$ is
a map such that $f(x_0)=y_0$ and $\mathscr{W}$ is open cover of $Y$, then $
f^{-1}\mathscr{W}=\{f^{-1}(W)|W\in\mathscr{W}\}$ is an open cover of $X$.
Observe if $f_{\ast}:\pi_{1}(X,x_0)\to\pi_{1}(Y,y_0)$ is the homomorphism
induced on fundamental groups, then $f_{\ast}(\pi^{Sp}(f^{-1}\mathscr{W}%
,x_0))\subseteq \pi^{Sp}(\mathscr{W},y_0)$ and $f_{\ast}(\Pi^{Sp}(f^{-1}%
\mathscr{W},x_0))\subseteq \Pi^{Sp}(\mathscr{W},y_0)$.}
\end{remark}

\begin{definition}
\emph{\ The \textit{thick Spanier group} of $X$ is the intersection 
\begin{equation*}
\Pi^{Sp}(X,x_0)=\bigcap_{\mathscr{U}\in\mathcal{O}(X)}\Pi^{Sp}(\mathscr{U}%
,x_0)
\end{equation*}
of all thick Spanier groups with respect to open covers of $X$. Equivalently 
$\Pi^{Sp}(X,x_0)$ is the inverse limit $\varprojlim \Pi^{Sp}(\mathscr{U}%
,x_0) $ of the inverse system of inclusions $\Pi^{Sp}(\mathscr{V},x_0)\to
\Pi^{Sp}(\mathscr{U},x_0)$ induced by refinement in $\mathcal{O}(X)$. }
\end{definition}

The following proposition follows directly from previous observations.

\begin{proposition}\label{spanierinclusion}
\label{tspanier2} For every space $X$, $\pi^{Sp}(X,x_0)\subseteq
\Pi^{Sp}(X,x_0)$. If every open cover $\mathscr{U}$ of $X$ admits a
refinement $\mathscr{V}$ such that $\Pi^{Sp}(\mathscr{V},x_0)\subseteq
\pi^{Sp}(\mathscr{U},x_0)$, then $\Pi^{Sp}(X,x_0)=\pi^{Sp}(X,x_0)$.
\end{proposition}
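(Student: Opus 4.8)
The plan is to derive both statements formally from the cover-level inclusions already recorded in Proposition \ref{tspanier1}, without revisiting the geometry of the generators. For the first claim, I would start from the fact that $\spu\subseteq\tspu$ holds for \emph{every} open cover $\scru\in\ox$. Intersecting this inclusion over all such $\scru$ immediately yields
\begin{equation*}
\spx=\bigcap_{\scru\in\ox}\spu\subseteq\bigcap_{\scru\in\ox}\tspu=\tspx,
\end{equation*}
which is exactly the first assertion. This step is purely set-theoretic: an inclusion holding index-by-index is preserved under intersection.

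For the equality under the refinement hypothesis, the inclusion $\spx\subseteq\tspx$ is already in hand, so I would only need the reverse containment $\tspx\subseteq\spx$, which I would establish by a direct element-chase. Fix $g\in\tspx$ and an arbitrary cover $\scru\in\ox$; since $\spx=\bigcap_{\scru}\spu$, it suffices to show $g\in\spu$. By hypothesis there is a refinement $\scrv$ of $\scru$ with $\tspv\subseteq\spu$. Because $\tspx$ is by definition the intersection of the groups $\Pi^{Sp}(\scrw,x_0)$ over all covers $\scrw$, and $\scrv$ is one of these covers, we have $g\in\tspx\subseteq\tspv$. Chaining the two inclusions gives $g\in\tspv\subseteq\spu$. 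As $\scru$ was arbitrary, $g$ lies in every $\spu$, hence in $\spx$, so $\tspx\subseteq\spx$ and therefore $\tspx=\spx$.

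I do not anticipate a genuine obstacle: the entire content of the proposition is carried by its hypothesis, and the argument is a formal manipulation of intersections. The one point meriting a moment of care is the direction of the containment $g\in\tspx\subseteq\tspv$ in the second part. One must keep in mind that shrinking the index set—passing from ``all covers'' down to the single cover $\scrv$—\emph{enlarges} the group, so the intersection $\tspx$ sits inside each individual $\tspv$; this is precisely what permits the refinement hypothesis to be applied to $g$. It is worth noting that neither directedness nor cofinality of $\ox$ is used anywhere: the argument needs only that $\scrv$ occurs among the covers over which $\tspx$ is intersected.
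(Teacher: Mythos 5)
Your argument is correct and is precisely the route the paper intends: the paper gives no written proof, stating only that the proposition ``follows directly from previous observations,'' namely the cover-level inclusion $\pi^{Sp}(\mathscr{U},x_0)\subseteq \Pi^{Sp}(\mathscr{U},x_0)$ from Proposition \ref{tspanier1} intersected over all covers, plus the element-chase through the hypothesized refinement. Your added remark that the intersection $\Pi^{Sp}(X,x_0)$ sits inside each individual $\Pi^{Sp}(\mathscr{V},x_0)$ is exactly the one point worth being careful about, and you handle it correctly.
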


We apply Proposition \ref{tspanier2} to find two conditions (those in
Proposition \ref{pcintersections} and Theorem \ref{paracompact1})
guaranteeing the equality of the Spanier group and thick Spanier group.

\begin{proposition}
\label{pcintersections} If $\mathscr{V}$ is an open cover of $X$ such that $%
V\cap V^{\prime}$ is path connected (or empty) for every pair $V,V^{\prime}\in \mathscr{V}$, then $\Pi^{Sp}(\mathscr{V},x_0)=\pi^{Sp}(\mathscr{V},x_0)$. Consequently, if every open cover $\mathscr{U}$ of $X$ has an open refinement $\mathscr{V}$ with this property, then $\Pi^{Sp}(X,x_0)=\pi^{Sp}(X,x_0)$.
\end{proposition}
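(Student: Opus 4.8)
The plan is to prove the containment $\tsp(\scrv,x_0)\subseteq\spanier(\scrv,x_0)$ directly on generators; the reverse containment $\spanier(\scrv,x_0)\subseteq\tsp(\scrv,x_0)$ is already supplied by Proposition \ref{tspanier1}, so together they give the asserted equality. It then remains to deduce the global statement by invoking Proposition \ref{tspanier2}.

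First I would take an arbitrary generator $[\alpha][\gamma_1][\gamma_2][\reva]$ of $\tsp(\scrv,x_0)$, with $\gamma_1$ a path in some $V_1\in\scrv$ and $\gamma_2$ a path in some $V_2\in\scrv$. Writing $a=\alpha(1)=\gamma_1(0)$ and $b=\gamma_1(1)=\gamma_2(0)$, the matching conditions that make $\alpha\ast\gamma_1\ast\gamma_2\ast\reva$ a loop at $x_0$ force $\gamma_2(1)=a$, so that both $a$ and $b$ lie in $V_1\cap V_2$. This is exactly where the hypothesis enters: since $V_1\cap V_2$ contains $a,b$ it is nonempty, hence path connected, and so I may choose a path $\delta:[0,1]\to V_1\cap V_2$ from $b$ to $a$.

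The key step is the resulting factorization. Because $\delta\ast\overline{\delta}\simeq c_{b}$ rel endpoints, we have $[\gamma_1][\gamma_2]=[\gamma_1\ast\delta][\,\overline{\delta}\ast\gamma_2]$ in the fundamental groupoid. Now $\gamma_1\ast\delta$ is a loop based at $a$ with image in $V_1$ (since $\delta\subseteq V_1\cap V_2\subseteq V_1$), while $\overline{\delta}\ast\gamma_2$ is a loop based at $a$ with image in $V_2$. Inserting this and the trivial factor $[\reva][\alpha]$ at $a$ yields
\[
[\alpha][\gamma_1][\gamma_2][\reva]=\big([\alpha][\gamma_1\ast\delta][\reva]\big)\big([\alpha][\,\overline{\delta}\ast\gamma_2][\reva]\big),
\]
which exhibits the thick generator as a product of two ordinary Spanier generators of $\spanier(\scrv,x_0)$. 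Hence every generator of $\tsp(\scrv,x_0)$ lies in $\spanier(\scrv,x_0)$, establishing the first assertion.

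For the consequence, given any open cover $\scru$ of $X$, I would choose the refinement $\scrv$ provided by the hypothesis. By the first part $\tsp(\scrv,x_0)=\spanier(\scrv,x_0)$, and since $\scrv$ refines $\scru$ we have $\spanier(\scrv,x_0)\subseteq\spanier(\scru,x_0)$; hence $\tsp(\scrv,x_0)\subseteq\spanier(\scru,x_0)$. This verifies the hypothesis of Proposition \ref{tspanier2}, which then gives $\tsp(X,x_0)=\spanier(X,x_0)$. I do not expect a genuine obstacle here: the only point requiring care is the bookkeeping of endpoints ensuring that $\gamma_1\ast\delta$ and $\overline{\delta}\ast\gamma_2$ are genuine loops each contained in a single cover element, and this follows mechanically once $\delta$ is placed in $V_1\cap V_2$.
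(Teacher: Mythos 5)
Your proposal is correct and follows essentially the same route as the paper's proof: both use the path-connectedness of $V_1\cap V_2$ to choose a connecting path (your $\delta$ is the reverse of the paper's $\beta$) and split the thick generator into the product of the two ordinary Spanier generators $[\alpha][\gamma_1\ast\delta][\reva]$ and $[\alpha][\overline{\delta}\ast\gamma_2][\reva]$, then invoke Proposition \ref{tspanier2} for the second assertion. The endpoint bookkeeping you flag is handled exactly as in the paper.
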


\begin{proof}
Suppose $\scrv$ is as in the statement of the proposition. It suffices to show $\tspv\subseteq \spv$. Let $g=[\alpha][\gamma_1][\gamma_2][\reva]$ be a generator of $\tspv$ where $[\alpha]\in \tX$, and $\gamma_i:[0,1]\to V_i$ for $V_i\in \scrv$. Since, by assumption, $V_{1}\cap V_{2}$ is path connected and the points $x_1=\alpha(1)$ and $x_2=\gamma_{1}(1)=\gamma_{2}(0)$ both lie in $V_{1}\cap V_{2}$, there is a path $\beta:[0,1]\to  V_{1}\cap V_{2}$ from $x_1$ to $x_2$. Note $g_1=[\alpha][\gamma_1\ast\revb][\reva]$ and $g_2=[\alpha][\beta\ast\gamma_2][\reva]$ are generators of $\spv$. Since $g=g_1g_2$, it follows that $g\in \spv$. The second statement in the current proposition now follows from Proposition \ref{tspanier2}.
\end{proof}

\begin{example}
\label{starcover} \emph{
Let $K$ be a geometric simplicial complex and $v\in K$ be a vertex. It is well-known that the open star $St(v,K)$ is contractible (and thus path connected). Therefore $\mathscr{S}=\{St(v,K)|v\in K\}$ is an open cover of $K$ such that $\pi^{Sp}(\mathscr{S},v_{0})=1$.\\
\indent Moreover, if $St(v_{1},K)\cap St(v_{2},K)\neq \emptyset $, then the
open 1-simplex $(v_{1},v_{2})\subset St(v_{1},K)\cap St(v_{2},K),$ and hence
there is a canonical strong deformation from $St(v_{1},K)\cap St(v_{2},K)$ onto $(v_{1},v_{2})$. For a given point $p\in St(v_{1},K)\cap St(v_{2},K),$ fix the coefficients of $v_{1}$ and $v_{2}$ while linearly contracting to $0$ the remaining barycentric coordinates of $p$. Hence, since $(v_{1},v_{2})$ is contractible, $St(v_{1},K)\cap St(v_{2},K)$ is contractible and, in particular, path connected. Thus it follows from Proposition \ref{pcintersections} that $\Pi^{Sp}(\mathscr{S},v_{0})=\pi ^{Sp}(\mathscr{S},v_{0})=1$. }
\end{example}

It is known that $\pi^{Sp}(X,x_0)\subseteq \ker\Psi_{X}$ for any space $X$:
see Proposition 4.8 of \cite{FZ07}. Thus the Spanier group of $X$ vanishes
whenever $X$ is $\pi_1$-shape injective. We use Proposition \ref{pcintersections} to prove the analogous inclusion for the thick Spanier group; note the proof is similar to that in \cite{FZ07}.

\begin{proposition}
\label{inclusion1} For any space $X$, $\Pi^{Sp}(X,x_0)\subseteq \ker\Psi_{X}$.
\end{proposition}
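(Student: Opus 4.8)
The plan is to show that every generator of $\Pi^{Sp}(X,x_0)$ lies in $\ker\Psi_X$, and since $\ker\Psi_X$ is a subgroup, this suffices. Recall $\Psi_X([\alpha])=([p_{\scru}\circ\alpha])_{\scru}$, so an element $[g]\in\pi_1(X,x_0)$ lies in $\ker\Psi_X$ exactly when $p_{\scru\ast}([g])=1$ in $\pi_1(|\nerveu|,U_0)$ for every cover $(\scru,U_0)\in\Lambda$ (the cofinal directed set defining the shape limit). Because an inverse limit injects into the product, vanishing of every coordinate is equivalent to lying in the kernel. So the problem reduces to a uniform, cover-by-cover claim: for each $(\scru,U_0)\in\Lambda$ and each $[g]\in\Pi^{Sp}(X,x_0)$, we have $p_{\scru\ast}([g])=1$.

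The key idea is to use naturality of the thick Spanier construction together with Example \ref{starcover}. First I would fix a cover $(\scru,U_0)\in\Lambda$ and consider the canonical map $p_{\scru}:X\to|\nerveu|$. The crucial observation is that the open star cover $\mathscr{S}=\{St(U,\nerveu)\mid U\in\scru\}$ of the polyhedron $|\nerveu|$ pulls back, via $p_{\scru}$, to a cover of $X$ that refines $\scru$: indeed the defining property $p_{\scru}^{-1}(St(U,\nerveu))\subseteq U$ says exactly that $p_{\scru}^{-1}\mathscr{S}$ refines $\scru$. By Proposition \ref{tspanier1}, refinement gives $\Pi^{Sp}(p_{\scru}^{-1}\mathscr{S},x_0)\subseteq\Pi^{Sp}(\scru,x_0)$, but what I actually want runs the other direction through the intersection: since $\Pi^{Sp}(X,x_0)\subseteq\Pi^{Sp}(\scrw,x_0)$ for \emph{every} cover $\scrw$, in particular $[g]\in\Pi^{Sp}(p_{\scru}^{-1}\mathscr{S},x_0)$.

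Now I would invoke naturality (Remark \ref{naturalityofspanier}) applied to $f=p_{\scru}:X\to|\nerveu|$ with the cover $\mathscr{W}=\mathscr{S}$ of $Y=|\nerveu|$: this yields $p_{\scru\ast}\bigl(\Pi^{Sp}(p_{\scru}^{-1}\mathscr{S},x_0)\bigr)\subseteq\Pi^{Sp}(\mathscr{S},U_0)$. But Example \ref{starcover} computes $\Pi^{Sp}(\mathscr{S},U_0)=\pi^{Sp}(\mathscr{S},U_0)=1$, since star intersections in a simplicial complex are contractible. Chaining these, $p_{\scru\ast}([g])\in\Pi^{Sp}(\mathscr{S},U_0)=1$, so $p_{\scru\ast}([g])=1$ as desired. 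Since $\scru\in\Lambda$ was arbitrary and $\Lambda$ is cofinal, every coordinate of $\Psi_X([g])$ is trivial, giving $[g]\in\ker\Psi_X$.

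The main obstacle, and the step requiring care, is verifying that $p_{\scru}^{-1}\mathscr{S}$ genuinely refines $\scru$ and that the basepoint conditions match up, so that the naturality inclusion lands in the correctly-based group $\Pi^{Sp}(\mathscr{S},U_0)$; this rests on the canonical-map property $p_{\scru}^{-1}(St(U,\nerveu))\subseteq U$ and on $p_{\scru}(x_0)=U_0$. Everything else is bookkeeping: the reduction to individual covers uses only that $\check\pi_1$ is an inverse limit, and the computation $\Pi^{Sp}(\mathscr{S},U_0)=1$ is already supplied by Example \ref{starcover}. One subtlety worth noting is that $\mathscr{S}$ is a cover of the polyhedron $|\nerveu|$, not of $X$, so the contractibility argument of Example \ref{starcover} must be applied on the target; this is exactly why phrasing the argument through naturality, rather than directly on $X$, is the clean route.
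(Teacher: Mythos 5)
Your proposal is correct and follows essentially the same route as the paper: fix $(\scru,U_0)\in\Lambda$, note $[g]$ lies in the thick Spanier group of the pullback cover $\{p_{\scru}^{-1}(St(U,N(\scru)))\}$, push forward to $\Pi^{Sp}(\mathscr{S},U_0)$, and conclude via Example \ref{starcover}. The only cosmetic difference is that you invoke Remark \ref{naturalityofspanier} as a black box where the paper writes out the image of a generator explicitly.
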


\begin{proof}
Let $g\in \tspx$ and suppose $(\scru,U_0)\in\Lambda$. It suffices to show $g\in \ker p_{\scru\ast}$. Recall that for each $U\in \scru$, we have $p_{\scru}^{-1}(St(U,\nerveu))\subseteq U$. Therefore $\scrv=\{p_{\scru}^{-1}(St(U,\nerveu))|U\in \scru\}$ is an open cover of $X$ which refines $\scru$. Since $g\in\tspv$ by assumption, $g$ is a product of generators of the form $[\alpha][\gamma_1][\gamma_2][\reva]$ where $[\alpha]\in \tX$ and $\gamma_i:[0,1]\to p_{\scru}^{-1}(St(U_i,\nerveu))$ for $U_i\in \scru$, $i=1,2$. Note $p_{\scru}\circ\alpha$ is a path in $|\nerveu|$ based at $U_0$ with $p_{\scru}(\alpha(1))\in St(U_1,\nerveu)\cap St(U_2,\nerveu)$ and $p_{\scru}\circ\gamma_i:[0,1]\to St(U_i,\nerveu)$ for $i=1,2$. Thus if $\mathscr{S}=\{St(U,\nerveu)|U\in\scru\}$ is the cover of $|\nerveu|$ by open stars, then \[p_{\scru\ast}([\alpha][\gamma_1][\gamma_2][\reva])=[p_{\scru}\circ\alpha] [p_{\scru}\circ\gamma_1][p_{\scru}\circ\gamma_2][\overline{p_{\scru}\circ\alpha}]\]is a generator of $\tsp(\mathscr{S},U_0)$. But it is observed in Example \ref{starcover} that $\tsp(\mathscr{S},U_0)=1$. Since generators multiplying to $g$ lie in the subgroup $\ker p_{\scru\ast}$ of $\pionex$, so does $g$.
\end{proof}

In the non-locally path connected case, the inclusion $\Pi^{Sp}(X,x_0)%
\subseteq \ker\Psi_{X}$ need not be equality; a counterexample is given later in Example \ref{example1}.

The following theorem calls upon the theory of paracompact spaces. Recall if 
$\mathscr{U}$ is an open cover of $X$ and $x\in X$, the \textit{star of} $x$ 
\textit{with respect to} $\mathscr{U}$ is the union $St(x,\mathscr{U}%
)=\bigcup\{U\in \mathscr{U}|x\in U\}$. A \textit{barycentric refinement} of $%
\mathscr{U}$ is an open refinement $\mathscr{V}$ of $\mathscr{U}$ such that
for each $x\in X$, there is a $U\in \mathscr{U}$ with $St(x,\mathscr{V}%
)\subseteq U$. It is known that a $T_1$ space is paracompact if and only if
every open cover has an open barycentric refinement \cite[5.1.12]{Eng89}.

\begin{theorem}
\label{paracompact1} If $X$ is $T_1$ and paracompact, then $%
\Pi^{Sp}(X,x_0)=\pi^{Sp}(X,x_0)$.
\end{theorem}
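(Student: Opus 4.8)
The plan is to apply Proposition~\ref{tspanier2}, so the goal reduces to showing that every open cover $\scru$ of $X$ admits a refinement $\scrv$ with $\tspv\subseteq \spu$. The key idea is that the hypotheses of Proposition~\ref{pcintersections} are stronger than strictly necessary: there we needed \emph{pairwise} path-connected intersections to collapse a thick generator into ordinary ones, but to control $\tspu$ it suffices to control the intersections of sets relative to a coarser cover. The appropriate tool from paracompactness is the barycentric refinement described just before the theorem. First I would take a barycentric refinement $\scrv$ of $\scru$, so that for each $x\in X$ there is some $U\in\scru$ with $St(x,\scrv)\subseteq U$.

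Next I would examine a generator $g=[\alpha][\gamma_1][\gamma_2][\reva]$ of $\tspv$, where $\gamma_i:[0,1]\to V_i$ for $V_1,V_2\in\scrv$ and the common endpoint $x=\gamma_1(1)=\gamma_2(0)=\alpha(1)$ lies in $V_1\cap V_2$. The crucial observation is that since $x\in V_1$ and $x\in V_2$, both $V_1$ and $V_2$ are contained in $St(x,\scrv)$, which the barycentric condition places inside a single element $U\in\scru$. Hence both paths $\gamma_1,\gamma_2$ have image in the one set $U$, so the concatenation $\gamma_1\ast\gamma_2$ is a loop in $U$ based at $x=\alpha(1)$. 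Therefore $g=[\alpha][\gamma_1\ast\gamma_2][\reva]$ is precisely a generator of $\spu$ in the sense of Definition~\ref{spaniergroupdef}. This shows $\tspv\subseteq\spu$, and Proposition~\ref{tspanier2} then yields $\tspx=\spx$, since the reverse inclusion $\spx\subseteq\tspx$ is Proposition~\ref{spanierinclusion}.

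The step I expect to require the most care is verifying that the endpoint $x$ genuinely lies in both $V_1$ and $V_2$, so that invoking $St(x,\scrv)$ is legitimate. By the definition of a thick Spanier generator the three paths share the basepoint-transport structure $\alpha(1)=\gamma_1(0)$ and $\gamma_1(1)=\gamma_2(0)$; I must check that the relevant common point around which the small loop is formed is indeed a point of $V_1\cap V_2$, and that $St(x,\scrv)$ is taken at the correct vertex. Once the point witnessing membership in both $V_i$ is pinned down, the containment $V_1\cup V_2\subseteq St(x,\scrv)\subseteq U$ is immediate and the rest is formal. The remaining routine point is that $\scrv$, being an open refinement furnished by the $T_1$ paracompact barycentric refinement criterion, is an honest element of $\ox$, so the hypothesis of Proposition~\ref{tspanier2} is met for \emph{every} $\scru$.
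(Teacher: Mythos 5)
Your proposal is correct and follows essentially the same route as the paper: take a barycentric refinement $\scrv$ of $\scru$, observe that a point common to $V_1$ and $V_2$ forces $V_1\cup V_2\subseteq St(x,\scrv)\subseteq U$ for some $U\in\scru$, and rewrite the thick generator as the ordinary Spanier generator $[\alpha][\gamma_1\ast\gamma_2][\reva]$. The only slip is the identification $x=\gamma_1(1)=\gamma_2(0)=\alpha(1)$, which need not hold; the correct witness is $\alpha(1)=\gamma_1(0)=\gamma_2(1)\in V_1\cap V_2$ (the loop's midpoint $\gamma_1(1)=\gamma_2(0)$ would also serve), and with that repair the argument is exactly the paper's.
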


\begin{proof}
We apply the second statement of Proposition \ref{tspanier2}. Given an open cover $\scru$ of $X$ take $\scrv$ to be a barycentric refinement of $\scru$. Suppose $g=[\alpha][\gamma_1][\gamma_2][\reva]$ is a generator of $\tspv$ where $[\alpha]\in \tX$ and $\gamma_i:[0,1]\to V_i$ for $V_i\in \scrv$, $i=1,2$. Since $\alpha(1)\in V_1\cap V_2$ and $\scrv$ is chosen to be a barycentric refinement, we have $V_1\cup V_2\subseteq St(\alpha(1),\scrv)\subseteq U$ for some $U\in \scru$. Note $\gamma_{1}\ast\gamma_{2}$ is a loop in $U$ based at $\alpha(1)$ and $g=[\alpha][\gamma_{1}\ast\gamma_{2}][\reva]$. Thus $g\in \spu$ and the inclusion $\tspv\subseteq \spu$ follows.
\end{proof}

\begin{corollary}
If $X$ is metrizable, then $\Pi^{Sp}(X,x_0)=\pi^{Sp}(X,x_0)$.
\end{corollary}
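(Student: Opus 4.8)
The plan is to reduce the statement directly to Theorem \ref{paracompact1}, which already establishes the equality $\Pi^{Sp}(X,x_0)=\pi^{Sp}(X,x_0)$ for every $T_1$ paracompact space. Thus it suffices to check that a metrizable space satisfies both hypotheses of that theorem. The $T_1$ (indeed Hausdorff) condition is immediate: if $d$ is a compatible metric and $x\neq y$, then the open balls of radius $d(x,y)/2$ about $x$ and $y$ are disjoint, so distinct points are separated by disjoint open sets.

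The substantive input is that every metrizable space is paracompact. This is the celebrated theorem of A.\,H.~Stone, and I would invoke it by citation rather than reprove it here. Once metrizability is seen to supply both paracompactness and the $T_1$ separation axiom, the hypotheses of Theorem \ref{paracompact1} are fully met, and the desired equality follows at once.

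There is essentially no obstacle in the argument itself; the corollary is a repackaging of Theorem \ref{paracompact1} together with a standard fact from general topology. The only ingredient external to the paper's own development is Stone's paracompactness theorem, so the ``hard part''---such as it is---has already been absorbed into the proof of Theorem \ref{paracompact1}, where the barycentric refinement property of paracompact $T_1$ spaces does the real work.
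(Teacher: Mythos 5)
Your proposal is correct and matches the paper's (implicit) argument exactly: the corollary is stated without proof precisely because it follows at once from Theorem \ref{paracompact1} together with A.~H.~Stone's theorem that every metrizable space is paracompact (and the trivial observation that metrizable spaces are $T_1$). Nothing further is needed.
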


\section{Thick Spanier groups and $\mathscr{U}$-homotopy}

\label{sectionuhomotopy} While the definition of the thick Spanier group $%
\Pi^{Sp}(\mathscr{U},x_0)$ in terms of generators is often convenient, it is desirable to characterize generic elements. We do this by replacing
homotopies as continuous deformations of paths with finite step homotopies
through open covers. This approach is closely related to that of Dugundji in 
\cite{Dug50} and differs from the notions of $\mathscr{U}$-homotopy in \cite%
{Kennison89} and \cite{MS82}.

Fix an open cover $\mathscr{U}$ of $X$ and define a relation $\sim_{%
\mathscr{U}}$ on paths in $X$. If $\alpha,\beta:[0,1]\to X$ are paths in $X$%
, let $\alpha\sim_{\mathscr{U}}\beta$ when $\alpha(0)=\beta(0)$, $%
\alpha(1)=\beta(1)$, and there are partitions $0=a_0<a_1<\dots < a_n=1$ and $%
0=b_0<b_1<\dots < b_n=1$ and a sequence $U_1,...,U_n\in \mathscr{U}$ of open
neighborhoods such that $\alpha([a_{i-1},a_{i}])\subseteq U_i$ and $%
\beta([b_{i-1},b_{i}])\subseteq U_i$ for $i=1,...,n$. Note if $\mathscr{V}$ is an open refinement of $\mathscr{U}$, then $\alpha\sim_{%
\mathscr{V}}\beta$ $\Rightarrow$ $\alpha\sim_{\mathscr{U}}\beta$.

\begin{definition}
\emph{\ Two paths $\alpha$ and $\beta$ in $X$ are said to be $\mathscr{U}$%
\textit{-homotopic} if $\alpha(0)=\beta(0)$, $\alpha(1)=\beta(1)$, and there
is a finite sequence $\alpha=\gamma_0,\gamma_1,...,\gamma_{n}=\beta$ such
that $\gamma_{j-1}\sim_{\mathscr{U}}\gamma_{j}$ for each $j=1,2,...,n$. The
sequence $\gamma_0,\gamma_1,...,\gamma_{n}$ is called a $\mathscr{U}$\textit{%
-homotopy from }$\alpha$ \textit{to} $\beta$. If a loop $\alpha$ is $%
\mathscr{U}$-homotopic to the constant path, we say it is \textit{null-}$%
\mathscr{U}$\textit{-homotopic}. }
\end{definition}

Note $\mathscr{U}$-homotopy defines an equivalence relation $\simeq_{%
\mathscr{U}}$ on the set of paths in $X$.

\begin{lemma}
\label{homotopicimpliesuhomotopic} If two paths are homotopic (rel.
endpoints), then they are $\mathscr{U}$-homotopic.
\end{lemma}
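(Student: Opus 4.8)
The plan is to discretize a given rel-endpoint homotopy using compactness of the square and the Lebesgue number lemma, turning the continuous deformation into a finite chain of $\simu$-related paths. First I would fix a homotopy $H\colon[0,1]\times[0,1]\to X$ rel endpoints with $H(\cdot,0)=\alpha$, $H(\cdot,1)=\beta$, so that $H(0,t)=\alpha(0)=\beta(0)$ and $H(1,t)=\alpha(1)=\beta(1)$ for all $t$. The family $\{H^{-1}(U)\mid U\in\scru\}$ is an open cover of the compact metric space $[0,1]^2$, so the Lebesgue number lemma supplies a $\delta>0$ such that every subset of $[0,1]^2$ of diameter less than $\delta$ is carried by $H$ into a single element of $\scru$. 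I then choose partitions $0=s_0<s_1<\dots<s_k=1$ (in the path-parameter direction) and $0=t_0<t_1<\dots<t_m=1$ (in the homotopy direction) fine enough that every closed grid rectangle $R_{i,j}=[s_{i-1},s_i]\times[t_{j-1},t_j]$ has diameter less than $\delta$, obtaining for each pair $(i,j)$ an element $U_{i,j}\in\scru$ with $H(R_{i,j})\subseteq U_{i,j}$.

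For the main argument I would take the intermediate paths to be the horizontal slices $\alpha_j=H(\cdot,t_j)$, so that $\alpha_0=\alpha$ and $\alpha_m=\beta$. The crucial observation is that two consecutive slices $\alpha_{j-1}$ and $\alpha_j$ are directly $\simu$-related using one and the same partition: working in the strip $[0,1]\times[t_{j-1},t_j]$, both the bottom edge $\alpha_{j-1}\!\mid_{[s_{i-1},s_i]}$ and the top edge $\alpha_j\!\mid_{[s_{i-1},s_i]}$ are contained in $H(R_{i,j})\subseteq U_{i,j}$. Hence the common partition $s_0<\dots<s_k$ together with the common sequence $U_{1,j},\dots,U_{k,j}\in\scru$ witnesses $\alpha_{j-1}\simu\alpha_j$, where the endpoint requirements hold since $\alpha_{j-1}(0)=\alpha_j(0)=\alpha(0)$ and $\alpha_{j-1}(1)=\alpha_j(1)=\alpha(1)$ by rel-endpoint-ness of $H$. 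The resulting finite sequence $\alpha=\alpha_0,\alpha_1,\dots,\alpha_m=\beta$ is then a $\scru$-homotopy, so $\alpha\simequ\beta$.

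I do not expect a genuine obstacle here, since the quantitative content is entirely absorbed by the Lebesgue number lemma; the step that deserves care is precisely the one that makes the argument clean. Passing between adjacent horizontal slices does not need the full symmetric freedom of $\simu$ (which allows the two paths to carry different partitions), because the rectangles in a single grid row share their vertical edges, so the bottom and top of each rectangle land in the same open set and a single partition serves both slices simultaneously. This is exactly why splitting the homotopy one grid row at a time succeeds, and it is the observation I would emphasize in the write-up.
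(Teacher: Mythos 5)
Your proposal is correct and takes essentially the same approach as the paper: subdivide the homotopy square finely enough (via compactness/Lebesgue number) that each grid rectangle maps into a single element of $\mathscr{U}$, then chain together the horizontal slices, with consecutive slices $\sim_{\mathscr{U}}$-related by the common partition and the open sets of the grid row between them. The paper simply uses a uniform $n\times n$ grid rather than an arbitrary pair of partitions; the argument is otherwise identical.
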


\begin{proof}
Suppose $\alpha,\beta$ are paths in $X$ with $\alpha(0)=\beta(0)$ and $\alpha(1)=\beta(1)$ and $H:[0,1]\times [0,1]\to X$ is a homotopy such that $H(s,0)=\alpha(s)$, $H(s,1)=\beta(s)$, $H(0,t)=\alpha(0)$, and $H(1,t)=\beta(1)$ for all $s,t\in [0,1]$. Find an integer $n\geq 1$ such that for each square $S_{i,j}=\left[\frac{i-1}{n},\frac{i}{n}\right]\times \left[\frac{j-1}{n},\frac{j}{n}\right]$, $i,j\in \{1,2,...,n\}$, $H(S_{i,j})\subseteq U_{i,j}$ for some $U_{i,j}\in \scru$. Let $\gamma_j$ be the path which is the restriction of $H$ to $[0,1]\times \{j/n\}$. We have $\alpha=\gamma_0$, $\beta=\gamma_n$, and $\gamma_{j-1}\simu\gamma_{j}$ for each $j=1,2,...,n$ and thus $\alpha\simequ\beta$.
\end{proof}
Observe that $\mathscr{U}$-homotopy respects inversion and concatenation of paths in the sense that if $\alpha \simeq_{\mathscr{U}}\beta$ and $\alpha ^{\prime
}\simeq_{\mathscr{U}}\beta ^{\prime }$ then $\reva \simeq_{\mathscr{U}%
}\revb$ and $\alpha\ast\alpha ^{\prime}\simeq_{\mathscr{U}}\beta\ast
\beta ^{\prime}$ when the concatenations are defined.

Since inessential loops are null-$\mathscr{U}$-homotopic by Lemma \ref%
{homotopicimpliesuhomotopic}, the set $\nu(\mathscr{U},x_0)$ of null-$%
\mathscr{U}$-homotopic loops is a subgroup of $\pi_{1}(X,x_0)$. In fact, $%
\nu(\mathscr{U},x_0)$ is normal in $\pi_{1}(X,x_0)$ since, if $\alpha$ is
null-$\mathscr{U}$-homotopic and $[\beta]\in \pi_{1}(X,x_0)$, then 
\begin{equation*}
\beta\ast\alpha\ast \revb\simeq_{\mathscr{U}}\beta\ast c_{x_0} \ast
\revb \simeq_{\mathscr{U}} c_{x_0}.
\end{equation*}
We now compare $\Pi^{Sp}(\mathscr{U},x_0)$ and $\nu(\mathscr{U},x_0)$ as
subgroups of $\pi_{1}(X,x_0)$.

\begin{lemma}
\label{uhomotopy} Suppose $\mathscr{U}$ is an open cover of $X$ consisting
of path connected sets. If $\alpha$ and $\beta$ are $\mathscr{U}$-homotopic
paths with $\alpha(0)=x_0=\beta(0)$, then $[\alpha\ast\revb]\in
\Pi^{Sp}(\mathscr{U},x_0)$.
\end{lemma}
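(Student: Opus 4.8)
The plan is to reduce the statement to the elementary relation $\simu$ and then manufacture an explicit product of generators. Since $\alpha\simequ\beta$ means there is a chain $\alpha=\gamma_0\simu\gamma_1\simu\cdots\simu\gamma_m=\beta$ in which every $\gamma_j$ shares the two endpoints $x_0$ and $\alpha(1)=\beta(1)$, each $\gamma_{j-1}\ast\overline{\gamma_j}$ is a loop based at $x_0$ and the classes telescope:
\[
[\alpha\ast\overline{\beta}]=\prod_{j=1}^{m}[\gamma_{j-1}\ast\overline{\gamma_j}].
\]
Because $\Pi^{Sp}(\scru,x_0)$ is a subgroup (Proposition \ref{tspanier1}), it therefore suffices to prove the lemma when $\alpha\simu\beta$ is a single step.

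So assume $\alpha\simu\beta$ witnessed by partitions $0=a_0<\cdots<a_n=1$, $0=b_0<\cdots<b_n=1$ and sets $U_1,\dots,U_n\in\scru$ with $\alpha([a_{i-1},a_i])\subseteq U_i$ and $\beta([b_{i-1},b_i])\subseteq U_i$. Write $\alpha_i,\beta_i$ for the reparametrized restrictions, so $\alpha\simeq\alpha_1\ast\cdots\ast\alpha_n$ and $\beta\simeq\beta_1\ast\cdots\ast\beta_n$ rel endpoints, with $\alpha_i,\beta_i$ having image in $U_i$. The two paths now run in parallel through the same sequence of cover elements, and I compare them rung by rung. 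The points $\alpha(a_i)$ and $\beta(b_i)$ both lie in $U_i$, so \emph{using path-connectedness of the single set $U_i$} I choose a rung $\delta_i:[0,1]\to U_i$ from $\alpha(a_i)$ to $\beta(b_i)$ for $1\le i\le n-1$, and set $\delta_0=c_{x_0}$, $\delta_n=c_{\alpha(1)}$.

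Writing $\sigma_i=\alpha_1\ast\cdots\ast\alpha_i$ and $\tau_i=\beta_1\ast\cdots\ast\beta_i$ for the prefix paths (with $\sigma_0=\tau_0=c_{x_0}$), consider the based loops $h_i=\sigma_i\ast\delta_i\ast\overline{\tau_i}$. Then $h_0\simeq c_{x_0}$ and $h_n\simeq\alpha\ast\overline{\beta}$, so $[\alpha\ast\overline{\beta}]=[h_n]$ is the telescoping product of the consecutive differences $[h_i][h_{i-1}]^{-1}$ for $i=n,n-1,\dots,1$. A direct simplification, using $[\overline{\tau_i}][\tau_{i-1}]=[\overline{\beta_i}]$ and $[\sigma_i]=[\sigma_{i-1}][\alpha_i]$, gives
\[
[h_i][h_{i-1}]^{-1}=[\sigma_{i-1}]\,[\alpha_i\ast\delta_i\ast\overline{\beta_i}]\,[\overline{\delta_{i-1}}]\,[\overline{\sigma_{i-1}}].
\]
This is exactly a generator $[\delta][\gamma_1][\gamma_2][\overline{\delta}]$ of $\Pi^{Sp}(\scru,x_0)$: the path $\delta=\sigma_{i-1}$ starts at $x_0$, the factor $\gamma_1=\alpha_i\ast\delta_i\ast\overline{\beta_i}$ has image in $U_i$, and $\gamma_2=\overline{\delta_{i-1}}$ has image in $U_{i-1}$ (in $U_1$ when $i=1$). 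Hence each difference lies in $\Pi^{Sp}(\scru,x_0)$, and so does their product.

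The crux, and the reason the conclusion concerns the \emph{thick} group rather than $\pi^{Sp}(\scru,x_0)$, is the placement of the rungs. Each $\delta_i$ is used twice, once grouped with $\alpha_i,\beta_i\subseteq U_i$ inside $h_i$ and once standing alone in $h_{i+1}$; keeping $\gamma_1$ inside a \emph{single} cover element then forces $\gamma_2=\overline{\delta_{i-1}}$ to live in the neighboring element $U_{i-1}$, so the generator genuinely involves two distinct members of $\scru$. One cannot in general push $\delta_i$ into $U_{i-1}\cap U_i$, since that intersection need not be path connected — which is precisely the extra hypothesis that would let the same computation land in $\pi^{Sp}(\scru,x_0)$, cf. Proposition \ref{pcintersections}. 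The rest is routine: confirming the reparametrization homotopies, matching concatenation endpoints, and verifying the telescoping identity, where I expect no difficulty beyond care with orientations.
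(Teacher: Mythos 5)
Your argument is correct. The outer structure coincides with the paper's: both reduce to a single step $\alpha\sim_{\mathscr{U}}\beta$ via the telescoping product $[\alpha\ast\overline{\beta}]=\prod_j[\gamma_{j-1}\ast\overline{\gamma_j}]$, and both then build rungs between $\alpha(a_i)$ and $\beta(b_i)$ using path connectedness of the cover elements. Where you genuinely differ is in the ladder decomposition for the single-step case. The paper chooses \emph{two} rungs at each interior node --- a path $\epsilon_i$ inside $U_i$ and a second path $\delta_i$ inside $U_{i+1}$ joining the same pair of points --- and factors $[\alpha\ast\overline{\beta}]$ as an alternating product $g_1h_1g_2h_2\cdots g_n$ of $n$ ordinary Spanier generators $g_i$ (loops $\overline{\delta_{i-1}}\ast\alpha_i\ast\epsilon_i\ast\overline{\beta_i}$ contained in the single set $U_i$) and $n-1$ thick generators $h_i=[\beta_1\ast\cdots\ast\beta_i][\overline{\epsilon_i}][\delta_i][\cdots]^{-1}$, which isolates the "thickness" entirely in the small loops $\epsilon_i\ast\overline{\delta_i}\subseteq U_i\cup U_{i+1}$. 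You instead take a \emph{single} rung $\delta_i\subseteq U_i$ and obtain a shorter product of $n$ generators, each of which is thick, with $\gamma_1=\alpha_i\ast\delta_i\ast\overline{\beta_i}\subseteq U_i$ and $\gamma_2=\overline{\delta_{i-1}}\subseteq U_{i-1}$ (your conjugating paths are prefixes of $\alpha$ rather than of $\beta$, which is immaterial). Your version is leaner and the telescoping identity $[h_i][h_{i-1}]^{-1}$ makes the verification essentially mechanical; the paper's version has the mild advantage of displaying exactly which part of the factorization would collapse into $\pi^{Sp}(\mathscr{U},x_0)$ under extra hypotheses, a point you recover in your closing remark (though note the obstruction to making your $i$-th generator ordinary is the placement of $\delta_{i-1}$ in $U_{i-1}\cap U_i$, not of $\delta_i$). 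Both proofs are complete and correct.
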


\begin{proof}
First, consider the case when $\alpha\simu\beta$. Take open neighborhoods $U_1,...,U_n\in \scru$ and partitions $0=a_0<a_1<\dots < a_n=1$ and $0=b_0<b_1<\dots < b_n=1$ such that $\alpha([a_{i-1},a_{i}])\subseteq U_i$ and $\beta([b_{i-1},b_{i}])\subseteq U_i$. Since $U_i$ is path connected, there are paths $\epsilon_i:[0,1]\to U_i$ from $\alpha(a_i)$ to $\beta(b_i)$ for $i=1,...,n-1$. Additionally, there are paths $\delta_i:[0,1]\to U_{i+1}$ from $\alpha(a_i)$ to $\beta(b_i)$ for $i=1,...,n-1$. Let $\delta_0$ be the constant path at $x_0$ and $\epsilon_n$ be the constant path at $\alpha(1)=\beta(1)$. Observe $\epsilon_i\ast \overline{\delta_{i}}$ is a loop in $U_{i}\cup U_{i+1}$ based at $\alpha(a_i)$ for $i=1,...,n-1$.

Let $\alpha_i$ and $\beta_i$ be the paths given by restricting $\alpha$ and $\beta$ to $[a_{i-1},a_{i}]$ and $[b_{i-1},b_{i}]$ respectively. Note the following paths $g_i$ are generators of $\spu$:
\begin{enumerate}
\item $\displaystyle g_1=[c_{x_0}][\alpha_1\ast\epsilon_1\ast\overline{\beta_{1}}][\overline{c_{x_0}}]$
\item $\displaystyle g_i=[\beta_1\ast \beta_2\ast\dots\ast\beta_{i-1}] [\overline{\delta_{i-1}}\ast\alpha_{i}\ast \epsilon_{i}\ast \overline{\beta_{i}}] [\overline{\beta_1\ast \beta_2\ast\dots\ast\beta_{i-1}}]$ for $i=2,...,n-1$
\item $\displaystyle g_n=[\beta_1\ast \dots\ast\beta_{n-1}][\overline{\delta_{n-1}}\ast \alpha_n \ast \overline{\beta_{n}}][\overline{\beta_1\ast \dots\ast\beta_{n-1}}]$
\end{enumerate}
Since $\spu\subseteq \tspu$, we have $g_i\in \tspu$ for $i=1,...,n$. Additionally, for $i=1,...,n-1$, \[h_i=[\beta_1\ast \dots \ast \beta_i][\overline{\epsilon_{i}}][\delta_i][\overline{\beta_1\ast \dots \ast \beta_i}]\]is a generator of $\tspu$. Note \[[\alpha\ast\revb]=[\alpha_1][\alpha_2]\dots [\alpha_n][\overline{\beta_{n}}]\dots [\overline{\beta_{2}}][\overline{\beta_{1}}]= g_1h_1g_2h_2\dots g_{n-1}h_{n-1}g_n\]and thus $[\alpha\ast\revb]\in \tspu$.

For the general case, suppose there is a $\scru$-homotopy $\alpha=\gamma_0,\gamma_1,...,\gamma_{n}=\beta$ where $\gamma_{j-1}\simu\gamma_{j}$ for each $j=1,2,...,n$. The first case gives $[\gamma_{i-1}\ast\overline{\gamma_{i}}]\in \tspu$ for each $i=1,...,n$ and thus \[[\alpha\ast\revb]=[\gamma_{0}\ast\overline{\gamma_{1}}] [\gamma_{1}\ast\overline{\gamma_{2}}]\dots [\gamma_{n-1}\ast\overline{\gamma_{n}}]\in \tspu.\]
\end{proof}

\begin{theorem}
\label{characterizethickspangrp} For every open cover $\mathscr{U}$, $%
\Pi^{Sp}(\mathscr{U},x_0)\subseteq \nu(\mathscr{U},x_0)$. Moreover, if $%
\mathscr{U}$ consists of path connected sets, then $\Pi^{Sp}(\mathscr{U}%
,x_0)=\nu(\mathscr{U},x_0)$.
\end{theorem}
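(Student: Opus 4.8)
The plan is to prove the two statements in turn, leaning on the equivalence-relation machinery already established. The universal inclusion $\tspu\subseteq\nulhu$ will follow once I check that each generator $[\alpha][\gamma_1][\gamma_2][\reva]$ of $\tspu$ is null-$\scru$-homotopic, since $\nulhu$ is a subgroup of $\pionex$. The central observation is that the common point $x=\alpha(1)=\gamma_1(0)=\gamma_2(1)$ lies in $U_1\cap U_2$: indeed $x=\gamma_1(0)\in U_1$ and $x=\gamma_2(1)\in U_2$. Hence the loop $\gamma_1\ast\gamma_2$, partitioned at its midpoint so that the first half lies in $U_1$ and the second in $U_2$, is single-step $\scru$-equivalent to the constant path $c_{x}$: the constant path is subordinated to the same pair $U_1,U_2$ via a matching two-piece partition precisely because $x\in U_1\cap U_2$. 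This gives $\gamma_1\ast\gamma_2\simu c_{x}$.

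From here I would invoke the fact, recorded just after Lemma \ref{homotopicimpliesuhomotopic}, that $\scru$-homotopy respects concatenation, to obtain $\alpha\ast\gamma_1\ast\gamma_2\ast\reva\simequ \alpha\ast c_{x}\ast\reva$. The right-hand path is homotopic rel.\ endpoints to $c_{x_0}$, so Lemma \ref{homotopicimpliesuhomotopic} gives $\alpha\ast c_{x}\ast\reva\simequ c_{x_0}$, and transitivity of $\simequ$ shows the generator is null-$\scru$-homotopic. This establishes $\tspu\subseteq\nulhu$ for every open cover $\scru$, with no path-connectedness hypothesis required.

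For the reverse inclusion under the path-connectedness hypothesis, I would simply apply Lemma \ref{uhomotopy}. Given $[\lambda]\in\nulhu$, the loop $\lambda$ is $\scru$-homotopic to $c_{x_0}$, and both are paths based at $x_0$; since $\scru$ consists of path connected sets, Lemma \ref{uhomotopy} yields $[\lambda\ast\overline{c_{x_0}}]\in\tspu$. Because $\lambda\ast\overline{c_{x_0}}$ is homotopic rel.\ endpoints to $\lambda$, this reads $[\lambda]\in\tspu$, so $\nulhu\subseteq\tspu$ and equality follows from the first part.

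The only genuine content is the first paragraph's single-step equivalence $\gamma_1\ast\gamma_2\simu c_x$, which is exactly where the defining feature of the \emph{thick} Spanier group is used: the two path-pieces are permitted to live in distinct cover elements sharing the point $x$, and it is the membership $x\in U_1\cap U_2$ that lets the constant path be matched against $\gamma_1\ast\gamma_2$ over the same sequence of cover elements. I do not anticipate a real obstacle, since the harder direction is supplied almost verbatim by Lemma \ref{uhomotopy} and the remainder is bookkeeping with the already-proven lemmas.
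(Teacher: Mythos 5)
Your proof is correct and follows essentially the same route as the paper: both hinge on the observation that $x=\alpha(1)=\gamma_1(0)=\gamma_2(1)$ lies in $U_1\cap U_2$, so the generator is $\scru$-homotopic to an inessential loop (finished via Lemma \ref{homotopicimpliesuhomotopic}), and both obtain the reverse inclusion by applying Lemma \ref{uhomotopy} to $\lambda\ast\overline{c_{x_0}}$. Your execution of the first inclusion is slightly more economical than the paper's, which builds explicit matched partitions of the full loops $\alpha\ast\gamma_1\ast\gamma_2\ast\reva$ and $\alpha\ast\reva$, whereas you isolate the single-step equivalence $\gamma_1\ast\gamma_2\simu c_{x}$ and let the recorded compatibility of $\simequ$ with concatenation handle the rest.
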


\begin{proof}
Since $\nulhu$ is a subgroup of $\pionex$, the inclusion  $\tspu\subseteq \nulhu$ follows from showing $\nulhu$ contains the generators of $\tspu$. Suppose $[\beta]=[\alpha][\gamma_1][\gamma_2][\reva]$ is such a generator where $\gamma_i$ has image in $U_i\in \scru$. We claim if $\beta=\alpha\ast\gamma_1\ast \gamma_2\ast \reva$, then $\beta\simu \alpha\ast\reva$.

Find a sequence of neighborhoods $V_1,...,V_n\in \scru$ and a partition $0=s_0<s_1<\dots <s_n=1$ such that $\alpha([s_{j-1},s_{j}])\subseteq V_j$. Since $\alpha(1)\in U_1\cap U_2$ we may assume $\alpha([s_{n-1},1])\subseteq U_1\cap U_2$.

For $c>0$, let $L_c:[0,1]\to [0,c]$ be the order-preserving, linear homeomorphism. Define a partition $0=t_0<t_1<\dots <t_{2n}=1$ by $t_k=L_{1/2}(s_k)$ if $0\leq k\leq n-1$, $t_n=1/2$, and $t_k=1-L_{1/2}(s_{2n-k})$ for $n+1\leq k\leq 2n$. Rewrite the sequence $V_1,...,V_{n-1},U_1,U_2,V_{n-1},...,V_1$ in $\scru$ as $W_1,...,W_{2n}$. Note $\alpha\ast\reva([t_{k-1},t_k])\subseteq W_k$ for $k=1,...,2n$.

Define a partition $0=r_0<r_1<\dots <r_{2n}=1$ by $r_k=L_{1/4}(s_k)$ if $0\leq k\leq n-1$, $r_n=1/2$, and $r_k=1-L_{1/4}(s_{2n-k})$ for $n+1\leq k\leq 2n$. Note $\beta([r_{k-1},r_k])\subseteq W_k$ for $k=1,...,2n$ and thus $\beta\simu\alpha\ast\reva$. Since $\alpha\ast\reva$ is an inessential loop, $\beta$ is null-$\scru$-homotopic by Lemma \ref{homotopicimpliesuhomotopic}.

In the case that $\scru$ consists of path connected sets, note that if $\alpha$ is a null-$\scru$-homotopic loop based at $x_0$, then $[\alpha]=[\alpha\ast \overline{c_{x_0}}]\in \tspu$ by Lemma \ref{uhomotopy}. This gives the inclusion $\nu(\mathscr{U},x_0)\subseteq \Pi^{Sp}(\mathscr{U},x_0)$.
\end{proof}

\section{A \v{C}ech-Spanier short exact sequence}

\label{mainsection} The following theorem, which identifies the kernel of
the homomorphism $p_{\mathscr{U}\ast}:\pi_{1}(X,x_0)\to \pi_{1}(|N(%
\mathscr{U})|,U_0)$ for certain covers $\mathscr{U}$, is the main technical
achievement of this paper.

\begin{theorem}
\label{shortexactsequence} Suppose $(\mathscr{U},U_0)\in \Lambda$ is such that $\mathscr{U}$ consists of path connected sets and $p_{\mathscr{U}}:X\to |N(\mathscr{U})|$ is a canonical map. The sequence 
\begin{equation*}
\xymatrix{ 1 \ar[r] & \tspu \ar[r]^-{i_{\scru}} & \pionex
\ar[r]^-{p_{\scru\ast}} & \pi_{1}(|\nerveu|,U_0) \ar[r] & 1}
\end{equation*}
where $i_{\mathscr{U}}$ is inclusion is exact.
\end{theorem}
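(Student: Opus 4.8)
The plan is to establish the three requirements for exactness: injectivity of $i_{\scru}$, which is immediate since it is an inclusion; the middle equality $\ker p_{\scru\ast}=\tspu$; and surjectivity of $p_{\scru\ast}$. Throughout I would exploit Theorem \ref{characterizethickspangrp}, which (as $\scru$ consists of path connected sets) identifies $\tspu$ with the normal subgroup $\nulhu$ of null-$\scru$-homotopic loops, so that the middle equality becomes the statement that a loop $\alpha$ at $x_0$ is null-$\scru$-homotopic if and only if $p_{\scru}\circ\alpha$ is null-homotopic in $|\nerveu|$. The essential geometric input is that each closed star $\overline{St}(U,\nerveu)$ is contractible, together with the observation that $\alpha([a,b])\subseteq U$ forces $p_{\scru}(\alpha([a,b]))\subseteq \overline{St}(U,\nerveu)$: the vertices in the carrier of each $p_{\scru}(\alpha(t))$ are members of $\scru$ containing $\alpha(t)$, and these together with $U$ have common intersection containing $\alpha(t)$, hence span a simplex of $\nerveu$.

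For the inclusion $\tspu\subseteq\ker p_{\scru\ast}$ I would first show that a single step $\alpha\simu\beta$ forces $p_{\scru}\circ\alpha$ and $p_{\scru}\circ\beta$ to be homotopic rel endpoints in $|\nerveu|$. Writing $\alpha_i,\beta_i$ for the pieces lying in the common set $U_i$ provided by the definition of $\simu$, both $p_{\scru}\circ\alpha_i$ and $p_{\scru}\circ\beta_i$ lie in the contractible set $\overline{St}(U_i,\nerveu)$. Since the breakpoints $\alpha(a_i),\beta(b_i)$ lie in $U_i\cap U_{i+1}$, their images lie in $\overline{St}(U_i,\nerveu)\cap\overline{St}(U_{i+1},\nerveu)$, and I would choose connecting paths $\sigma_i$ between them inside this intersection, which contains the edge $[U_i,U_{i+1}]$ and to which each relevant point is joined by a linear path within a simplex. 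Inserting the $\sigma_i$ and using simple connectivity of each closed star piece-by-piece yields $p_{\scru}\circ\alpha\simeq p_{\scru}\circ\beta$ rel endpoints. Iterating over a $\scru$-homotopy, every element of $\nulhu=\tspu$ maps to the trivial class.

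The reverse inclusion $\ker p_{\scru\ast}\subseteq\tspu$ is the main obstacle, since it requires pulling a null-homotopy living in the nerve back to a genuine $\scru$-homotopy in $X$. To do this I would fix for each vertex $U\in\scru$ a point $x_U\in U$ with $x_{U_0}=x_0$, and for each edge $[U,U']$ of $\nerveu$ a path $\lambda_{U,U'}$ from $x_U$ to $x_{U'}$ through a point of $U\cap U'$, with first half in $U$ and second half in $U'$ (possible as the sets are path connected). Concatenation assigns to each edge path $E$ in $\nerveu$ a path $\tilde\lambda(E)$ in $X$. Given $[\alpha]\in\ker p_{\scru\ast}$, I would partition $\alpha$ into pieces lying in sets $W_1,\dots,W_N\in\scru$ and let $E$ be the resulting edge loop at $U_0$; the closed-star argument gives $[E]=[p_{\scru}\circ\alpha]=1$, while the flexibility of $\simu$ (which constrains only the overall endpoints, not the intermediate breakpoints) lets one verify $\alpha\simequ\tilde\lambda(E)$. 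It then remains to show $\tilde\lambda(E)\simequ c_{x_0}$. Because $[E]=1$, the edge loop $E$ reduces to the trivial loop through the standard elementary moves of the edge-path group, namely replacing a segment $[U,U']$ by $[U,U'']\ast[U'',U']$ whenever $U,U',U''$ span a simplex. The crucial point is that each move alters $\tilde\lambda(E)$ only up to $\scru$-homotopy: this follows from the elementary fact that any two paths with common endpoints lying in a single member of $\scru$ are directly $\simu$-related, applied after routing the paths through a point of the triple intersection $U\cap U'\cap U''$ (and from Lemma \ref{homotopicimpliesuhomotopic} for the cancellation moves). Hence $\tilde\lambda(E)\simequ c_{x_0}$, so $\alpha\simequ\tilde\lambda(E)\simequ c_{x_0}$ and $[\alpha]\in\nulhu=\tspu$.

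Finally, surjectivity of $p_{\scru\ast}$ reuses the reconstruction: every class in $\pi_1(|\nerveu|,U_0)$ is represented by an edge loop $E$ based at $U_0$, and $\tilde\lambda(E)$ is a loop in $X$ at $x_0$ with $p_{\scru\ast}[\tilde\lambda(E)]=[E]$. The latter is checked edge by edge, since $p_{\scru}\circ\lambda_{U,U'}$ runs from $p_{\scru}(x_U)$ to $p_{\scru}(x_{U'})$ inside $\overline{St}(U,\nerveu)\cup\overline{St}(U',\nerveu)$ and, by contractibility of the closed stars, deforms onto the edge $[U,U']$. I expect the delicate bookkeeping to concentrate in the middle inclusion—specifically in verifying that $\tilde\lambda(E)$ is $\scru$-homotopic to $\alpha$ and that each elementary move is absorbed by a $\scru$-homotopy—so that is where I would spend the most care.
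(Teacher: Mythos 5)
Your proposal is correct in outline but takes a genuinely different route from the paper for the hard inclusion $\ker p_{\scru\ast}\subseteq\tspu$. The paper works two-dimensionally: it extends the null-homotopy of $p_{\scru}\circ\beta$ over $|\Delta_{2}|$, takes a simplicial approximation on a fine barycentric subdivision, builds an extension of $\beta$ to the $1$-skeleton whose restriction to each small triangle is $\scru$-admissible, and then decomposes $[\beta]$ into Spanier edge loops (Lemmas \ref{fishnet}, \ref{uadmissible}, \ref{admissibletheorem}, \ref{technical}), each of which is placed in $\tspu$ by an explicit six-factor computation. You instead stay one-dimensional: you invoke $\tspu=\nulhu$ (Theorem \ref{characterizethickspangrp}), present $\pi_{1}(|\nerveu|,U_0)$ as the edge-path group, transport edge paths back to $X$ via a chosen system of connecting paths $\lambda_{U,U'}$, and check that each elementary contraction or expansion over a $2$-simplex $\{U,U',U''\}$ of the nerve is absorbed by $\simequ$. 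Your route is conceptually lighter, leaning on the standard edge-path presentation and on Section \ref{sectionuhomotopy}; the paper's route yields explicit factorizations into thick Spanier generators and avoids the edge-path machinery. Your treatments of surjectivity and of $\tspu\subseteq\ker p_{\scru\ast}$ are essentially the paper's (Lemmas \ref{homotopicpaths}, \ref{surjectivity}, \ref{inclusion2}) reorganized around closed stars, and your key observation that $x\in U$ forces $p_{\scru}(x)\in\overline{St}(U,\nerveu)$ does follow from the canonical-map condition exactly as you argue. One detail you must still nail down: the relation $\simu$ requires subintervals of \emph{positive length} mapped into each member of the common chain, and $\lambda_{U,U'}$ need not spend positive time in $U''$. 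To realize the elementary move, first replace $\lambda_{U,U'}$ by a path through a point $z\in U\cap U'\cap U''$ that pauses at $z$ for a nondegenerate interval (this change is a homotopy rel endpoints, hence harmless by Lemma \ref{homotopicimpliesuhomotopic}); the chain $U,U'',U'$ then witnesses $\simu$ against $\lambda_{U,U''}\ast\lambda_{U'',U'}$. With that repair, and with the routine bookkeeping you already flag, the argument goes through.
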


The above theorem is proved in the following three subsections in which the
hypotheses of the statement are assumed; exactness follows from Theorems \ref{exactness1}, \ref{exactness2} and \ref{exactness3}. Note if $(\mathscr{V},V_0)\in\Lambda$ is a normal refinement of $(\mathscr{U},U_0)$, where $\mathscr{V}$ also consists of path connected sets and $p_{\mathscr{V}}:X\to |N(\mathscr{V})|$ is a canonical map, then there is a morphism 
\begin{equation*}
\xymatrix{ 1 \ar[r] & \tspv \ar[d]^{i_{\scru\scrv}} \ar[r]^{i_{\scrv}}
\ar[r] & \pionex \ar[d]^{id} \ar[r]^-{p_{\scrv\ast}} &
\pi_{1}(|\nervev|,V_0) \ar[d]^{p_{\scru\scrv\ast}} \ar[r] & 1\\ 1 \ar[r] &
\tspu \ar[r]_{i_{\scru}} & \pionex \ar[r]_-{p_{\scru\ast}} &
\pi_{1}(|\nerveu|,U_0) \ar[r] & 1}
\end{equation*}
of short exact sequences of groups where the left square consists of three
inclusion maps and (at least) one identity.

\begin{corollary}
\label{sesofinversesystem} Suppose $X$ is paracompact Hausdorff and $\Lambda
^{\prime}$ is a directed subset of $\Lambda$ such that if $(\scru,U_0)\in \Lambda '$, then $\mathscr{U}$ contains only path connected sets. There are level morphisms of inverse systems $i:(\Pi^{Sp}(\scru,x_0),i_{\scru\scrv},\Lambda ^{\prime})\to
(\pi_{1}(X,x_0),id,\Lambda ^{\prime})$ and $p:(\pi_{1}(X,x_0),id,\Lambda
^{\prime})\to (\pi_{1}(|N(\mathscr{U})|,U_0),p_{\scru\scrv\ast},\Lambda ^{\prime})$ such that 
\begin{equation*}
\xymatrix{ 1 \ar[r] & \tspu \ar[r]^-{i_{\scru}} & \pionex
\ar[r]^-{p_{\scru\ast}} & \pi_{1}(|\nerveu|,U_0) \ar[r] & 1}
\end{equation*}
is exact for each $(\scru,U_0)\in \Lambda ^{\prime}$. Moreover, 
\begin{equation*}
\xymatrix{ 1 \ar[r] & (\tspu,i_{\scru\scrv},\Lambda ') \ar[r]^-{i} &
(\pionex,id,\Lambda ') \\& \ar[r]^-{p} &
(\pi_{1}(|\nerveu|,U_0),p_{\scru\scrv\ast},\Lambda ') \ar[r] & 1}
\end{equation*}
is an exact sequence in the category of pro-groups.
\end{corollary}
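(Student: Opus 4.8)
The plan is to assemble the desired pro-group sequence out of the single-cover short exact sequences produced by Theorem \ref{shortexactsequence}, over the common directed index set $\Lambda'$, so that pro-exactness reduces to levelwise exactness. First I would record that the three objects are genuine inverse systems over $\Lambda'$: the middle one, $(\pionex, id, \Lambda')$, is the constant system; the bonding maps $i_{\scru\scrv}$ of $(\tspu, i_{\scru\scrv}, \Lambda')$ are the inclusions $\tspv \hookrightarrow \tspu$ guaranteed by Proposition \ref{tspanier1} whenever $(\scrv,V_0)$ refines $(\scru,U_0)$; and the bonding maps $p_{\scru\scrv\ast}$ of $(\pi_1(|\nerveu|,U_0), p_{\scru\scrv\ast}, \Lambda')$ are the induced projections of the \v{C}ech expansion. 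The components of $i$ and $p$ at an index $(\scru,U_0)$ are the inclusion $i_{\scru}$ and the homomorphism $p_{\scru\ast}$, respectively; directedness of $\Lambda'$ is assumed, so these are all genuine inverse systems.

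Next I would check that $i$ and $p$ are level morphisms, i.e.\ that their components commute with the bonding maps, and that each level is exact. For $i$ the requisite square commutes trivially, since $i_{\scru}\circ i_{\scru\scrv}$ and $i_{\scrv}$ are both the inclusion of $\tspv$ into $\pionex$. For $p$ the square commutes precisely because $p_{\scru\scrv\ast}\circ p_{\scrv\ast}=p_{\scru\ast}$, the compatibility of canonical maps recorded in the preliminaries (which in turn follows from $p_{\scru\scrv}\circ p_{\scrv}\simeq p_{\scru}$ as based maps). These are exactly the squares of the morphism of short exact sequences displayed immediately after Theorem \ref{shortexactsequence}. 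Levelwise exactness then follows by applying Theorem \ref{shortexactsequence} at each $(\scru,U_0)\in\Lambda'$: its hypotheses hold because $\Lambda'\subseteq\Lambda$, every $\scru$ indexed by $\Lambda'$ consists of path connected sets by assumption, and a canonical map $p_{\scru}$ exists for any member of $\Lambda$.

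Finally, a level sequence of level morphisms over a fixed directed set which is exact at every index is, by definition, a short exact sequence of pro-groups in the sense of \cite{MS82}; this delivers the second, pro-group, assertion. The proof is therefore essentially bookkeeping, with all the substantive work already contained in Theorem \ref{shortexactsequence}. The only point deserving care—and the sole place where something beyond formal manipulation is needed—is ensuring that $p$ is an honest morphism of systems, i.e.\ that the separately chosen canonical maps $p_{\scru}$ are mutually compatible up to based homotopy; this is exactly what the uniqueness of canonical maps up to based homotopy, established in the preliminaries, provides, so no additional construction is required.
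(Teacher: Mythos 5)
Your proposal is correct and takes essentially the same route as the paper: levelwise exactness is obtained from Theorem \ref{shortexactsequence}, the commuting squares are those displayed after that theorem, and the conclusion passes to the category of pro-groups. The one small quibble is that a levelwise exact sequence being exact as a sequence of pro-groups is not literally true ``by definition'' but is the content of a theorem of Marde\v{s}i\'c--Segal \cite[Ch.~II, \S 2.3, Theorem 10]{MS82}, which is the citation the paper supplies at exactly this point.
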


\begin{proof}
The level sequences, i.e. the sequences for each element of $\Lambda '$, are exact by Theorem \ref{shortexactsequence}. This gives rise to the exact sequence in the category of pro-groups \cite[Ch. II, \textsection 2.3, Theorem 10]{MS82}.
\end{proof}
Theorems \ref{characterizethickspangrp} and \ref{shortexactsequence} combine
to give a characterization of $\ker p_{\mathscr{U}\ast}$ in terms of the $%
\mathscr{U}$-homotopy of the previous Section.

\begin{corollary}
If $\mathscr{U}$ consists of path connected sets and $\alpha:[0,1]\to X$ is
a loop based at $x_0$, then $[\alpha]\in \ker p_{\mathscr{U}\ast}$ if and
only if $\alpha$ is null-$\mathscr{U}$-homotopic.
\end{corollary}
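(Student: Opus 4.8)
The plan is to read this equivalence off directly from the two main results already established, since essentially all of the work has been done. First I would note that for the homomorphism $p_{\scru\ast}$ to be defined at all one needs a canonical map $p_{\scru}:X\to |\nerveu|$ to exist, so the statement is to be understood under the standing hypothesis $(\scru,U_0)\in\Lambda$ of Theorem \ref{shortexactsequence}; combined with the assumed path connectedness of the members of $\scru$, this places us squarely within the hypotheses of both Theorem \ref{shortexactsequence} and the equality clause of Theorem \ref{characterizethickspangrp}.

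The argument then reduces to a chain of two equalities of subgroups of $\pionex$. Exactness of the sequence in Theorem \ref{shortexactsequence} at the middle term $\pionex$ says precisely that the image of the inclusion $i_{\scru}$ equals $\ker p_{\scru\ast}$; since $i_{\scru}$ is injective with image $\tspu$, this yields $\ker p_{\scru\ast}=\tspu$. Because $\scru$ consists of path connected sets, the equality clause of Theorem \ref{characterizethickspangrp} gives $\tspu=\nulhu$. Combining the two, $\ker p_{\scru\ast}=\nulhu$, and by definition $\nulhu$ is the subgroup of $\pionex$ consisting of the classes of null-$\scru$-homotopic loops based at $x_0$.

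It remains only to pass from this subgroup equality to the stated equivalence for the particular loop $\alpha$, and this is the one point deserving care, though I do not expect a genuine obstacle. The forward direction is immediate: if $\alpha$ is null-$\scru$-homotopic then $[\alpha]\in\nulhu=\ker p_{\scru\ast}$. For the converse, the relation $[\alpha]\in\ker p_{\scru\ast}=\nulhu$ a priori only tells us that $\alpha$ is homotopic rel.\ endpoints to \emph{some} null-$\scru$-homotopic loop $\alpha'$; to conclude that $\alpha$ \emph{itself} is null-$\scru$-homotopic I would invoke Lemma \ref{homotopicimpliesuhomotopic} to obtain $\alpha\simequ\alpha'$ and then use transitivity of the equivalence relation $\simequ$ together with $\alpha'\simequ c_{x_0}$. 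In other words, the only substantive check is the well-definedness remark that being null-$\scru$-homotopic is a property of the homotopy class rather than of the chosen representative, which is exactly what Lemma \ref{homotopicimpliesuhomotopic} guarantees.
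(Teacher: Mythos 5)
Your proof is correct and follows exactly the route the paper intends: the paper states this corollary without proof, remarking only that Theorems \ref{characterizethickspangrp} and \ref{shortexactsequence} combine to give it, which is precisely your chain $\ker p_{\mathscr{U}\ast}=\Pi^{Sp}(\mathscr{U},x_0)=\nu(\mathscr{U},x_0)$. Your additional care in checking, via Lemma \ref{homotopicimpliesuhomotopic}, that null-$\mathscr{U}$-homotopicity depends only on the homotopy class of $\alpha$ is a worthwhile point the paper leaves implicit, but it does not change the argument.
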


\subsection{Surjectivity of $p_{\mathscr{U}\ast}$}

\begin{lemma}
\label{homotopicpaths} Let $K$ be a simplicial complex and $%
L,L^{\prime}:[0,1]\to |K|$ be loops where $L(0)=z_1=L(1)$ and $%
L^{\prime}(0)=z_2=L^{\prime}(1)$. If there are vertices $v_1,v_2,...,v_m\in
K $ and partitions 
\begin{equation*}
0=s_0<s_1<s_2<\dots <s_m=1 \text{ and }0=t_0<t_1<t_2<\dots <t_m=1
\end{equation*}
such that $L([s_{k-1},s_k])\cup L^{\prime}([t_{k-1},t_k])\subset St(v_k,K)$
for each $k=1,2,...,m$, then $L$ and $L^{\prime}$ are freely homotopic in $%
|K|$. If $z_1=z_2$, then $L$ and $L^{\prime}$ are homotopic rel. basepoint.
\end{lemma}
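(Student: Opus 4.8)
The plan is to build the homotopy directly on the cylinder $C=[0,1]\times[0,1]/\big((0,t)\sim(1,t)\big)$, with $L$ on the bottom circle and $L'$ on the top, by cutting $C$ into $m$ vertical cells and filling each cell with a map into a single open star. Two geometric facts drive everything, both furnished by Example \ref{starcover}: every open star $St(v,K)$ is contractible, and whenever $St(v,K)\cap St(v',K)\neq\emptyset$ the vertices $v,v'$ span a simplex and this intersection is contractible, hence path connected. After reparametrizing $L$ and $L'$ by piecewise-linear self-homeomorphisms of $[0,1]$ fixing $0$ and $1$ (which alters each only up to based homotopy, so it suffices to treat the reparametrized loops), I may assume $s_k=t_k=k/m$. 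Then cell $k$ is the honest rectangle $[\tfrac{k-1}{m},\tfrac{k}{m}]\times[0,1]$, and both $L$ and $L'$ carry the base of cell $k$ into $St(v_k,K)$.

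First I would record that consecutive stars overlap. For $1\le k\le m-1$ the junction point $L(s_k)$ lies in $L([s_{k-1},s_k])\cap L([s_k,s_{k+1}])\subseteq St(v_k,K)\cap St(v_{k+1},K)$, and likewise $L'(t_k)$; the closing-up point $z_1=L(s_0)=L(s_m)$ lies in $St(v_m,K)\cap St(v_1,K)$, as does $z_2$. Hence each such intersection is nonempty, so path connected by Example \ref{starcover}. On the vertical edge of $C$ between cell $k$ and cell $k+1$ I then choose a path $\sigma_k:[0,1]\to St(v_k,K)\cap St(v_{k+1},K)$ from $L(s_k)$ to $L'(t_k)$ (indices mod $m$); on the seam edge I take $\sigma_0=\sigma_m$ running from $z_1$ to $z_2$ inside $St(v_m,K)\cap St(v_1,K)$, and in the based case $z_1=z_2$ I take $\sigma_0=c_{z_1}$.

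With these choices the boundary of rectangle $k$ is the concatenation of $L|_{[s_{k-1},s_k]}$, then $\sigma_k$, then $L'|_{[t_{k-1},t_k]}$ reversed, then $\sigma_{k-1}$ reversed; the corner values match and each of the four edges lies in $St(v_k,K)$ (the side paths because $\sigma_{k-1}$ and $\sigma_k$ were chosen in intersections one of whose factors is $St(v_k,K)$). Since $St(v_k,K)$ is contractible, this boundary loop is null-homotopic there and so extends to a map of the whole rectangle into $St(v_k,K)$. The fillings agree on shared vertical edges, where both restrict to the same $\sigma_k$, so they glue to a continuous $F:C\to|K|$ with $F(\cdot,0)=L$ and $F(\cdot,1)=L'$, giving the free homotopy. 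In the based case $F$ sends the seam $\{0\}\times[0,1]$ to $\sigma_0=c_{z_1}$, so $F$ is a homotopy rel.\ basepoint.

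I expect the only real friction to be the index and corner bookkeeping: checking that each connecting path lies in \emph{both} adjacent stars (so that it serves simultaneously as an edge of two cells, each mapping into its own star) and that the four boundary pieces of a cell meet at corners to form a genuine loop. All of the topological content is the contractibility of open stars and of their pairwise intersections, which is precisely Example \ref{starcover}; once those are granted, extending each boundary loop over its rectangle and gluing the results are routine.
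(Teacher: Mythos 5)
Your proposal is correct and follows essentially the same route as the paper: choose connecting paths in the (nonempty, hence path connected) intersections $St(v_k,K)\cap St(v_{k+1},K)$, observe that each resulting boundary loop $L|_{[s_{k-1},s_k]}\ast\sigma_k\ast\overline{L'|_{[t_{k-1},t_k]}}\ast\overline{\sigma_{k-1}}$ lies in a single star and is therefore inessential, and glue the resulting disk fillings over the cylinder (taking the seam path constant in the based case). The only difference is that you make the cylinder decomposition and gluing explicit, which the paper leaves implicit.
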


\begin{proof}
Since a non-empty intersection of two stars in a simplicial complex is path connected, there is path $\gamma_k:[0,1]\to St(v_{k},K)\cap St(v_{k+1},K)$ from $L(s_{k})$ to $L'(t_k)$ for $k=1,...,m-1$. Let $\gamma_0=\gamma_{m}$ be a path in $St(v_m,K)\cap St(v_1,K)$ from $z_1$ to $z_2$. Each loop $L|_{[s_{k-1},s_k]}\ast \gamma_{k}\ast \overline{L'|_{[t_{k-1},t_k]}}\ast \overline{\gamma_{k-1}}$, $k=1,...,m$ has image in the star of a vertex and is therefore inessential. Thus $L$ and $L'$ are freely homotopic. If $z_1=z_2$, the same argument produces a basepoint-preserving homotopy when we take $\gamma_0=\gamma_m$ to be the constant path at $z_1=z_2$.
\end{proof}

\begin{lemma}
\label{surjectivity} If $\mathscr{U}$ consists of path connected sets and $%
E=[U_1,U_2]\ast [U_2,U_3]\ast \dots \ast[U_{n-1},U_n]$ is an edge loop in $%
|N(\mathscr{U})|$ such that $U_1=U_0=U_n$, then there is a loop $%
\alpha:[0,1]\to X$ based at $x_0$ such that $L=p_{\mathscr{U}}\circ \alpha$
and $E$ are homotopic rel. basepoint.
\end{lemma}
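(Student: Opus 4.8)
The plan is to construct the desired loop $\alpha$ by lifting the edge loop $E$ vertex-by-vertex through the open cover $\scru$, using path connectedness to glue the pieces together, and then to verify that the canonical map $p_{\scru}$ sends the reconstructed loop back to something homotopic to $E$ via the star-tracking criterion of Lemma \ref{homotopicpaths}.

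\textbf{Construction of $\alpha$.} Each edge $[U_{k},U_{k+1}]$ of $E$ witnesses a $1$-simplex of $\nerveu$, which by definition of the nerve means $U_{k}\cap U_{k+1}\neq\emptyset$. First I would choose, for each $k=1,\dots,n-1$, a point $x_{k}\in U_{k}\cap U_{k+1}$, taking $x_{0}=x_{n}=x_0$ since $U_{1}=U_{0}=U_{n}$ is the basepoint vertex (here I use the hypothesis $(\scru,U_0)\in\Lambda$ so that the canonical map is based with $p_{\scru}(x_0)=U_0$; one may arrange $x_0\in U_0$). Since each $U_{k}$ is path connected by hypothesis, there is a path $\alpha_{k}:[0,1]\to U_{k}$ from $x_{k-1}$ to $x_{k}$. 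I then set $\alpha=\alpha_{1}\ast\alpha_{2}\ast\dots\ast\alpha_{n-1}$, a loop based at $x_0$ since $\alpha(0)=x_{0}=x_0$ and $\alpha(1)=x_{n-1}\in U_{n-1}\cap U_{n}$, which I can close up to $x_0$ using path connectedness of $U_n=U_0$.

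\textbf{Comparing $L=p_{\scru}\circ\alpha$ with $E$.} The key point is that for each $k$, the subpath $\alpha_{k}$ lies entirely in $U_{k}$, so by the defining property of a canonical map, $p_{\scru}(U_{k})\subseteq St(U_{k},\nerveu)$; more precisely $\alpha_{k}([0,1])\subseteq U_{k}$ forces $L([\,(k-1)/n,\,k/n\,])=p_{\scru}(\alpha_{k}([0,1]))\subseteq St(U_{k},\nerveu)$, since a point of $U_k$ has nonzero barycentric coordinate at the vertex $U_k$. On the edge-loop side, the linear edge $[U_{k},U_{k+1}]$ lies in the closed simplex spanned by $U_k,U_{k+1}$, hence inside $St(U_{k},\nerveu)$ as well. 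With a compatible choice of partitions $s_{k}=k/n$ of the domain of $L$ and $t_k$ of the domain of $E$, each pair of corresponding arcs lands in the common star $St(U_{k},\nerveu)$, which is exactly the hypothesis of Lemma \ref{homotopicpaths}. Applying that lemma with $v_k=U_k$ and $z_1=z_2=U_0$ yields a based homotopy between $L$ and $E$.

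\textbf{Main obstacle.} The routine part is the path construction; the delicate bookkeeping is lining up the two partitions so that the $k$-th arc of $L$ and the $k$-th edge of $E$ both sit in $St(U_{k},\nerveu)$, and in correctly handling the endpoint vertices $U_1=U_n=U_0$ so the homotopy is genuinely basepoint-preserving rather than merely free. The one subtle point deserving care is verifying that $p_{\scru}\circ\alpha_{k}$ really does remain in the \emph{open} star $St(U_{k},\nerveu)$ throughout: this follows because the barycentric coordinate of $p_{\scru}(\alpha_k(s))$ at the vertex $U_k$ equals $\phi_{U_k}(\alpha_k(s))$, which is positive on the support of $\phi_{U_k}\supseteq$ the relevant part of $U_k$ — but one must confirm it is positive on \emph{all} of $\alpha_k([0,1])$, which may require slightly shrinking or reparametrizing the $\alpha_k$ near their endpoints, or invoking that the relevant barycentric coordinate stays positive on a path-connected locus. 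Once this containment is secured, Lemma \ref{homotopicpaths} finishes the argument immediately.
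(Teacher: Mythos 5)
Your construction of $\alpha$ matches the paper's, but the comparison step rests on a containment that is false in general: you claim $p_{\scru}(\alpha_k([0,1]))\subseteq St(U_k,\nerveu)$ because points of $U_k$ have ``nonzero barycentric coordinate at the vertex $U_k$.'' The defining property of a canonical map goes the other way: $p_{\scru}^{-1}(St(U,\nerveu))\subseteq U$, i.e.\ $\phi_{U}>0$ implies membership in $U$, not conversely. A partition of unity subordinated to $\scru$ can vanish on large portions of $U_k$ (take $X=\mathbb{R}$, $U_1=(-\infty,1)$, $U_2=(-1,\infty)$, with $\phi_{U_1}$ supported in $(-\infty,0]$: then $x=1/2\in U_1$ but $p_{\scru}(x)\notin St(U_1,\nerveu)$). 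So an arc of $\alpha$ lying in $U_k$ gives you no control over which star its $p_{\scru}$-image visits, and the hypothesis of Lemma \ref{homotopicpaths} is not met with the partition $s_k=k/n$ and the vertices $v_k=U_k$. You flag this worry at the end, but the proposed remedies (shrinking or reparametrizing $\alpha_k$) do not address it, since the set $\{\phi_{U_k}>0\}=p_{\scru}^{-1}(St(U_k,\nerveu))$ need not be path connected nor contain your chosen endpoints $x_{k-1},x_k$.

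The paper closes exactly this gap with a two-stage comparison. It refines by $\scrv=\{p_{\scru}^{-1}(St(U,\nerveu))\}$ and subdivides each $[\frac{j-1}{n},\frac{j}{n}]$ so that $\alpha([s_j^{k-1},s_j^k])\subseteq V_{W_j^k}$ for some $W_j^k\in\scru$; this \emph{does} force $L([s_j^{k-1},s_j^k])\subseteq St(W_j^k,\nerveu)$, so Lemma \ref{homotopicpaths} applies to compare $L$ with an auxiliary edge loop $L'$ built from the $W_j^k$ (generally different vertices from the $U_j$). It then remains to show $L'\simeq E$, which the paper does by exhibiting $L'\ast\overline{E}$ as a product of inessential triangular loops using the nonempty intersections $U_j\cap W_j^k\cap W_j^{k+1}$. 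This second stage is absent from your argument and cannot be skipped; to complete your proof you would need to introduce the intermediate edge loop and carry out that word reduction (or an equivalent simplicial-approximation argument).
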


\begin{proof}
Since each $U_j$ is path connected and $U_j\cap U_{j+1}\neq \emptyset$, there is a loop $\alpha:[0,1]\to X$ based at $x_0$ such that $\alpha\left(\left[\frac{j-1}{n},\frac{j}{n}\right]\right)\subseteq U_j$ for each $j=1,...,n$. We claim $E$ is homotopic to $L=p_{\scru}\circ \alpha$.

Recall $V_U=p_{\scru}^{-1}(St(U,\nerveu))\subseteq U$ for each $U\in\scru$ so that $\scrv=\{V_U|U\in\scru\}$ is an open refinement of $\scru$. Consequently, for each $j=1,...,n$, there is a subdivision $\frac{j-1}{n}=s_{j}^{0}<s_{j}^{1}<\dots <s_{j}^{m_j}=\frac{j}{n}$ and a sequence $W^{1}_{j},...,W^{m_j}_{j}\in \scru$ such that $\alpha([s^{k-1}_{j},s^{k}_{j}])\subseteq V_{W_{j}^{k}}\subseteq W_{j}^{k}$ for $k=1,...,m_j$. We may assume $W_{j}^{m_j}=W_{j+1}^{1}$ for $j=1,...,n-1$ and, since $x_0\in V_{U_0}$, we may assume $W_{1}^{1}=U_0=W_{n}^{m_n}$. Note that $L([s^{k-1}_{j},s^{k}_{j}])\subseteq St(W_{j}^{k},\nerveu)$.\\

Define an edge loop $L'$ based at the vertex $U_0$ as\[\left(\ast_{k=1}^{m_1-1}[W_{1}^{k},W_{1}^{k+1}]\right)\ast \dots \ast\left(\ast_{k=1}^{m_j-1}[W_{j}^{k},W_{j}^{k+1}]\right)\ast \dots\ast \left(\ast_{k=1}^{m_n-1}[W_{n}^{k},W_{n}^{k+1}]\right)\]

To see $L'$ is homotopic to $L$, observe the set of intervals $[s^{k-1}_{j},s^{k}_{j}]$, and consequently the set of neighborhoods $W_{j}^{k}$, inherits an ordering from the natural ordering of $[0,1]$. Suppose the intervals $[s^{k-1}_{j},s^{k}_{j}]$ are ordered as $A_1,...,A_N\subseteq [0,1]$ and the neighborhoods $W_{j}^{k}$ are ordered as $w_1,...,w_N$ so that $L(A_\ell)\subseteq St(w_\ell,\nerveu)$. For $\ell=1,...,N-1$, let $b_{\ell}$ be the barycenter of $[w_{\ell},w_{\ell+1}]$. Find a partition $0=t_0<t_1<\dots <t_N=1$ such that $L'([t_{\ell-1},t_{\ell}])\subseteq St(w_\ell,\nerveu)$ for $\ell=1,...,N$ by choosing $t_{\ell}$ so that $L'(t_\ell)=b_{\ell}$ for $\ell=1,...,N$. Lemma \ref{homotopicpaths} applies and gives a basepoint preserving homotopy between $L$ and $L'$.

It now suffices to show the edge loops $L'$ and $E=[U_1,U_2]\ast [U_2,U_3]\ast \dots \ast[U_{n-1},U_n]$ are homotopic. We do so by showing $L'\ast \overline{E}$ is homotopic to a concatenation of inessential loops based at $U_0$. First, for each $j=1,...,n$, let $E_j=[U_1,U_2]\ast \dots \ast[U_{j-1},U_j]$. Observe $E_1=[U_1,U_1]$ is constant and $E_n=E$.\\
\begin{enumerate}
\item For $j=1,...,n$ and $k=1,...,m_j-1$, let \[a_{j}^{k}=E_j\ast [U_j,W_{j}^{k}]\ast [W_{j}^{k},W_{j}^{k+1}]\ast[W_{j}^{k+1},U_j]\ast \overline{E_{j}}.\] These loops are well-defined and inessential since $U_j\cap W_{j}^{k}\cap W_{j}^{k+1}\neq \emptyset$ determines a 2-simplex in $\nerveu$. Even in the case two or more of $U_j$, $W_{j}^{k}$, $W_{j}^{k+1}$ are equal, $a_{j}^{k}$ is still inessential.
\item For $j=1,...,n-1$, let \[b_{j}=E_j\ast [U_j,W_{j}^{m_j}]\ast [W_{j}^{m_j},U_{j+1}]\ast[U_{j+1},U_j]\ast \overline{E_{j}}.\] These loops are well-defined and inessential since $U_j\cap W_{j}^{m_j}\cap U_{j+1}\neq \emptyset$.
\end{enumerate}
Therefore the product \[p=\left(\ast_{k=1}^{m_1-1}a_{1}^{k}\right)\ast b_1 \ast \dots \ast \left(\ast_{k=1}^{m_j-1}a_{j}^{k}\right)\ast b_j \ast \dots \ast b_{n-1}\ast \left(\ast_{k=1}^{m_n-1}a_{n}^{k}\right)\]is inessential. We observe $p$ is homotopic to $L'\ast \overline{E}$ by reducing words. Indeed, for each $j=1,...,n-1$, $\left(\ast_{k=1}^{m_j-1}a_{j}^{k}\right)\ast b_j$ reduces to \[E_j\ast [U_j,W_{j}^{1}]\ast \left(\ast_{k=1}^{m_j-1}[W_{j}^{k},W_{j}^{k+1}]\right)\ast [W_{j}^{m_j},U_{j+1}]\ast \overline{E_{j}}\] and the last factor $\left(\ast_{k=1}^{m_n-1}a_{n}^{k}\right)$ reduces to $E_{n-1}\ast[U_n,W_{n}^{1}]\ast \left(\ast_{k=1}^{m_n-1}[W_{n}^{k},W_{n}^{k+1}]\right)\ast \overline{E_{n}}$ (recall that $W_{n}^{m_n}=U_0=U_n$). Using the fact that $W_{1}^{1}=U_1=U_n$ and $W_{j}^{m_j}=W_{j+1}^{1}$ to identify constant paths, it follows that $p$ reduces to $L' \ast \overline{E_n}=L'\ast \overline{E}$.
\end{proof}

\begin{theorem}\label{exactness1}
If $(\mathscr{U},U_0)\in \Lambda$ where $\mathscr{U}$ consists of
path connected sets and $p_{\mathscr{U}}:X\to |N(\mathscr{U})|$ is a
canonical map, then $p_{\mathscr{U}\ast}$ is surjective.
\end{theorem}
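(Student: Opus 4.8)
The plan is to deduce surjectivity of $p_{\scru\ast}$ essentially immediately from Lemma \ref{surjectivity}, using the standard fact (recalled in Section 2) that in a geometric simplicial complex every homotopy class of loops rel. basepoint is represented by an edge loop. The target group here is $\pi_1(|\nerveu|,U_0)$, the fundamental group of the geometric realization of the nerve, so this edge-path representation is exactly the tool needed to reduce an arbitrary class to one of the combinatorial loops that Lemma \ref{surjectivity} knows how to lift.

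First I would take an arbitrary class $[\ell]\in \pi_1(|\nerveu|,U_0)$ and choose an edge loop $E=[U_1,U_2]\ast[U_2,U_3]\ast\dots\ast[U_{n-1},U_n]$ based at the vertex $U_0$, with $U_1=U_0=U_n$, such that $[E]=[\ell]$; such an $E$ exists by the edge-path representation theorem for simplicial complexes. The hypotheses of Lemma \ref{surjectivity} are now met verbatim: $\scru$ consists of path connected sets by assumption, and $E$ is precisely an edge loop of the required form. Applying the lemma produces a loop $\alpha:[0,1]\to X$ based at $x_0$ with $p_{\scru}\circ\alpha$ homotopic rel. basepoint to $E$. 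Hence $p_{\scru\ast}([\alpha])=[p_{\scru}\circ\alpha]=[E]=[\ell]$, and since $[\ell]$ was arbitrary, $p_{\scru\ast}$ is surjective.

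The hard part of this result has in fact already been discharged inside Lemma \ref{surjectivity}, where path connectedness of the members of $\scru$ is used twice: once to string together paths over consecutive overlapping cover elements $U_j\cap U_{j+1}\neq\emptyset$ to build $\alpha$, and once to match $L=p_{\scru}\circ\alpha$ with $E$ by a subdivision-and-reduction argument on edge words. Consequently, for the present theorem there is no genuine obstacle: the only thing to verify is that the hypotheses of the theorem—$(\scru,U_0)\in\Lambda$ with $\scru$ path connected and $p_{\scru}$ a canonical map—supply exactly the hypotheses of Lemma \ref{surjectivity}, which they do, the membership in $\Lambda$ guaranteeing that a based canonical map $p_{\scru}$ exists in the first place. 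The theorem is thus a direct corollary of the lemma together with the edge-loop representation.
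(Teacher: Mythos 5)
Your proposal is correct and matches the paper's own proof exactly: both reduce an arbitrary class in $\pi_1(|N(\mathscr{U})|,U_0)$ to an edge loop via the edge-path representation theorem and then invoke Lemma \ref{surjectivity} to realize it as $p_{\mathscr{U}\ast}([\alpha])$. The paper states this in one sentence; your write-up just makes the same reduction explicit.
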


\begin{proof}
Since edge loops in $|\nerveu|$ represent all homotopy classes in $\pi_{1}(|\nerveu|,U_0)$, the theorem follows from Lemma \ref{surjectivity}.
\end{proof}

\subsection{$\Pi^{Sp}(\mathscr{U},x_0)\subseteq \ker p_{\mathscr{U}\ast}$}

The flavor of the proof of the following Lemma is quite similar to that of
Lemma \ref{surjectivity} but holds when the elements of $\mathscr{U}$ are
not necessarily path connected.

\begin{lemma}
\label{inclusion2} Let $\mathscr{U}$ be an open cover of $X$. Suppose $%
\gamma_1,\gamma_2$ are paths in $X$ such that $\gamma_1(1)=\gamma_2(0)$ and $%
\gamma_1(0)=\gamma_2(1)$. If $\gamma_i$ has image in $U_i\in\mathscr{U}$,
then $L=p_{\mathscr{U}}\circ (\gamma_1\ast\gamma_2)$ is an inessential loop
in $|N(\mathscr{U})|$.
\end{lemma}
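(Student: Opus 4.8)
The plan is to reduce the problem to the $1$-skeleton of $\nerveu$ and then fill in the resulting edge loop explicitly, in the spirit of the proof of Lemma~\ref{surjectivity}. Write $x_a=\gamma_1(0)=\gamma_2(1)$ and $x_b=\gamma_1(1)=\gamma_2(0)$. The key structural observation is that \emph{both} points lie in $U_1\cap U_2$, so that $U_1$ and $U_2$ span an edge of $\nerveu$ and, together with any vertex containing $x_a$ (or $x_b$), span a simplex. Since ``inessential'' means freely null-homotopic, basepoints may be ignored throughout.

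First I would pass to the refinement $\scrv=\{V_U=p_{\scru}^{-1}(St(U,\nerveu))\mid U\in\scru\}$ of $\scru$. Using a Lebesgue number for the open cover of $[0,1]$ obtained by pulling back $\scrv$ along $\gamma_1\ast\gamma_2$, I would choose a partition having $1/2$ as a subdivision point so that each subinterval is carried into some $V_W\subseteq W$. Writing the vertices for the $\gamma_1$-pieces as $A_1,\dots,A_p\in\scru$ and those for the $\gamma_2$-pieces as $B_1,\dots,B_q\in\scru$, the corresponding pieces of $L$ lie in the stars $St(A_k,\nerveu)$ and $St(B_k,\nerveu)$. Because every piece of $\gamma_1$ also lies in $U_1$ and every piece of $\gamma_2$ in $U_2$, each $A_k$ meets $U_1$ and each $B_k$ meets $U_2$; moreover the shared subdivision points show consecutive vertices span edges and that each consecutive triple $\{A_k,A_{k+1},U_1\}$ and $\{B_k,B_{k+1},U_2\}$ spans a $2$-simplex. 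Lemma~\ref{homotopicpaths} then yields that $L$ is freely homotopic to the edge loop
\[
L'=[A_1,A_2]\ast\dots\ast[A_{p-1},A_p]\ast[A_p,B_1]\ast[B_1,B_2]\ast\dots\ast[B_{q-1},B_q]\ast[B_q,A_1],
\]
the junction edges being legitimate since $x_b\in A_p\cap B_1$ and $x_a\in B_q\cap A_1$.

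Next I would collapse each half of $L'$ onto a single spoke. Using the $2$-simplices $\{A_k,A_{k+1},U_1\}$ to replace each $[A_k,A_{k+1}]$ by $[A_k,U_1]\ast[U_1,A_{k+1}]$ and cancelling the resulting backtracks, the $\gamma_1$-part telescopes up to homotopy to $[A_1,U_1]\ast[U_1,A_p]$, and symmetrically the $\gamma_2$-part telescopes to $[B_1,U_2]\ast[U_2,B_q]$. Thus $L'$ is freely homotopic to the short edge loop $A_1\to U_1\to A_p\to B_1\to U_2\to B_q\to A_1$. Finally I would contract this loop: the triangle $\{U_1,A_p,B_1\}$ (valid since $x_b$ lies in all three) reduces $[U_1,A_p]\ast[A_p,B_1]$ to $[U_1,B_1]$, the triangle $\{U_2,B_q,A_1\}$ (valid since $x_a$ lies in all three) reduces $[U_2,B_q]\ast[B_q,A_1]$ to $[U_2,A_1]$, the triangle $\{U_1,B_1,U_2\}$ (valid since $x_b\in U_1\cap B_1\cap U_2$) reduces $[U_1,B_1]\ast[B_1,U_2]$ to $[U_1,U_2]$, and the triangle $\{A_1,U_1,U_2\}$ (valid since $x_a\in A_1\cap U_1\cap U_2$) shows the remaining loop $A_1\to U_1\to U_2\to A_1$ bounds. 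Hence $L'$, and therefore $L$, is freely null-homotopic, i.e.\ inessential.

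I expect the main obstacle to be the bookkeeping in the first step: arranging the partition so that the $\gamma_1$- and $\gamma_2$-pieces, the junction points $x_a,x_b$, and the incidences of the $A_k,B_k$ with $U_1$ and $U_2$ are all controlled simultaneously, so that every edge and every $2$-simplex invoked in the reduction genuinely exists in $\nerveu$. Once these simplicial incidences are recorded, the homotopies are the routine backtrack-cancellation and triangle-filling arguments already carried out in Lemma~\ref{surjectivity}.
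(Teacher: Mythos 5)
Your argument is correct and follows essentially the same route as the paper's proof: refine by $\{p_{\scru}^{-1}(St(U,\nerveu))\mid U\in\scru\}$, replace $L$ by an edge loop via Lemma \ref{homotopicpaths}, and contract that edge loop using exactly the $2$-simplices $\{U_i,W,W'\}$ supplied by the subdivision points together with the fact that both $x_a$ and $x_b$ lie in $U_1\cap U_2$. The one point needing care is the application of Lemma \ref{homotopicpaths} to your $L'$ with its junction edges: the final piece of $L'$, assigned to $St(B_q,\nerveu)$, necessarily contains the terminal vertex $A_1$, which does not lie in the open star $St(B_q,\nerveu)$ when $A_1\neq B_q$; this is repaired either by appending $A_1$ as one more star-vertex (pairing it with a short terminal piece of $L$ near $x_a$, which lies in $St(A_1,\nerveu)$ by continuity) or, as the paper does, by choosing the vertex sequences so that $A_1=B_q$ and $A_p=B_1$ (possible since $x_a\in p_{\scru}^{-1}(St(A_1,\nerveu))$ and $x_b\in p_{\scru}^{-1}(St(A_p,\nerveu))$), so that no junction edges occur.
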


\begin{proof}
We again use that $\scrv=\{V_U=p_{\scru}^{-1}(St(U,\nerveu))|U\in\scru\}$ is open refinement of $\scru$. Find partitions $0=s_{1}^{0}<s_{1}^{1}<\dots <s_{1}^{m_1}=\frac{1}{2}$ and $\frac{1}{2}=s_{2}^{0}<s_{2}^{1}<\dots <s_{2}^{m_2}=1$ and a sequence of neighborhoods $W_{1}^{1},...,W_{1}^{m_1},W_{2}^{1},...,W_{2}^{m_2}\in \scru$ such that $\gamma_i([s_{j}^{k-1},s_{j}^{k}])\subseteq V_{W_{j}^{k}}\subseteq W_{j}^{k}$ for $j=1,2$ and $k=1,...,m_j$. Since $\gamma_1(0)=\gamma_2(1)$, we may assume $W_{1}^{1}=W_{2}^{m_2}$ and since $\gamma_1(1)=\gamma_2(0)$, we may assume $W_{1}^{m_1}=W_{2}^{1}$. Observe that $L([s_{j}^{k-1},s_{j}^{k}])\subseteq St(W_{j}^{k},\nerveu)$.\\

Let $L'$ be the loop defined as the concatenation \[\left(\ast_{k=1}^{m_1-1}[W_{1}^{k},W_{1}^{k+1}]\right)\ast \left(\ast_{k=1}^{m_2-1}[W_{2}^{k},W_{2}^{k+1}]\right).\]

Find a partition \[0=t_{1}^{0}<t_{1}^{1}<\dots <t_{1}^{m_2}=t_{2}^{0}<t_{2}^{1}<\dots <t_{2}^{m_2}=1\] such that $L'([t_{j}^{k-1},t_{j}^{k}])\subseteq St(W_{j}^{k},\nerveu)$. By Lemma \ref{homotopicpaths}, $L$ and $L'$ are freely homotopic.\\

It now suffices to show $L'$ is inessential. We do so by showing $L'$ is homotopic to a concatenation of inessential loops. First, let $E_1=[W_{1}^{1},U_1]$ and $E_2=[W_{1}^{1},U_1]\ast[U_1,U_2]$.
\begin{enumerate}
\item For $k=1,...,m_1-1$, let \[a_{1}^{k}=E_1\ast [U_1,W_{1}^{k}]\ast [W_{1}^{k},W_{1}^{k+1}]\ast [W_{1}^{k+1},U_1]\ast \overline{E_{1}}.\]
\item Let $b_1=E_1\ast [U_1,W_{1}^{m_1}]\ast [W_{2}^{1},U_2]\ast [U_2,U_1]\ast \overline{E_{1}}$.
\item For $k=1,...,m_2-1$, let \[a_{2}^{k}=E_2\ast [U_2,W_{2}^{k}]\ast [W_{2}^{k},W_{2}^{k+1}]\ast [W_{2}^{k+1},U_2]\ast \overline{E_{2}}.\]
\item Let $b_2=E_2\ast [U_2,W_{1}^{1}]=[W_{1}^{1},U_1]\ast[U_1,U_2]\ast[U_2,W_{1}^{1}]$.
\end{enumerate}
Note the loop $a_{1}^{k}$ is well-defined and inessential since $U_1\cap W_{1}^{k}\cap W_{1}^{k+1}\neq \emptyset$ determines a 2-simplex in $\nerveu$. Similarly, the loops defined in 2.-4. are well-defined and inessential.

Consequently, the concatenation \[p=\left(\ast_{k=1}^{m_1-1}a_{1}^{k}\right)\ast b_1 \ast \left(\ast_{k=1}^{m_2-1}a_{2}^{k}\right)\ast b_2\] is an inessential loop based at $W_{1}^{1}$. We observe $p$ is homotopic to $L'$ by reducing words. Indeed, $\left(\ast_{k=1}^{m_1-1}a_{1}^{k}\right)\ast b_1$ reduces to $\left(\ast_{k=1}^{m_1-1}[W_{1}^{k},W_{1}^{k+1}]\right)\ast [W_{2}^{1},U_{2}]\ast \overline{E_{2}}$ and $\left(\ast_{k=1}^{m_2-1}a_{2}^{k}\right)\ast b_2$ reduces to $E_2\ast [U_2,W_{2}^{1}]\ast \left(\ast_{k=1}^{m_2-1}[W_{2}^{k},W_{2}^{k+1}]\right)$.
\end{proof}

\begin{theorem} \label{exactness2}
If $(\mathscr{U},U_0)\in \Lambda$ and $p_{\mathscr{U}}:X\to |N(\mathscr{U})|$ is a canonical map, then $\Pi^{Sp}(\mathscr{U},x_0)\subseteq
\ker p_{\mathscr{U}\ast}$.
\end{theorem}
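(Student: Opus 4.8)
The plan is to deduce this theorem directly from Lemma \ref{inclusion2}, which already carries out all of the combinatorial work. Since $\ker p_{\scru\ast}$ is a subgroup of $\pionex$, it suffices to show that every generator of $\tspu$ lies in this kernel. So I would fix an arbitrary generator $[\alpha][\gamma_1][\gamma_2][\reva]$, where $[\alpha]\in\tX$ and $\gamma_i$ has image in some $U_i\in\scru$, and argue that $p_{\scru\ast}$ sends it to the identity of $\pi_{1}(|\nerveu|,U_0)$.

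First I would record that the very shape of this generator forces $\gamma_1\ast\gamma_2$ to be a loop and arranges the endpoints exactly as Lemma \ref{inclusion2} demands: the groupoid product $[\alpha][\gamma_1][\gamma_2][\reva]$ is defined only when $\alpha(1)=\gamma_1(0)$, $\gamma_1(1)=\gamma_2(0)$, and $\gamma_2(1)=\alpha(1)$. These relations yield $\gamma_1(0)=\gamma_2(1)$ and $\gamma_1(1)=\gamma_2(0)$, which are precisely the hypotheses of Lemma \ref{inclusion2}. Next I would push the generator forward. Writing $a=p_{\scru}\circ\alpha$ and $L=p_{\scru}\circ(\gamma_1\ast\gamma_2)$, functoriality of the homomorphism induced on the fundamental groupoid gives
\[
p_{\scru\ast}\big([\alpha][\gamma_1][\gamma_2][\reva]\big)=[a]\,[L]\,[\overline{a}],
\]
a conjugate of $[L]$ by the path class $[a]$ running from $U_0$ to $p_{\scru}(\alpha(1))$ in $|\nerveu|$.

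Then Lemma \ref{inclusion2} applies verbatim to tell me $L$ is an inessential loop in $|\nerveu|$. Since an inessential loop extends over $\dtr$ and so is null-homotopic, it represents the trivial element of the fundamental group at its basepoint; hence $[L]=1$. The displayed conjugate therefore collapses to $[a][\overline{a}]=1$, so the generator lies in $\ker p_{\scru\ast}$, and because these generators span $\tspu$, the desired inclusion follows.

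I do not anticipate any genuine obstacle: the real difficulty, namely producing the null-homotopy of $p_{\scru}\circ(\gamma_1\ast\gamma_2)$ through the edge-loop reduction in the nerve, has already been isolated and resolved in Lemma \ref{inclusion2}. The only points requiring minor care are matching the groupoid multiplication in the generator to the concatenation hypothesis of that lemma and translating \emph{inessential} into \emph{trivial in }$\pi_1$, both of which are immediate from the conventions set in Section 2.
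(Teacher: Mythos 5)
Your proposal is correct and is essentially identical to the paper's own proof: both reduce to an arbitrary generator, observe that the endpoint conditions implicit in the groupoid product make $\gamma_1\ast\gamma_2$ a loop satisfying the hypotheses of Lemma \ref{inclusion2}, and conclude that $p_{\mathscr{U}\ast}$ sends the generator to a conjugate of the trivial class. The only difference is that you spell out the endpoint bookkeeping explicitly, which the paper leaves implicit.
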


\begin{proof}
Suppose $g=[\alpha][\gamma_1\ast\gamma_2][\reva]$ is a generator of $\tspu$ where $\gamma_i$ is a path with image in $U_i\in \scru$. Since $p_{\scru\ast}(g)=[p_{\scru}\circ \alpha][p_{\scru}\circ(\gamma_1\ast\gamma_2)][\overline{p_{\scru}\circ \alpha}]$ and $p_{\scru}\circ(\gamma_1\ast\gamma_2)$ is inessential by Lemma \ref{inclusion2}, $p_{\scru\ast}(g)$ is trivial in $\pi_{1}(|\nerveu|,U_0)$.
\end{proof}

\subsection{$\ker p_{\mathscr{U}\ast}\subseteq \Pi^{Sp}(\mathscr{U},x_0)$}

To show the inclusion $\ker p_{\mathscr{U}\ast}\subseteq \Pi^{Sp}(\mathscr{U}%
,x_0)$ holds under the hypotheses of Theorem \ref{shortexactsequence}, we
work to extend loops $|\partial\Delta_{2}|\to X$ to maps on the 1-skeleton of
subdivisions of $|\Delta_{2}|$.

\begin{definition}
\emph{An edge loop $L$ is \textit{short} if it is the concatenation $L=[v_0,v_1]\ast [v_1,v_2]\ast [v_2,v_3]$ of three linear paths. A \textit{Spanier edge loop} is an edge loop of the form $E\ast L\ast \overline{E}$ where $E$
is an edge path and $L$ is a short edge loop.}
\end{definition}

\begin{lemma}
\label{fishnet} Let $H$ be a subgroup of $\pi_{1}(X,x_0)$ and $n\geq 1$ be
an integer. Let $f:(sd^{n}|\Delta_{2}|)_1\to X$ be a based map on the
1-skeleton of $sd^{n}|\Delta_{2}|$, and $\beta=f|_{|\partial\Delta_{2}|}:|\partial\Delta_{2}|\to X$ be the restriction. If $[f\circ \sigma]\in H$ for
every Spanier edge loop $\sigma$ in $sd^{n}|\Delta_{2}|$, then $[\beta]\in H$.
\end{lemma}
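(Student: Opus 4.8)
The plan is to work in the $1$-skeleton $K=(sd^{n}|\Delta_2|)_1$ and reduce the statement to a purely combinatorial identity in $\pi_1(K,b)$, where $b$ denotes the origin. Since $f$ is based, it induces a homomorphism $f_{\ast}:\pi_1(K,b)\to \pi_1(X,x_0)$. Traversing $|\partial\Delta_2|$ once as an edge loop of $sd^{n}|\Delta_2|$ gives an edge loop $\partial\Delta_2$ whose image under $f$ is a reparametrization of $\beta$, so that $[\beta]=f_{\ast}([\partial\Delta_2])$. Hence it suffices to prove that $[\partial\Delta_2]$ is a finite product of classes of Spanier edge loops in $\pi_1(K,b)$: applying the homomorphism $f_{\ast}$ then expresses $[\beta]$ as a product of classes $[f\circ\sigma]$ with each $\sigma$ a Spanier edge loop, each of which lies in $H$ by hypothesis, so that $[\beta]\in H$ because $H$ is a subgroup.

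The heart of the argument is therefore the claim that, in $\pi_1(K,b)$, the boundary loop $\partial\Delta_2$ is a product of conjugates, by edge paths, of the triangular boundary loops of the $2$-simplices of $sd^{n}|\Delta_2|$. I would deduce this from the standard presentation of the fundamental group of a $2$-complex. The realization $|\Delta_2|$ is obtained from $K$ by attaching one $2$-cell along the boundary edge loop $\partial\tau$ of each $2$-simplex $\tau$; consequently $\pi_1(|\Delta_2|,b)$ is the quotient of $\pi_1(K,b)$ by the normal closure $N$ of the loops $\partial\tau$. As $|\Delta_2|$ is contractible this quotient is trivial, so $N=\pi_1(K,b)$ and in particular $[\partial\Delta_2]\in N$. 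Thus $[\partial\Delta_2]$ is a finite product of conjugates $w\,[\partial\tau]^{\pm1}\,w^{-1}$, and since every homotopy class in $\pi_1(K,b)$ is represented by an edge loop, each conjugating element $w$ may be taken to be the class of an edge loop.

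The step I expect to require the most care is the final bookkeeping: verifying that each normal generator $w\,[\partial\tau]^{\pm1}\,w^{-1}$ is represented by a loop of the exact shape $E\ast L\ast \overline{E}$ demanded by the definition of a Spanier edge loop. Here $L$ is the short edge loop $[v_0,v_1]\ast[v_1,v_2]\ast[v_2,v_3]$ given by the suitably oriented boundary of $\tau$ with $v_3=v_0$ --- noting that the reverse of a short edge loop is again a short edge loop --- and $E$ is the edge path obtained by composing a representative of $w$ with an edge path from $b$ to the chosen base vertex of $\tau$. This identification is routine but essential, since the hypothesis controls only Spanier edge loops of precisely this form. With it in hand, $[\partial\Delta_2]=\prod_j[\sigma_j]$ with each $\sigma_j$ a Spanier edge loop, completing the reduction of the first paragraph.

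As an alternative to invoking the $2$-complex presentation, one could argue the combinatorial claim directly by induction: order the $2$-simplices $\tau_1,\dots,\tau_M$ so that each partial union $\tau_1\cup\cdots\cup\tau_k$ is a disk, and record how the boundary edge loop changes as each triangle is adjoined, every step contributing a single conjugated triangular boundary. This is more self-contained but combinatorially heavier, so I would favor the presentation-based route above.
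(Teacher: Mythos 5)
Your proposal is correct and takes essentially the same approach as the paper: the paper reduces the lemma to the fact that $\pi_{1}((sd^{n}|\Delta_{2}|)_{1},(0,0))$ is generated by the classes of Spanier edge loops (attributed there to elementary planar graph theory) and then applies $f_{\ast}$, exactly as you do. Your only difference is that you substantiate that generating fact explicitly, by identifying the normal closure of the $2$-simplex boundaries with all of $\pi_{1}$ of the $1$-skeleton using the contractibility of $|\Delta_{2}|$, which is a valid (and arguably cleaner) justification of the same combinatorial core.
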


\begin{proof}
It follows from elementary planar graph theory that $\pi_{1}((sd^{n}\dtr)_1,(0,0))$ is generated by the homotopy classes of Spanier edge loops. Thus if $[f\circ \sigma]\in H$ for every Spanier edge loop $\sigma$ in $sd^{n}\dtr$, then $f_{\ast}(\pi_{1}((sd^{n}\dtr)_1,(0,0)))\subseteq H$. In particular $[\beta]\in H$.
\end{proof}
Let $sd^{1}|\partial\Delta_{2}|$ be the first barycentric subdivision of $%
|\partial\Delta_{2}|$. The 0-skeleton consists of six vertices: Let $%
v_1,v_2,v_3$ be the vertices $(0,0),(1,0),(0,1)$ of $|\partial\Delta_{2}|$
respectively and $m_{i}$ be the vertex which is the barycenter of the edge
opposite $v_i$ for $i=1,2,3$.

\begin{definition}
\emph{ Let $\scru$ be an open cover of $X$. A map $%
\delta:|\partial\Delta_{2}|\to X$ is $\scru$\textit{-admissible} if
there are open neighborhoods $U_1,U_2,U_3\in \mathscr{U}$ such that $%
\bigcap_{i}U_i\neq \emptyset$ and $\delta(\overline{St}(v_i,sd^{1}|\partial%
\Delta_{2}|))\subseteq U_i$ for $i=1,2,3$. }
\end{definition}

\begin{remark}
\emph{The condition $\delta(\overline{St}(v_i,sd^{1}|\partial\Delta_{2}|))\subseteq U_i$ in the previous definition means precisely that $\delta([v_1,m_2]\cup [v_1,m_3])\subseteq U_1$, $\delta([v_2,m_1]\cup
[v_2,m_3])\subseteq U_2$, and $\delta([v_3,m_1]\cup [v_3,m_2])\subseteq U_3$%
. }
\end{remark}

\begin{lemma}
\label{uadmissible} Let $\mathscr{U}$ be an open cover of $X$ consisting of
path connected sets. If $\delta:|\partial\Delta_{2}|\to X$ is $\mathscr{U}$%
-admissible and $\alpha:[0,1]\to X$ is a path from $x_0$ to $\delta(v_1)$,
then $[\alpha][\delta][\reva]\in \Pi^{Sp}(\mathscr{U},x_0)$.
\end{lemma}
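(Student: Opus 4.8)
The plan is to prove the sharper statement that $[\alpha][\delta][\reva]$ is literally a product of \emph{three} generators of $\tspu$, obtained by coning the three ``sides'' of the admissible loop $\delta$ to a common point. The only features of the hypotheses I will use are: admissibility supplies sets $U_1,U_2,U_3\in\scru$ with $U_1\cap U_2\cap U_3\neq\emptyset$ and controls where the six arcs of $\delta$ lie; and path-connectedness of each $U_i$ lets me draw auxiliary paths inside $U_i$.

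First I record the arc decomposition forced by admissibility. Reading the preceding Remark, as one traverses $\pdtr$ the loop $\delta$ passes through $\delta(v_1),\delta(m_3),\delta(v_2),\delta(m_1),\delta(v_3),\delta(m_2)$ and back to $\delta(v_1)$, and the six restrictions satisfy: $\delta|_{[v_1,m_3]}$ and $\delta|_{[m_2,v_1]}$ have image in $U_1$; $\delta|_{[m_3,v_2]}$ and $\delta|_{[v_2,m_1]}$ have image in $U_2$; and $\delta|_{[m_1,v_3]}$ and $\delta|_{[v_3,m_2]}$ have image in $U_3$. Thus, up to the obvious reparametrization, $\delta$ is the concatenation $a*b*c*d*e*f$ of these six arcs, where $a,f\subseteq U_1$, $b,c\subseteq U_2$, and $d,e\subseteq U_3$; note also $\delta(v_i)\in U_i$ since $v_i\in \overline{St}(v_i,sd^1\pdtr)$.

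Next, fix a point $q\in U_1\cap U_2\cap U_3$ and, using path-connectedness, choose paths $P_i\colon q\to\delta(v_i)$ inside $U_i$ for $i=1,2,3$. I then form three loops based at $q$:
\[
A=P_1*a*b*\overline{P_2},\qquad B=P_2*c*d*\overline{P_3},\qquad C=P_3*e*f*\overline{P_1}.
\]
The point of this grouping is that each loop splits as a single $U_i$-path followed by a single $U_j$-path: $A=(P_1*a)*(b*\overline{P_2})$ with $P_1*a\subseteq U_1$ and $b*\overline{P_2}\subseteq U_2$, and similarly $B=(P_2*c)*(d*\overline{P_3})$ with factors in $U_2,U_3$ and $C=(P_3*e)*(f*\overline{P_1})$ with factors in $U_3,U_1$. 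Setting $\lambda=\alpha*\overline{P_1}\colon x_0\to q$, so that $[\lambda]\in\tX$, each of $[\lambda][A][\overline{\lambda}]$, $[\lambda][B][\overline{\lambda}]$, $[\lambda][C][\overline{\lambda}]$ is therefore exactly a generator of $\tspu$.

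Finally I assemble. Cancelling the adjacent pieces $\overline{P_2}*P_2$ and $\overline{P_3}*P_3$ shows $A*B*C$ is homotopic rel endpoints to $P_1*\delta*\overline{P_1}$, so $[A][B][C]=[P_1*\delta*\overline{P_1}]$ in $\pi_1(X,q)$; conjugating by $\lambda$ and using $\lambda*P_1\simeq\alpha$ yields $[\lambda][P_1*\delta*\overline{P_1}][\overline{\lambda}]=[\alpha][\delta][\reva]$. Hence $[\alpha][\delta][\reva]=\big([\lambda][A][\overline{\lambda}]\big)\big([\lambda][B][\overline{\lambda}]\big)\big([\lambda][C][\overline{\lambda}]\big)$ is a product of three generators of $\tspu$, as required. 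I expect the only delicate point to be the bookkeeping in this last step: choosing the spokes $P_i$ and grouping the six arcs so that each lune $A,B,C$ is a genuine two-set generator and the conjugating path $\lambda$ telescopes back to $\alpha$. Everything else is routine concatenation and cancellation of paths.
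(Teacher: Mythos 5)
Your proof is correct. It rests on the same two ingredients as the paper's argument --- a point $q\in U_1\cap U_2\cap U_3$ supplied by admissibility, and auxiliary paths joining $q$ to points of $\delta$ inside the $U_i$ supplied by path connectedness --- but the factorization you extract is genuinely different and more economical. The paper connects the three midpoint images $\delta(m_k)$ to the common point via paths $\eta_{j,k}$ lying in $U_j$ and writes $[\alpha][\delta][\reva]$ as a product of six factors $\zeta_1\cdots\zeta_6$, three of which are ordinary Spanier generators (corner loops contained in a single $U_j$) and three of which are two-set generators used to switch between the two $\eta$'s meeting at each midpoint. You instead run spokes $P_i$ from $q$ to the vertex images $\delta(v_i)$ and group the six arcs into three lunes $A$, $B$, $C$, each straddling one midpoint and hence splitting as a $U_i$-path followed by a $U_j$-path; conjugating by $\lambda=\alpha\ast\overline{P_1}$ exhibits $[\alpha][\delta][\reva]$ as a product of exactly three generators of $\tspu$. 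Your version buys a sharper quantitative statement (three generators rather than six factors) and avoids the case-by-case verification of which factors lie in $\spu$ versus $\tspu$; the paper's version makes the role of the single-set corner loops slightly more visible. Your arc bookkeeping ($a,f\subseteq U_1$, $b,c\subseteq U_2$, $d,e\subseteq U_3$ for the traversal $v_1,m_3,v_2,m_1,v_3,m_2,v_1$) matches the Remark, the hypotheses are used exactly where needed, and the telescoping cancellation in the last step checks out.
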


\begin{proof}
Since $\delta:\pdtr\to X$ is $\scru$-admissible there are sets $U_1,U_2,U_3\in \scru$ such that $\bigcap_{i}U_i$ contains a point $z$ and $\delta(\overline{St}(v_i,\pdtr))\subseteq U_i$ for $i=1,2,3$.

Whenever $j,k\in \{1,2,3\}$ and $j\neq k$, let $\gamma_{j,k}$ be the path which is the restriction of $\delta$ to $[v_j,m_k]$. With this notation \[[\delta]= [\gamma_{1,3}][\overline{\gamma_{2,3}}][\gamma_{2,1}][\overline{\gamma_{3,1}}][\gamma_{3,2}] [\overline{\gamma_{1,2}}].\] Note $Im(\gamma_{j,k})\subseteq U_j$ for all choice of $j,k$. Thus if $j,k,l\in \{1,2,3\}$ are distinct, the endpoint of $\gamma_{j,k}$ lies in $U_j\cap U_l$.

Define six paths in $X$: Whenever $j,k\in \{1,2,3\}$ and $j\neq k$, let $\eta_{j,k}$ be a path in $U_j$ from $\delta(m_k)$ to $z$. Such paths are guaranteed to exist since each element of $\scru$ is path connected. Now given any path $\alpha:[0,1]\to X$ from $x_0$ to $\delta(v_1)$ let 
\[\begin{array}{rcl}
\zeta_{1} & = & [\alpha][\gamma_{1,3}\ast\eta_{1,3}\ast\overline{\eta_{1,2}}\ast\overline{\gamma_{1,2}}][\reva]\\
\zeta_{2} & = & [\alpha\ast\gamma_{1,2}\ast\eta_{1,2}][\overline{\eta_{1,3}}][\eta_{2,3}][\overline{\alpha\ast\gamma_{1,2}\ast \eta_{1,2}}]\\
\zeta_{3} & = & [\alpha\ast\gamma_{1,2}\ast\eta_{1,2}][\overline{\eta_{2,3}}\ast\overline{\gamma_{2,3}}\ast\gamma_{2,1}\ast\eta_{2,1}] [\overline{\alpha\ast\gamma_{1,2}\ast\eta_{1,2}}] \\
\zeta_{4} & = & [\alpha\ast\gamma_{1,2}\ast\eta_{1,2}][\overline{\eta_{2,1}}][\eta_{3,1}] [\overline{\alpha\ast\gamma_{1,2}\ast\eta_{1,2}}]\\
\zeta_{5} & = &  [\alpha\ast\gamma_{1,2}\ast\eta_{1,2}][\overline{\eta_{3,1}}\ast\overline{\gamma_{3,1}}\ast\gamma_{3,2}\ast\eta_{3,2}] [\overline{\alpha\ast\gamma_{1,2}\ast\eta_{1,2}}]\\
\zeta_{6} & = & [\alpha\ast\gamma_{1,2}][\eta_{1,2}][\overline{\eta_{3,2}}] [\overline{\alpha\ast\gamma_{1,2}}]
\end{array}\]
Note each $\zeta_i$ is written as a product to illustrate that $\zeta_1$, $\zeta_3$, $\zeta_5\in \spu$ and $\zeta_2$, $\zeta_4$, $\zeta_6\in \tspu$. Since $\spu\subseteq \tspu$, we have $\zeta_i\in \tspu$ for each $i$. A straightforward check gives $[\alpha][\delta][\reva]=\zeta_1\zeta_2\zeta_3\zeta_4\zeta_5\zeta_6$ and thus $[\alpha][\delta][\reva]\in \tspu$.
\end{proof}

\begin{lemma}
\label{admissibletheorem} If $f:(sd^{n}|\Delta_{2}|)_1\to X$ is a map such
that the restriction of $f$ to the boundary $\partial\tau$ of every
2-simplex $\tau$ in $sd^{n}|\Delta_{2}|$ is $\mathscr{U}$-admissible and $\beta=f|_{|\partial\Delta_{2}|}$, then $[\beta]\in \Pi^{Sp}(\mathscr{U},x_0)$.
\end{lemma}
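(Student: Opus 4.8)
The plan is to invoke the Fishnet Lemma (Lemma~\ref{fishnet}) with the subgroup $H=\tspu$. Since $f$ is a based map on $(sd^{n}\dtr)_1$ and $\beta=f|_{\pdtr}$, it suffices to check that $[f\circ\sigma]\in\tspu$ for every Spanier edge loop $\sigma$ in $sd^{n}\dtr$; the conclusion $[\beta]\in\tspu$ is then immediate.

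Fix such a $\sigma=E\ast L\ast\overline{E}$, where $E$ is an edge path from the basepoint $(0,0)$ to the initial vertex $v_0$ of the short edge loop $L=[v_0,v_1]\ast[v_1,v_2]\ast[v_2,v_0]$. Writing $\alpha=f\circ E$, a path from $x_0$ to $f(v_0)$, and $\delta=f\circ L$, we have $[f\circ\sigma]=[\alpha][\delta][\reva]$, so the goal becomes to place this class in $\tspu$.

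The first step is to classify $L$. Each of its three factors is either a genuine $1$-simplex or a constant path. If two of $v_0,v_1,v_2$ coincide, then $L$ has the form of an edge traversed and immediately reversed (together with a constant path), so $\delta$ is inessential and $[\alpha][\delta][\reva]=1\in\tspu$. Otherwise $v_0,v_1,v_2$ are distinct and all three factors are genuine edges, so $L$ is a $3$-cycle in $(sd^{n}\dtr)_1$. Here I would use the poset structure of a barycentric subdivision: two barycenters are joined by an edge exactly when the corresponding simplices are comparable under the face relation, so three pairwise-adjacent vertices yield a chain of simplices and hence span a single $2$-simplex $\tau$. Thus every non-degenerate short edge loop is the boundary $\partial\tau$ of a $2$-simplex.

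For such an $L$, the hypothesis says $f|_{\partial\tau}$ is $\scru$-admissible. Since $\scru$-admissibility is a symmetric condition on the three corners of the triangle---unchanged by the choice of starting vertex and orientation---the loop $\delta=f\circ L:\pdtr\to X$ is itself $\scru$-admissible, with $v_0$ in the role of the base vertex. Lemma~\ref{uadmissible} (valid because $\scru$ consists of path connected sets) then gives $[\alpha][\delta][\reva]\in\tspu$ for the path $\alpha$ from $x_0$ to $f(v_0)$. Both cases yield $[f\circ\sigma]\in\tspu$, so the Fishnet Lemma finishes the argument. I expect the main obstacle to be reconciling the unoriented, basepoint-free $\scru$-admissibility delivered by the hypothesis with the oriented, based short edge loops required by the Fishnet Lemma; this is handled by the symmetry of admissibility together with the observation that each short edge loop either degenerates or bounds a unique $2$-simplex.
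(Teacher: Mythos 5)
Your proof is correct and follows essentially the same route as the paper's: reduce via Lemma \ref{fishnet} to Spanier edge loops $E\ast L\ast\overline{E}$ and then apply Lemma \ref{uadmissible} to $\delta=f\circ L$. The additional care you take with degenerate short edge loops, with the fact that every non-degenerate $3$-cycle in a barycentric subdivision bounds a $2$-simplex, and with the basepoint/orientation symmetry of $\mathscr{U}$-admissibility fills in details the paper's proof leaves implicit.
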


\begin{proof}
By Lemma \ref{fishnet} it suffices to show $[f\circ\sigma]\in \tspu$ for any Spanier edge loop $\sigma$ in $sd^{n}\dtr$. Suppose $\sigma=E\ast L\ast \overline{E}$ is such a Spanier edge loop where $E$ is an edge path from $(0,0)$ to a vertex of $\partial\tau$ and $L$ is a short edge loop traversing $\partial\tau$. Let $\alpha=f\circ E$ and $\delta=f\circ L$. Since $\delta$ is $\scru$-admissible by assumption, $[f\circ \sigma]=[\alpha][\delta][\reva]\in\tspu$ by Lemma \ref{uadmissible}.
\end{proof}

\begin{lemma}
\label{technical} Suppose $(\mathscr{U},U_0)\in \Lambda$ where $\mathscr{U}$ consists of path connected sets and $p_{\mathscr{U}}:X\to |N(\mathscr{U})|$ is a canonical map. If $\beta:|\partial\Delta_{2}|\to X$ is a loop such that $[\beta]\in \ker p_{\mathscr{U}\ast}$, then there exists an
integer $n\geq 1$ and an extension $f:(sd^n|\Delta_{2}|)_1\to X$ of $\beta$
such that the restriction of $f$ to the boundary $\partial\tau$ of every
2-simplex $\tau$ in $sd^{n}|\Delta_{2}|$ is $\mathscr{U}$-admissible.
\end{lemma}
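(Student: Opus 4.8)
The plan is to use the fact that $[\beta]\in\ker p_{\scru\ast}$ to produce a filling of $p_{\scru}\circ\beta$ in the nerve, then pull this combinatorial data back to $X$ via the open cover $\scrv=\{V_U=p_{\scru}^{-1}(St(U,\nerveu))\mid U\in\scru\}$. First I would record that $L=p_{\scru}\circ\beta$ is a nullhomotopic loop in $|\nerveu|$ based at $U_0$. Since $|\nerveu|$ is a simplicial complex, a nullhomotopy can be realized simplicially: after choosing $n$ large enough, there is a simplicial map $g:sd^{n}|\Delta_2|\to |\nerveu|$ (for an appropriate simplicial structure on the disk) whose restriction to $|\pdt|$ agrees, up to edge-homotopy, with an edge-loop representing $L$. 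The key point is that $g$ sends each vertex of $sd^n|\Delta_2|$ to a vertex of $\nerveu$, i.e.\ to some element of $\scru$, and sends each $2$-simplex into a single (possibly degenerate) $2$-simplex of $\nerveu$, meaning its three vertices $U_1,U_2,U_3\in\scru$ satisfy $\bigcap_i U_i\neq\emptyset$.

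Next I would use this simplicial filling $g$ to guide the construction of the extension $f:(sd^n|\Delta_2|)_1\to X$. The idea is to define $f$ one simplex at a time. On the boundary $|\pdt|$ I need $f=\beta$, so first I would arrange, by increasing $n$ and adjusting on the boundary via a $\scrv$-subdivision of $\beta$, that for each boundary vertex $w$ the point $\beta(w)$ lies in $V_{g(w)}\subseteq g(w)$; this is possible because $V_U=p_{\scru}^{-1}(St(U,\nerveu))$ and $\beta(w)$ maps under $p_{\scru}$ into the star of the vertex $g(w)$. For each interior vertex $w$ of $sd^n|\Delta_2|$, I would choose $f(w)$ to be any point of the element $g(w)\in\scru$; since $g$ is simplicial, whenever two vertices $w,w'$ are joined by an edge the sets $g(w),g(w')$ intersect, so I can connect $f(w)$ to $f(w')$ by a path lying in $g(w)\cup g(w')$—in fact inside whichever of the two sets is appropriate using path connectedness of elements of $\scru$. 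Defining $f$ on each edge to be such a path (and keeping $f=\beta$ on boundary edges, subdivided to respect the vertex assignment) yields a based map on the $1$-skeleton extending $\beta$.

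Finally I would verify the $\scru$-admissibility of $f|_{\partial\tau}$ for every $2$-simplex $\tau$ of $sd^n|\Delta_2|$. Let $\tau$ have vertices $w_1,w_2,w_3$ and set $U_i=g(w_i)\in\scru$. Because $g$ is simplicial, $\bigcap_i U_i\neq\emptyset$, which is the intersection condition in the definition of $\scru$-admissibility. By construction each edge of $\partial\tau$ is mapped by $f$ into the union of the two relevant vertex-sets, and each half-edge abutting the vertex $w_i$ (i.e.\ the portion of $\partial\tau$ in the closed star $\overline{St}(w_i,sd^1\pdtr)$ after identifying $\partial\tau$ with $\pdtr$) is arranged to lie in $U_i$; this is exactly the requirement $f(\overline{St}(v_i,sd^1\pdtr))\subseteq U_i$. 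Matching the three vertices $w_1,w_2,w_3$ of $\tau$ with the labels $v_1,v_2,v_3$ then exhibits $f|_{\partial\tau}$ as $\scru$-admissible. The main obstacle I anticipate is the bookkeeping in the second step: producing the simplicial filling $g$ and simultaneously guaranteeing that the already-fixed boundary values $f=\beta$ are compatible with the chosen vertex labels $g(w)$ on $|\pdt|$. This requires choosing $n$ large enough that $\beta$ is finely subdivided relative to $\scrv$, and then fixing up the edge-path representing $L$ on the boundary so that it genuinely equals $p_{\scru}\circ\beta$ up to the homotopy already built into the surjectivity argument (Lemma \ref{surjectivity}); reconciling these two requirements—the intrinsic boundary data from $\beta$ and the combinatorial filling from the nerve—is where the real care is needed.
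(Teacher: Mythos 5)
Your overall strategy is the same as the paper's: fill the null-homotopic loop $p_{\mathscr{U}}\circ\beta$ in the nerve, convert the filling into a labelling of the vertices of $sd^{n}|\Delta_{2}|$ by elements of $\mathscr{U}$ so that the three labels of any $2$-simplex have a common point, and then build $f$ on the $1$-skeleton using path connectedness of the labels. The gap is at the point you yourself flag: the compatibility of the fixed boundary data $f|_{|\partial\Delta_{2}|}=\beta$ with the vertex labels. As written, your simplicial filling $g$ only agrees with an edge-loop representing $L=p_{\mathscr{U}}\circ\beta$ \emph{up to edge-homotopy} on the boundary, and from that you cannot conclude that $p_{\mathscr{U}}(\beta(w))\in St(g(w),N(\mathscr{U}))$ for boundary vertices $w$ --- which is exactly what you invoke to place $\beta(w)$ in $V_{g(w)}\subseteq g(w)$. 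Nor can you ``adjust $\beta$ on the boundary'': the lemma requires $f$ to literally extend $\beta$. Moreover, for a boundary $1$-simplex $[a,b]$ you are forced to set $f|_{[a,b]}=\beta|_{[a,b]}$, and $\mathscr{U}$-admissibility of the adjacent $2$-simplex needs the two halves of that edge to land in $U_a$ and $U_b$ respectively; for this you need $\beta(\mathrm{int}([a,b]))\subseteq U_a\cap U_b$, a condition your construction does not deliver.

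The fix --- and the paper's route --- is to take an honest continuous extension $h:|\Delta_{2}|\to|N(\mathscr{U})|$ of $p_{\mathscr{U}}\circ\beta$ (it exists precisely because $[\beta]\in\ker p_{\mathscr{U}\ast}$), pull back the star cover to $\mathscr{W}=\{h^{-1}(St(U,N(\mathscr{U})))\mid U\in\mathscr{U}\}$, choose $n$ by a Lebesgue number argument so that $St(a,sd^{n}|\Delta_{2}|)\subseteq h^{-1}(St(U_a,N(\mathscr{U})))$ for each vertex $a$, and let the labelling $a\mapsto U_a$ be this simplicial approximation in the star-condition sense. Then for a boundary vertex $a$ one gets $p_{\mathscr{U}}(\beta(a))=h(a)\in St(U_a,N(\mathscr{U}))$, so $\beta(a)\in V_{U_a}\subseteq U_a$, and for a boundary edge $[a,b]$ one gets $\beta(\mathrm{int}([a,b]))\subseteq V_{U_a}\cap V_{U_b}\subseteq U_a\cap U_b$, which is exactly the half-edge condition needed for admissibility of the boundary triangles. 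With that substitution, your treatment of interior vertices, interior edges, and the verification of admissibility goes through as you describe.
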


\begin{proof}
Since $p_{\scru}\circ \beta:\pdtr\to |\nerveu|$ is a null-homotopic loop based at the vertex $U_0$, it extends to a map $h:\dtr\to |\nerveu|$. Since, by the definition of $p_{\scru}$, we have $V_U=p_{\scru}^{-1}(St(U,\nerveu))\subseteq U$ for each $U\in \scru$, the cover $\scrv=\{V_U|U\in\scru\}$ is an open refinement of $\scru$. Additionally, if $W_U=h^{-1}(St(U,\nerveu))$, the collection $\mathscr{W}=\{W_U|U\in \scru\}$ is an open cover of $\dtr$. 

Following Theorem 16.1 (finite simplicial approximation) in \cite{Mu84}, find a simplicial approximation for $h$ using the cover $\mathscr{W}$: let $\lambda$ be a Lebesgue number for $\mathscr{W}$ so that any subset of $\dtr$ of diameter less than $\lambda$ lies in some element of $\mathscr{W}$. Choose an integer $n$ such that each simplex in $sd^{n}\dtr$ has diameter less than $\lambda/2$. Thus the star $St(a,sd^{n}\dtr)$ of each vertex $a$ in $sd^{n}\dtr$ lies in a set $W_{U_a}$ for some $U_a\in \scru$. The assignment $a\mapsto U_a$ on vertices extends to a simplicial approximation $h':sd^{n}\dt\to \nerveu$ of $h$, i.e. a simplicial map $h'$ such that \[h(St(a,sd^{n}\dt))\subseteq St(h'(a),\nerveu)=St(U_a,\nerveu)\] for each vertex $a$ \cite[Lemma 14.1]{Mu84}.

We construct an extension $f:(sd^{n}\dtr)_1\to X$ of $\beta$ such that the restriction of $f$ to the boundary of each 2-simplex of $sd^{n}\dtr$ is $\scru$-admissible. First, define $f$ on vertices: for each vertex $a\in (sd^{n}\dtr)_0$, pick a point $f(a)\in U_a$. In particular, if $a$ is a vertex of the subcomplex $sd^{n}\pdtr$ of $(sd^{n}\dtr)_{1}$, take $f(a)=\beta(a)$. This choice is well defined since whenever $a\in sd^{n}\pdtr$, we have $p_{\scru}\circ \beta(a)=h(a)\in St(U_a,\nerveu)$ and thus $\beta(a)\in V_{U_a}\subseteq U_a$.

Observe if $[a,b,c]$ is any 2-simplex in $sd^{n}\dt$, then $h'([a,b,c])$ is a simplex of $\nerveu$ spanned by the set of vertices $\{U_a,U_b,U_c\}$ (possibly containing repetitions). By the construction of $\nerveu$, the intersection $U_a\cap U_b\cap U_c$ is non-empty. 

Extend $f$ to the interiors of 1-simplices of $(sd^{n}|\Delta_{2}|)_1$; to define $f$ on a given 1-simplex $[a,b]$, consider two cases:

Case I: Suppose 1-simplex $[a,b]$ is the intersection of two 2-simplices in $sd^{n}|\Delta_{2}|$. Extend $f$ to the interior of $[a,b]$  by taking $f|_{[a,b]}$ to be a path in $U_a\cup U_b$ from $f(a)$ to $f(b)$ such that if $m$ is the barycenter of $[a,b]$, then $f|_{[a,b]}([a,m])\subseteq U_a$ and $f|_{[a,b]}([m,b])\subseteq U_b$. Such a path is guaranteed to exist since $U_a$, $U_b$ are path connected and have non-trivial intersection.

Case II: Suppose 1-simplex $[a,b]$ is the face of a single 2-simplex or equivalently that $[a,b]$ is a 1-simplex of $sd^{n}\pdtr$. In this case, let $f|_{[a,b]}=\beta_{[a,b]}$. Again, note $U_a\cap U_b\cap U_c\neq \emptyset$. Since $St(a,sd^{n}\dtr)\subseteq W_{U_a}$ and $St(b,sd^{n}\dtr)\subseteq W_{U_b}$, we have \[p_{\scru}\circ \beta(int([a,b]))=h(int([a,b]))\subseteq St(U_a,\nerveu)\cap St(U_{b},\nerveu).\]Thus $\beta(int([a,b]))\subseteq V_{U_a}\cap V_{U_b}\subseteq U_a\cap U_{b}$. Note if $m$ is the barycenter of $[a,b]$, then $f([a,m])\subseteq U_a$ and $f([m,b])\subseteq U_b$. 

Now $f:(sd^{n}\dtr)_1\to X$ is a map extending $\beta$ such that the restriction of $f$ to the boundary $\partial\tau$ of every 2-simplex $\tau$ in $sd^{n}\dtr$ is $\scru$-admissible.
\end{proof}

\begin{theorem}
\label{exactness3} If $(\mathscr{U},U_0)\in \Lambda$ where $\mathscr{U}$ consists of path connected sets and $p_{\mathscr{U}}:X\to |N(%
\mathscr{U})|$ is a canonical map, then $\ker p_{\mathscr{U}\ast}\subseteq
\Pi^{Sp}(\mathscr{U},x_0)$.
\end{theorem}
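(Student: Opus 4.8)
The plan is to combine the two immediately preceding lemmas, which together reduce the desired containment to the purely formal act of extending a boundary loop over a subdivided triangle. First I would fix an arbitrary element of $\ker p_{\scru\ast}$ and, using the identification of loops with maps $\pdtr\to X$ from Section 2, represent it by a based loop $\beta:\pdtr\to X$ sending the basepoint to $x_0$, so that $[\beta]\in\ker p_{\scru\ast}$.

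Second, I would invoke Lemma \ref{technical}. Its hypotheses---$(\scru,U_0)\in\Lambda$, $\scru$ consisting of path connected sets, and $p_{\scru}$ canonical---are exactly the standing hypotheses of the present theorem, and $[\beta]\in\ker p_{\scru\ast}$ by the choice of $\beta$. Hence there is an integer $n\geq 1$ and an extension $f:(sd^{n}\dtr)_1\to X$ of $\beta$ whose restriction to the boundary $\partial\tau$ of every $2$-simplex $\tau$ of $sd^{n}\dtr$ is $\scru$-admissible.

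Third, with such an $f$ in hand I would apply Lemma \ref{admissibletheorem} verbatim: its hypothesis is precisely that $f$ is a map on $(sd^{n}\dtr)_1$ whose restriction to each $\partial\tau$ is $\scru$-admissible, and its conclusion is $[\,f|_{\pdtr}\,]\in\tspu$. Since $f$ extends $\beta$, we have $f|_{\pdtr}=\beta$, so $[\beta]\in\tspu$, which establishes $\ker p_{\scru\ast}\subseteq\tspu$ and completes the proof.

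I expect the main obstacle not to reside in this final assembly, which is essentially bookkeeping, but in the content already isolated in the two lemmas being combined. The genuine work lives in Lemma \ref{technical}, where the null-homotopy of $p_{\scru}\circ\beta$ in $|\nerveu|$ must be pulled back through a simplicial approximation after passing to a sufficiently fine subdivision $sd^{n}\dtr$, and one must check that the vertex assignment $a\mapsto U_a$ can be realized by choices $f(a)\in U_a$ that agree with $\beta$ on $sd^{n}\pdtr$ and can be joined by paths keeping each triangle boundary $\scru$-admissible. The complementary difficulty is in Lemma \ref{uadmissible} (consumed inside Lemma \ref{admissibletheorem}), which furnishes the explicit factorization of an $\scru$-admissible triangle loop $[\alpha][\delta][\reva]$ into a product $\zeta_1\zeta_2\cdots\zeta_6$ of ordinary and thick Spanier generators. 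Once those two ingredients are granted, the present theorem follows immediately.
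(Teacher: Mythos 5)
Your proposal is correct and matches the paper's proof exactly: the paper also derives Theorem \ref{exactness3} directly by combining Lemma \ref{technical} (to produce the admissible extension $f$ of $\beta$) with Lemma \ref{admissibletheorem} (to conclude $[\beta]\in\tspu$). Your additional commentary on where the real work lies is accurate but not needed for the proof itself.
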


\begin{proof}
The theorem follows directly from Lemma \ref{admissibletheorem} and Lemma \ref{technical}.
\end{proof}

\section{Characterizations of $\ker\Psi_X$}

Theorem \ref{shortexactsequence} allows us to characterize the kernel of the
natural map $\Psi_{X}:\pi_{1}(X,x_0)\to \check{\pi}_{1}(X,x_0)$ as the
Spanier group $\pi^{Sp}(X,x_0)$.

\begin{theorem}
\label{leftexactsequence} If $X$ is a locally path connected, paracompact
Hausdorff space, then there is a natural exact sequence 
\begin{equation*}
\xymatrix{ 1 \ar[r] & \spx \ar[r] & \pionex \ar[r]^-{\Psi_{X}} & \shape}
\end{equation*}
where the second map is inclusion.
\end{theorem}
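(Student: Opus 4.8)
The plan is to reduce the exactness of the sequence to the single equation $\ker\Psi_X=\spx$; naturality will then follow from the functoriality of the \v{C}ech expansion together with Remark \ref{naturalityofspanier}. Since $\shape$ is by definition the inverse limit $\varprojlim(\pi_1(|\nerveu|,U_0),p_{\scru\scrv\ast},\Lambda)$ and $\Psi_X([\alpha])=([p_{\scru}\circ\alpha])$, an element of this limit is trivial precisely when each of its coordinates is, so
\[\ker\Psi_X=\bigcap_{(\scru,U_0)\in\Lambda}\ker p_{\scru\ast}.\]
The goal is to rewrite this intersection, via the per-cover identification $\ker p_{\scru\ast}=\tspu$ supplied by Theorem \ref{shortexactsequence}, as $\tspx$, and then invoke Theorem \ref{paracompact1} to replace $\tspx$ by $\spx$.

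The crux is a cofinality statement. Writing $\Lambda'$ for the set of $(\scrv,V_0)\in\Lambda$ for which $\scrv$ consists of path connected sets, I claim every $(\scru,U_0)\in\basedox$ is refined by some member of $\Lambda'$. To see this I would first use cofinality of $\Lambda$ (already recorded for paracompact Hausdorff $X$) to refine $(\scru,U_0)$ to a cover $(\scru',U_0')\in\Lambda$ in which $U_0'$ is the only element containing $x_0$. Because $X$ is locally path connected, the path components of each member of $\scru'$ are open; let $\scrv$ be the cover of $X$ by all such path components and let $V_0$ be the path component of $U_0'$ containing $x_0$. Then $\scrv$ refines $\scru'$, consists of path connected sets, and $V_0$ is still the unique member of $\scrv$ containing $x_0$ (any member containing $x_0$ is a path component of a member of $\scru'$ containing $x_0$, hence of $U_0'$). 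Since $X$ is paracompact Hausdorff, $\scrv$ is normal, and the uniqueness of $V_0$ forces $\phi_{V_0}(x_0)=1$ for every subordinated partition of unity; thus $(\scrv,V_0)\in\Lambda'$ refines $(\scru,U_0)$.

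With the cofinality claim in hand I would assemble the identification using two applications of it. Because refinement shrinks both kernels and thick Spanier groups (the projection $p_{\scrv\ast}$ factors through $p_{\scru\ast}$, and $\tspv\subseteq\tspu$ by Proposition \ref{tspanier1}), cofinality of $\Lambda'$ in $\Lambda$ lets me replace the intersection over $\Lambda$ by that over $\Lambda'$, while cofinality of the underlying covers of $\Lambda'$ in $\ox$ lets me recognize $\bigcap_{\Lambda'}\tspu$ as the full thick Spanier group $\tspx$. Combining these with Theorem \ref{shortexactsequence}, which applies on $\Lambda'$ precisely because there the covers are path connected, normal, and based, yields
\[\ker\Psi_X=\bigcap_{(\scru,U_0)\in\Lambda}\ker p_{\scru\ast}=\bigcap_{(\scru,U_0)\in\Lambda'}\ker p_{\scru\ast}=\bigcap_{(\scru,U_0)\in\Lambda'}\tspu=\tspx.\]
Finally, since a Hausdorff space is $T_1$, Theorem \ref{paracompact1} gives $\tspx=\spx$, completing the identification $\ker\Psi_X=\spx$. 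I expect the cofinality bookkeeping of the middle paragraph to be the main obstacle: Theorem \ref{shortexactsequence} applies only to path connected, normal, based covers, whereas $\tspx$ is an intersection over \emph{all} covers and $\ker\Psi_X$ an intersection over $\Lambda$, so both local path connectedness (to produce path connected refinements) and paracompactness (for normality and for cofinality of $\Lambda$) are genuinely needed to reconcile the three index sets.
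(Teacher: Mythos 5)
Your proposal is correct and follows essentially the same route as the paper: reduce to the identity $\ker\Psi_{X}=\pi^{Sp}(X,x_{0})$, use local path connectedness and paracompactness to produce a cofinal family of normal, based covers by path connected sets on which Theorem \ref{shortexactsequence} identifies $\ker p_{\mathscr{U}\ast}$ with $\Pi^{Sp}(\mathscr{U},x_{0})$, and then apply Theorem \ref{paracompact1} to pass from the thick Spanier group to the ordinary one. The only (harmless) cosmetic differences are that you build the path connected refinement from path components rather than chosen neighborhoods, and you run both inclusions through the equality of Theorem \ref{shortexactsequence} instead of quoting the general containment $\pi^{Sp}(X,x_{0})\subseteq\ker\Psi_{X}$ separately.
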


\begin{proof}
It suffices to show $\ker\Psi_{X}=\spx$. The inclusion $\spx \subseteq \ker \Psi_{X}$ holds for arbitrary $X$; this fact follows from Propositions \ref{spanierinclusion} and \ref{inclusion1} and is proved directly in \cite{FZ07}. Since $X$ is paracompact Hausdorff, $\spx=\tspx$ by Theorem \ref{paracompact1}. Thus, it suffices to show $\ker\Psi_{X}\subseteq \tspx$. 

Suppose $[\beta]\in \ker\Psi_{X}$ (equivalently $[\beta]\in \ker p_{\scrw\ast}$ for every $(\scrw,W_0)\in \Lambda$) and $\scru$ is a given open cover of $X$. It suffices to show $[\beta]\in \tspu$. Let $V_0$ be a path connected neighborhood of $x_0$ contained in $W_0$. Every point $x\in X\backslash V_0$ is an element of some $U_x\in \scru$. Let $V_x$ be a path connected neighborhood of $x$ contained in $U_x$ which is disjoint from $\{x_0\}$. Now $\scrv=\{V_0\}\cup\{V_x|x\in X\backslash V_0\}$ is an open cover of $X$ which contains path connected sets, is a refinement of $\scru$, and such that there is a single set $V_0$ having the basepoint $x_0$ as an element. Since $X$ is paracompact Hausdorff, $\scrv$ is normal and therefore $(\scrv,V_0)\in \Lambda$.

Since $[\beta]\in \ker p_{\scrv\ast}$ and $\ker p_{\scrv\ast}\subseteq \tsp(\scrv,x_0)$ by Theorem \ref{shortexactsequence}, we have $[\beta]\in \tsp(\scrv,x_0)$. Note that $\tsp(\scrv,x_0)\subseteq \tspu$ since $\scrv$ refines $\scru$. Thus $[\beta]\in \tspu$.

Regarding naturality, it is well-known that $\Psi_{X}$ is natural in $X$. Thus it suffices to check that if $f:X\to Y$, $f(x_0)=y_0$ is a map, then $f_{\ast}(\spx)\subseteq \pi^{Sp}(Y,y_0)$. This follows directly from Remark \ref{naturalityofspanier}.
\end{proof}
%
%

\begin{corollary}
\label{shapevanishing} If $X$ is a locally path connected, paracompact
Hausdorff space, then $X$ is $\pi_1$-shape injective if and only if $%
\pi^{Sp}(X,x_0)=1$.
\end{corollary}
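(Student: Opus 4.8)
The plan is to read the corollary directly off of Theorem \ref{leftexactsequence}, since the two hypotheses---locally path connected and paracompact Hausdorff---are exactly those under which that theorem supplies the exact sequence
\begin{equation*}
\xymatrix{ 1 \ar[r] & \spx \ar[r] & \pionex \ar[r]^-{\Psi_{X}} & \shape}
\end{equation*}
with the left map being inclusion. The single crucial observation is that exactness of this sequence at the middle term $\pi_1(X,x_0)$, combined with the fact that the map $\spx \to \pionex$ is the inclusion, says precisely that $\ker \Psi_X = \spx$ as subgroups of $\pi_1(X,x_0)$.

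First I would recall the definition of $\pi_1$-shape injectivity from Section 2: by definition, $X$ is $\pi_1$-shape injective exactly when $\Psi_X$ is injective, equivalently when $\ker \Psi_X = 1$. Next, invoking Theorem \ref{leftexactsequence} under the stated hypotheses, I would substitute the identification $\ker \Psi_X = \spx$ into this criterion. Chaining the resulting equivalences gives that $X$ is $\pi_1$-shape injective if and only if $\ker \Psi_X = 1$ if and only if $\spx = 1$, which is the asserted statement.

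There is essentially no obstacle to overcome here: all of the topological and combinatorial work has already been carried out in establishing Theorem \ref{leftexactsequence}, and this corollary is a purely formal translation of the equality $\ker \Psi_X = \spx$ into the vocabulary of shape injectivity. The only point requiring mild care is to keep the chain of \emph{if and only if} statements correctly aligned with the definition of $\pi_1$-shape injectivity, so that the triviality of $\ker \Psi_X$ and the triviality of $\spx$ are matched up through the identification furnished by the theorem.
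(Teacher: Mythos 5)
Your proposal is correct and matches the paper's (implicit) argument exactly: the corollary is stated without proof precisely because it is the formal consequence of the identification $\ker\Psi_X=\pi^{Sp}(X,x_0)$ in Theorem \ref{leftexactsequence} together with the definition of $\pi_1$-shape injectivity as $\ker\Psi_X=1$. No further comment is needed.
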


\begin{example}
\label{example1} \emph{\ Theorem \ref{leftexactsequence} fails to hold in
the non-locally path connected case. A counterexample is the compact space $Z\subset\mathbb{R}^{3}$ of \cite{FRVZ11} obtained by rotating the closed
topologist's sine curve so the linear path component forms a cylinder and
connecting the two resulting surface components by attaching a single
arc. We have $\pi_{1}(Z,z_0)\cong \mathbb{Z}$ and $\pi^{Sp}(Z,z_0)=1$, yet $
\ker\Psi_{Z}=\pi_{1}(Z,z_0)$. }
\end{example}

The identification $\pi^{Sp}(X,x_0)=\ker\Psi_X$ in Theorem \ref%
{leftexactsequence} allows us to give two alternative characterizations of $%
\ker\Psi_X$: one in terms of $\mathscr{U}$-homotopy (from Section \ref{sectionuhomotopy}) and one in terms of covering spaces of $X$. Here covering spaces and maps are meant in the classical sense \cite{Spanier66}.

\begin{corollary}
\label{uhomotopycharacterization} Suppose $X$ is a locally path connected,
paracompact Hausdorff space and $\alpha:[0,1]\to X$ is a loop based at $x_0$. The following are equivalent:

\begin{enumerate}
\item $[\alpha]\in\ker\Psi_{X}$

\item For every open cover $\mathscr{U}$ of $X$, $\alpha$ is null-$%
\mathscr{U}$-homotopic.

\item For every covering map $r:Y\to X$, $r(y_0)=x_0$, the unique lift $%
\tilde{\alpha}:[0,1]\to Y$ of $\alpha$ (i.e. such that $r\circ\tilde{\alpha}%
=\alpha$) starting at $y_0$ is a loop in $Y$.
\end{enumerate}
\end{corollary}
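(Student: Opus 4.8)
The plan is to prove the two equivalences $(1)\Leftrightarrow(2)$ and $(1)\Leftrightarrow(3)$ separately, in each case routing through the identification $\ker\Psi_X=\spx=\tspx$. The first equality holds by Theorem \ref{leftexactsequence} (using that $X$ is locally path connected and paracompact Hausdorff) and the second by Theorem \ref{paracompact1}. Thus throughout it suffices to characterize membership in $\tspx=\bigcap_{\scru}\tspu$ in each of the two languages. I will also use repeatedly that, since $X$ is locally path connected, every open cover admits a refinement by path connected open sets.

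For $(1)\Leftrightarrow(2)$ the key inputs are Theorem \ref{characterizethickspangrp}, which gives $\tspu=\nulhu$ whenever $\scru$ consists of path connected sets, and the elementary monotonicity recorded just after the definition of $\simu$: if $\scrv$ refines $\scru$, then $\alpha\simeq_{\scrv}\beta$ implies $\alpha\simequ\beta$, so null-$\scrv$-homotopic implies null-$\scru$-homotopic. For $(1)\Rightarrow(2)$, given an arbitrary cover $\scru$, I refine it to a path connected cover $\scrv$; then $[\alpha]\in\tspx\subseteq\Pi^{Sp}(\scrv,x_0)=\nu(\scrv,x_0)$, so $\alpha$ is null-$\scrv$-homotopic, hence null-$\scru$-homotopic. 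For $(2)\Rightarrow(1)$, given an arbitrary cover $\scrw$, I refine it to a path connected cover $\scru$; by $(2)$ the loop $\alpha$ is null-$\scru$-homotopic, so $[\alpha]\in\nulhu=\tspu\subseteq\tsp(\scrw,x_0)$ by Proposition \ref{tspanier1}, and since $\scrw$ was arbitrary, $[\alpha]\in\tspx$. This direction is essentially bookkeeping on results already proved.

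For $(1)\Leftrightarrow(3)$ I combine $\ker\Psi_X=\spx=\bigcap_{\scru}\spu$ with classical covering space theory \cite{Spanier66}. The lift $\tilde\alpha$ of $\alpha$ starting at $y_0$ is a loop precisely when $[\alpha]\in r_{\ast}\pi_1(Y,y_0)$, so $(3)$ is equivalent to $[\alpha]\in\bigcap_r r_{\ast}\pi_1(Y,y_0)$, the intersection taken over all covering maps $r:Y\to X$ with $r(y_0)=x_0$. I then show this intersection equals $\spx$. For $\spx\subseteq\bigcap_r r_{\ast}\pi_1(Y,y_0)$: each covering $r$ admits an open cover $\scru$ of $X$ by path connected, evenly covered sets; a loop in any element of $\scru$ lifts to a loop, so every generator $[\mu][\gamma][\reva]$ of $\spu$ lies in $r_{\ast}\pi_1(Y,y_0)$, whence $\spx\subseteq\spu\subseteq r_{\ast}\pi_1(Y,y_0)$ for every $r$. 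For the reverse inclusion I invoke Spanier's construction \cite{Spanier66}, which for each open cover $\scru$ of the path connected, locally path connected space $X$ produces a covering $r_{\scru}:X_{\scru}\to X$ with $r_{\scru\ast}\pi_1(X_{\scru},\tilde x_0)=\spu$; thus $\bigcap_r r_{\ast}\pi_1(Y,y_0)\subseteq\spu$ for every $\scru$, and therefore $\bigcap_r r_{\ast}\pi_1(Y,y_0)\subseteq\spx$.

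The main obstacle is the inclusion in $(1)\Leftrightarrow(3)$ that realizes each Spanier group $\spu$ as the image of an honest covering space: this is exactly Spanier's classical covering-space-from-open-cover construction, and it is the point at which local path connectedness is genuinely required (both for the construction and, implicitly, in reducing arbitrary covers to path connected evenly covered ones). Once that realization and the loop-lifting criterion are in hand, the remaining inclusion is standard, so the bulk of the argument is the careful assembly of the classical covering-theoretic facts alongside the machinery developed earlier in the paper.
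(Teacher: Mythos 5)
Your proposal is correct and follows essentially the same route as the paper: both equivalences are reduced to the identification $\ker\Psi_X=\pi^{Sp}(X,x_0)=\Pi^{Sp}(X,x_0)$ via Theorems \ref{leftexactsequence} and \ref{paracompact1}, with $(1)\Leftrightarrow(2)$ obtained from Theorem \ref{characterizethickspangrp} applied to the cofinal family of path connected covers and $(1)\Leftrightarrow(3)$ obtained from the classical correspondence between covers $\mathscr{U}$ with $\pi^{Sp}(\mathscr{U},x_0)\subseteq H$ and coverings with image $H$. The only difference is that you unpack that classical correspondence into its two inclusions with explicit lifting arguments, where the paper simply cites Lemma 2.5.11 and Theorem 2.5.13 of \cite{Spanier66}.
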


\begin{proof}
(1. $\Leftrightarrow$ 2.) By Theorems \ref{paracompact1} and \ref{leftexactsequence}, we have $\tspx=\spx=\ker\Psi_X$. Since $X$ is locally path connected, every open cover $\scru$ admits an open refinement $\scrv$ whose elements are path connected and thus $\tspv=\nulhv$ by Theorem \ref{characterizethickspangrp}. Since $\tspv=\nulhv$ for all $\scrv$ in a cofinal subset of $\ox$ the equality $\tspx=\bigcap_{\scru}\nulhu$ follows.

(1. $\Leftrightarrow$ 3.) Since $\spx=\ker\Psi_X$, we show $\spx$ is the intersection of all images $r_{\ast}(\pi_{1}(Y,y_0))$ where $r:Y\to X$, $r(y_0)=x_0$ is a covering map. This equality follows directly from the following well-known result from the covering space theory of locally path connected spaces (see Lemma 2.5.11 and Theorem 2.5.13 in \cite{Spanier66}): given a subgroup $H$ of $\pionex$, there is an open cover $\scru$ of $X$ such that $\spu\subseteq H$ if and only if there is a covering map $r:Y\to X$, $r(y_0)=x_0$ with $r_{\ast}(\pi_{1}(Y,y_0))=H$.
\end{proof}

\begin{example}
\label{example2} \emph{\ Consider the Peano continua $Y^{\prime}$ and $%
Z^{\prime}$ of \cite{FRVZ11} constructed as subspaces of $\mathbb{R}^3$
(these spaces also appear as the spaces $A$ and $B$ in \cite{CMRZZ08}
respectively). The space $Y^{\prime}$ is constructed by rotating the
topologists sine curve $T=\{(x,0,\sin(1/x))|0<x\leq 1\}\cup \{0\}\times
\{0\}\times [0,1]$ about the ``central axis" (i.e. the portion $\{0\}\times
\{0\}\times [0,1]$ of the z-axis) and attaching horizontal arcs so $%
Y^{\prime}$ is locally path connected and so the arcs become dense only on
the central axis. The space $Z^{\prime}$ is an inverted version of $%
Y^{\prime}$ in the sense that $T$ is now rotated around the vertical line
passing through $(1,0,\sin 1)$ and arcs are attached becoming dense around
the outside cylinder. Take the basepoint $z_0$ of $Z^{\prime}$ to lie on the
outer cylinder.\newline
\indent The authors of \cite{FRVZ11} observe both spaces have non-vanishing
Spanier group and thus fail $\pi_1$-shape injectivity. The Spanier group of $%
Y^{\prime}$ is known to be non-trivial \cite[Prop 3.2(1)]{FRVZ11}, however
slightly more effort is required to show a simple closed curve $L$, based at $z_0$, traversing the outer cylinder of $Z^{\prime}$ is homotopically non-trivial 
\cite[Lemma 3.1]{CMRZZ08} and satisfies $[L]\in \pi^{Sp}(Z^{\prime},z_0)$.
To do so, one can, given an open cover $\mathscr{U}$ of $X$, look for small
arcs in the intersections of elements of $\mathscr{U}$ and factor $[L]$ as a
product of generators of $\pi^{Sp}(\mathscr{U},z_0)$.\newline
\indent Using the above results, we can observe $[L]\in
\pi^{Sp}(Z^{\prime},z_0)$ without having to construct explicit
factorizations and without having to rely on convenient covers with
``enough" arcs lying in intersections. For instance, to see $[L]\in
\pi^{Sp}(Z^{\prime},z_0)$, let $\mathscr{U}$ be an open cover of $Z^{\prime}$
and find a subdivision of $0=t_0<t_1<\dots t_n=1$ so $L([t_{i-1},t_i])%
\subseteq U_i$ for $U_i\in \mathscr{U}$ and $U_1=U_n$. There is an arc $%
\alpha$ connecting $L(0)$ to the surface portion with image in $U_1$ and a
loop $L^{\prime}$ on the surface portion based at $\alpha(1)$ such that $%
L^{\prime}([t_{i-1},t_i])\subseteq U_i$ for each $i$. Thus $L$ is $%
\mathscr{U}$-homotopic to the homotopically trivial loop $\alpha\ast
L^{\prime}\ast \reva$. Since $[L]$ is null-$\mathscr{U}$-homotopic for
any given $\mathscr{U}$, we have $[L]\in
\ker\Psi_{X}=\pi^{Sp}(Z^{\prime},z_0)$ by Corollary \ref%
{uhomotopycharacterization}. }
\end{example}

\begin{remark}
\emph{\ If, in addition to the hypotheses on $X$ in Theorem \ref%
{leftexactsequence}, $\Lambda$ admits a cofinal, directed
subsequence $(\mathscr{U}_n,U_n)$ where $(\mathscr{U}_{k+1},U_{k+1})$
refines $(\mathscr{U}_{k},U_{k})$ (for instance, when $X$ is compact
metric), the exact sequence in Theorem \ref{leftexactsequence} extends to
the right via the first derived limit $\varprojlim^{1}$ for non-abelian
groups \cite[Ch. II, \textsection 6.2]{MS82}. In doing so, we obtain a
connecting function $\delta$ and an exact sequence 
\begin{equation*}
\xymatrix{ 1 \ar[r] & \spx \ar[r] & \pionex \ar[r]^-{\Psi_{X}} & \shape
\ar[r]^-{\delta} & \varprojlim^{1}\tsp(\scru_n,x_0) \ar[r] & \ast }
\end{equation*}
in the category of pointed sets. Thus elements in the image of $\Psi_{X}$
are precisely those mapped to basepoint of the first derived limit of the
inverse sequence 
\begin{equation*}
...\subseteq\Pi^{Sp}(\mathscr{U}_{3},x_0)\subseteq \Pi^{Sp}(\mathscr{U}%
_2,x_0)\subseteq\Pi^{Sp}(\mathscr{U}_1,x_0)
\end{equation*}
of thick Spanier groups. }
\end{remark}
\section{A shape group topology on $\pi_{1}(X,x_{0})$}

For a general space $X$, the fundamental group $\pi_{1}(X,x_{0})$ admits a
variety of distinct natural topologies \cite{Brazas,BDLM,Fabel2}, one of which is the pullback from the product of discrete groups \cite{Fabel1,Melikhov} described as follows.

By definition, the geometric realization of a simplicial complex is locally
contractible. Hence the inverse limit space $\check{\pi}_{1}(X,x_{0})$
admits a natural topology as a subspace of the product of discrete groups $\Pi _{\Lambda}\pi _{1}(|N(\mathscr{U})|,U_{0}).$ 

To impart a topology on $\pi _{1}(X,x_{0})$, recall the homomorphism $\Psi
_{X}:\pi _{1}(X,x_{0})\rightarrow \check{\pi}_{1}(X,x_{0})$ and declare the
open sets of $\pi _{1}(X,x_{0})$ to be precisely sets of the form $\Psi
_{X}^{-1}(W)$ where $W$ is open in $\check{\pi}_{1}(X,x_{0}).$ We refer to
this group topology on $\pi_{1}(X,x_{0})$ as the \textit{shape topology}. 

The space $\check{\pi}_{1}(X,x_{0})$ is Hausdorff (since $\check{\pi}_{1}(X,x_{0})$ is a subspace of the arbitrary product of Hausdorff spaces).
Hence, if $\psi _{X}$ is one-to-one, then $\pi _{1}(X,x_{0})$ is Hausdorff
(since in general the preimage under a continuous injective map of a
Hausdorff space is Hausdorff). Conversely, if $\psi _{X}$ is not one-to-one,
then $\pi _{1}(X,x_{0})$ is not Hausdorff (since, with the pullback topology, if $x\neq y$ and $\psi_{X}(x)=\psi _{X}(y)$ then $x$ and $y$ cannot be separated by open sets).
The foregoing is summarized as follows.

\begin{proposition}
The following are equivalent for any space $X$:

\begin{enumerate}
\item  $X$ is $\pi_1$-shape injective.

\item  $\pi _{1}(X,x_{0})$ is Hausdorff.
\end{enumerate}
\end{proposition}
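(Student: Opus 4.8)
The plan is to recognize this as a purely formal statement about the initial (pullback) topology and to reduce the equivalence to the injectivity of $\Psi_{X}$. By definition, $X$ is $\pi_1$-shape injective exactly when $\Psi_{X}:\pionex\to\shape$ is injective, and $\pionex$ carries the initial topology induced by $\Psi_{X}$, so every open subset of $\pionex$ has the form $\Psi_{X}^{-1}(W)$ for some open $W\subseteq\shape$. The one genuine input, already established in the preceding discussion, is that $\shape$ is Hausdorff, being a subspace of a product of discrete (hence Hausdorff) groups. With these observations in hand, both implications follow from elementary point-set considerations.

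For $(1)\Rightarrow(2)$, I would argue that an injective map into a Hausdorff space equips its domain with a Hausdorff initial topology. Concretely, given distinct $[\alpha],[\beta]\in\pionex$, injectivity of $\Psi_{X}$ gives $\Psi_{X}([\alpha])\neq\Psi_{X}([\beta])$; since $\shape$ is Hausdorff, I choose disjoint open sets $W_1,W_2\subseteq\shape$ separating these two images. Then $\Psi_{X}^{-1}(W_1)$ and $\Psi_{X}^{-1}(W_2)$ are disjoint open neighborhoods of $[\alpha]$ and $[\beta]$ respectively, so $\pionex$ is Hausdorff.

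For $(2)\Rightarrow(1)$, I would prove the contrapositive. If $\Psi_{X}$ fails to be injective, pick $[\alpha]\neq[\beta]$ with $\Psi_{X}([\alpha])=\Psi_{X}([\beta])$. Any open neighborhood of $[\alpha]$ has the form $\Psi_{X}^{-1}(W)$ with $\Psi_{X}([\alpha])\in W$; but then $\Psi_{X}([\beta])=\Psi_{X}([\alpha])\in W$ as well, so $[\beta]\in\Psi_{X}^{-1}(W)$. Thus every open set containing $[\alpha]$ also contains $[\beta]$, the two points cannot be separated, and $\pionex$ is not Hausdorff. (The same observation shows these points fail even the $T_0$ axiom.)

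There is essentially no substantive obstacle here: the proposition is a formal consequence of the definition of the shape topology as a pullback together with the already-noted Hausdorffness of $\shape$. The only point requiring care is to use the precise description of the initial topology, namely that the open sets are \emph{exactly} the preimages $\Psi_{X}^{-1}(W)$, rather than merely the continuity of $\Psi_{X}$. Continuity alone suffices for $(1)\Rightarrow(2)$, but the reverse implication relies on the fact that $\pionex$ possesses no open sets beyond those pulled back from $\shape$, so that indistinguishable image points force topologically indistinguishable points in $\pionex$.
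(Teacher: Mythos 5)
Your proof is correct and follows essentially the same route as the paper: the forward implication uses that the pullback of a Hausdorff topology along an injective continuous map is Hausdorff, and the reverse uses that two points with the same image under $\Psi_{X}$ cannot be separated since every open set is a preimage $\Psi_{X}^{-1}(W)$. The only input, the Hausdorffness of $\check{\pi}_{1}(X,x_{0})$ as a subspace of a product of discrete groups, is invoked identically in both arguments.
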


We characterize a basis for the shape topology using the short exact
sequences from Section \ref{mainsection}.

\begin{remark}
\label{opensubgroup} \emph{\ Note if $G$ is a topological group, $H$ is a
subgroup of $G$, and $K$ is a subgroup of $H$ which is open in $G$, then $H$
is also open in $G$ since $H$ decomposes as a union of open cosets of $K$. }
\end{remark}

\begin{lemma}
\label{tspanieropen2} If $X$ is locally path connected, paracompact
Hausdorff, then for every open cover $\mathscr{U}$ of $X$, both $\pi^{Sp}(\mathscr{U},x_0)$ and $\Pi^{Sp}(\mathscr{U},x_0)$ are open in $\pi_{1}(X,x_0)$.
\end{lemma}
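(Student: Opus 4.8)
The plan is to exhibit, inside both $\spu$ and $\tspu$, a single subgroup that is already open in $\pionex$ and then invoke Remark \ref{opensubgroup}. Concretely, I would produce one refinement $\scrw$ of $\scru$ for which $\tspw$ is open and which simultaneously satisfies $\tspw\subseteq \spu$ and $\tspw\subseteq \tspu$; since $\tspw$ is then an open subgroup sitting inside each of $\spu$ and $\tspu$, both of these decompose into cosets of $\tspw$ and are therefore open.

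To build $\scrw$, first choose a barycentric refinement $\scrv$ of $\scru$, which exists because $X$ is $T_1$ and paracompact. Exactly as in the proof of Theorem \ref{paracompact1}, this choice guarantees $\tspv\subseteq \spu$. Next refine $\scrv$ to a cover $\scrw$ all of whose members are path connected, using local path connectedness to pass to path components of the elements of $\scrv$, and arrange that a single element $W_0$ contains $x_0$ (by the refinement procedure described for $\Lambda$ in Section 2). Since $X$ is paracompact Hausdorff, $\scrw$ is normal, and the uniqueness of $W_0$ forces any partition of unity subordinated to $\scrw$ to satisfy $\phi_{W_0}(x_0)=1$, so $(\scrw,W_0)\in \Lambda$. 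Because $\scrw$ refines $\scrv$, which in turn refines $\scru$, Proposition \ref{tspanier1} yields the two containments $\tspw\subseteq \tspv\subseteq \spu$ and $\tspw\subseteq \tspu$.

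The crux is to see that $\tspw$ is open. Since $(\scrw,W_0)\in \Lambda$ and $\scrw$ consists of path connected sets, Theorem \ref{shortexactsequence} identifies $\tspw=\ker p_{\scrw\ast}$. The homomorphism $p_{\scrw\ast}:\pionex\to \pi_{1}(|\nervew|,W_0)$ is precisely the composite of $\Psi_{X}$ with the projection $q_{\scrw}:\shape\to \pi_{1}(|\nervew|,W_0)$ of the inverse limit onto its $\scrw$-coordinate, that is $p_{\scrw\ast}=q_{\scrw}\circ \Psi_{X}$, because the $\scrw$-component of $\Psi_{X}([\alpha])=([p_{\scru}\circ\alpha])$ is $[p_{\scrw}\circ\alpha]=p_{\scrw\ast}([\alpha])$. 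Now the factor $\pi_{1}(|\nervew|,W_0)$ is discrete, so $\{1\}$ is open; its preimage under the continuous projection $q_{\scrw}$ is open in $\shape$; and by definition of the shape topology as the pullback along $\Psi_{X}$, the set $\Psi_{X}^{-1}(q_{\scrw}^{-1}(\{1\}))=\ker p_{\scrw\ast}=\tspw$ is open in $\pionex$.

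Thus $\tspw$ is an open subgroup of $\pionex$ contained in both $\spu$ and $\tspu$, and Remark \ref{opensubgroup} gives that $\spu$ and $\tspu$ are each open, completing the argument. I expect the only delicate point to be the \emph{simultaneous} arrangement of all the properties of $\scrw$ — path connectedness of its members, membership in $\Lambda$, and refinement of the barycentric cover $\scrv$ — so that Theorem \ref{shortexactsequence} applies while the inclusions into $\spu$ and $\tspu$ are retained. Once $\scrw$ is in place, the openness of $\ker p_{\scrw\ast}$ is immediate from the pullback definition of the topology together with the discreteness of the nerve groups.
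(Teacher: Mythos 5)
Your proposal is correct and follows essentially the same route as the paper: identify the thick Spanier group of a sufficiently fine cover in $\Lambda$ with path connected members as $\ker p_{\mathscr{W}\ast}$ via Theorem \ref{shortexactsequence}, note this kernel is open because the nerve group is discrete and the topology is the pullback along $\Psi_X$, and conclude with Remark \ref{opensubgroup}. The only cosmetic difference is that you build a single cover $\mathscr{W}$ (refining a barycentric refinement) serving both $\pi^{Sp}(\mathscr{U},x_0)$ and $\Pi^{Sp}(\mathscr{U},x_0)$ at once, whereas the paper handles the two groups in two successive steps.
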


\begin{proof}
Suppose $\scru$ is an open cover of $X$. Pick a set $U_0\in \scru$ such that $x_0\in U_0$. Since $X$ is locally path connected, paracompact Hausdorff, there exists $(\scrv,V_0)\in \Lambda$ refining $(\scru,U_0)$ such that $\scrv$ consists of path connected sets (e.g. see the the second paragraph in the proof of Theorem \ref{leftexactsequence}). Let $p_{\scrv}:X\to |N(\scrv)|$ be a based canonical map. Recall $\pi_{1}(|N(\scrv)|,V_0)$ is discrete and observe $p_{\scrv\ast}:\pi_{1}(X,x_0)\to \pi_{1}(|N(\scrv)|,V_0)$ is continuous. Additionally, $\tsp(\scrv,V_0)=\ker p_{\scrv\ast}$ by Theorem \ref{shortexactsequence}. Thus $\tsp(\scrv,V_0)$ is an open subgroup of $\pionex$. Since $\scrv$ refines $\scru$, we have $\tspv\subseteq \tspu$ and thus $\tspu$ is open by Remark \ref{opensubgroup}.

Since $X$ is paracompact Hausdorff, there is an open refinement $\scrw$ of $\scru$ such that $\tspw\subseteq\spu$ (See the proof of Theorem \ref{paracompact1}). By the previous paragraph, $\tspw$ is open. Thus $\spu$ is open by Remark \ref{opensubgroup}.
\end{proof}

\begin{theorem}
\label{thickspanierbasis} If $X$ is locally path connected, paracompact
Hausdorff, the set of thick Spanier groups $\Pi^{Sp}(\mathscr{U},x_0)$ form
a neighborhood base at the identity of $\pi_{1}(X,x_0)$.
\end{theorem}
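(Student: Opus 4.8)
The plan is to combine Lemma \ref{tspanieropen2}, which already shows that every thick Spanier group is an open subgroup of $\pionex$, with an explicit description of the basic open neighborhoods of the identity in the shape topology. Since each $\tspu$ is open and contains $1$, every thick Spanier group is automatically an open neighborhood of the identity, so the only thing left to verify is that \emph{every} open neighborhood of $1$ contains some $\tspu$. To make this checkable, I would first unwind the definition of the shape topology: by construction $\shape$ carries the subspace topology from the product $\prod_{(\scru,U_0)\in\Lambda}\pi_1(|\nerveu|,U_0)$ of \emph{discrete} groups, and the shape topology on $\pionex$ is the pullback of this along $\Psi_X$. Because each factor is discrete, a basic open neighborhood of the identity in the product is obtained by pinning down the identity in finitely many coordinates $(\scru_1,U_1),\dots,(\scru_k,U_k)\in\Lambda$ and leaving the remaining coordinates free. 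Since $\Psi_X$ lands in $\shape$, the $\Psi_X$-preimage of such a set is exactly the finite intersection $\bigcap_{i=1}^{k}\ker p_{\scru_i\ast}$. Thus it suffices to show that each such finite intersection of kernels contains a thick Spanier group.

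Next I would produce the required thick Spanier group by refining. Because $\Lambda$ is directed by refinement, the covers $\scru_1,\dots,\scru_k$ admit a common refinement; and because $X$ is locally path connected and paracompact Hausdorff, this common refinement can be further refined to a cover $(\scrv,V_0)\in\Lambda$ all of whose elements are path connected, exactly by the construction in the second paragraph of the proof of Theorem \ref{leftexactsequence}. Since $\scrv$ refines each $\scru_i$, the relation $p_{\scru_i\ast}=p_{\scru_i\scrv\ast}\circ p_{\scrv\ast}$ forces $\ker p_{\scrv\ast}\subseteq\ker p_{\scru_i\ast}$ for every $i$, so $\ker p_{\scrv\ast}\subseteq\bigcap_{i=1}^{k}\ker p_{\scru_i\ast}$. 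As $(\scrv,V_0)\in\Lambda$ consists of path connected sets (and hence admits a canonical map), Theorem \ref{shortexactsequence} gives $\ker p_{\scrv\ast}=\tspv$. Therefore $\tspv\subseteq\bigcap_{i=1}^{k}\ker p_{\scru_i\ast}$, so the thick Spanier group $\tspv$ lies inside the chosen basic neighborhood of $1$, which is what we needed.

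The step I expect to require the most care is the correct identification of the basic open sets: one must genuinely use that each $\pi_1(|\nerveu|,U_0)$ is discrete, so that ``open in finitely many coordinates'' translates precisely into the finite intersection $\bigcap_i\ker p_{\scru_i\ast}$, and one must be sure that the refinement $\scrv$ can be taken \emph{simultaneously} inside $\Lambda$ and consisting of path connected sets so that Theorem \ref{shortexactsequence} is applicable. Both requirements are furnished by the hypotheses on $X$ through the cofinality discussion of Section 2 and the refinement construction in the proof of Theorem \ref{leftexactsequence}. Assembling the two halves of the argument---that each $\tspu$ is an open neighborhood of $1$, and that every basic (hence every) open neighborhood of $1$ contains some $\tspv$---yields that the thick Spanier groups $\tspu$ form a neighborhood base at the identity of $\pionex$.
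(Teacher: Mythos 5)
Your proposal is correct and follows essentially the same route as the paper: identify the basic neighborhoods of the identity in the pullback of the product of discrete groups as finite intersections $\bigcap_i \ker p_{\mathscr{U}_i\ast}$, use Theorem \ref{shortexactsequence} to identify the kernel over a common path-connected refinement $(\mathscr{V},V_0)\in\Lambda$ with $\Pi^{Sp}(\mathscr{V},x_0)$, and invoke Lemma \ref{tspanieropen2} for openness of the remaining thick Spanier groups. The only cosmetic difference is that the paper passes to the cofinal subset $\Lambda'$ of path-connected covers at the outset rather than refining at the end.
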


\begin{proof}
The hypotheses on $X$ allow us to replace $\Lambda$ with the cofinal directed subset $\Lambda '$ consisting of pairs $(\scru,U_0)$ such that $\scru$ contains only path connected sets. We alter notation for brevity. For $\lambda=(\scru,U_0)\in \Lambda '$, let $e_{\lambda}$ be the identity of $G_{\lambda}=\pi_{1}(|\nerveu|,U_0)$. Additionally, let $S_{\lambda}=\tspu$, and $p_{\lambda}=p_{\scru\ast}:\pionex\to G_{\lambda}$. If $\lambda '\geq \lambda$ in $\Lambda '$, let $p_{\lambda\lambda '}:G_{\lambda '}\to G_{\lambda}$ be homomorphism induced by the projection. Thus $\shape\cong\varprojlim (G_{\lambda},p_{\lambda\lambda '},\Lambda ')\subseteq \prod_{\Lambda '}G_{\lambda}$ and $\Psi_{X}(\alpha)=(p_{\lambda}(\alpha))$.\\
\indent In $\shape$, a basic open neighborhood of the identity is of the form \[W_F=\shape \cap\left(\prod_{\lambda\in F}\{e_{\lambda}\} \times \prod_{\lambda\in \Lambda '-F}G_{\lambda}\right)\]where $F$ is a finite subset of $\Lambda '$. Therefore the sets $\Psi_{X}^{-1}(W_F)$ form a neighborhood base at the identity of $\pionex$.

By Theorem \ref{shortexactsequence}, $S_{\lambda}=\ker p_{\lambda}$ for each $\lambda\in \Lambda '$. Thus $\Psi_{X}^{-1}(W_F)=\bigcap_{\lambda\in F}S_{\lambda}$ for each finite set $F\subset \Lambda '$. For given $F$, take $\mu\in \Lambda '$ such that $\mu$ refines $\lambda$ for each $\lambda\in F$. This gives $S_{\mu}\subseteq \Psi_{X}^{-1}(W_F)$. Consequently, the open subgroups $S_{\lambda}$ for a neighborhood base at the identity.
\end{proof}

\begin{corollary}
\label{spanierbasis} If $X$ is locally path connected, paracompact
Hausdorff, the set of Spanier groups $\pi^{Sp}(\mathscr{U},x_0)$ form a
neighborhood base at the identity of $\pi_{1}(X,x_0)$.
\end{corollary}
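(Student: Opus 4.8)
The plan is to deduce this corollary immediately from Theorem \ref{thickspanierbasis} together with the elementary comparison between ordinary and thick Spanier groups. To show that the family $\{\pi^{Sp}(\mathscr{U},x_0)\}$ is a neighborhood base at the identity, I would verify the two defining conditions in turn: first, that each $\pi^{Sp}(\mathscr{U},x_0)$ is itself a neighborhood of the identity, and second, that every neighborhood of the identity contains some member of the family.

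For the first condition, I would invoke Lemma \ref{tspanieropen2}, which (under the hypotheses that $X$ is locally path connected and paracompact Hausdorff) guarantees that each $\pi^{Sp}(\mathscr{U},x_0)$ is open in $\pi_{1}(X,x_0)$. Since $\pi^{Sp}(\mathscr{U},x_0)$ is a subgroup, it contains the identity, and is therefore an open neighborhood of the identity.

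For the second condition, I would argue as follows. Let $N$ be an arbitrary neighborhood of the identity in $\pi_{1}(X,x_0)$. By Theorem \ref{thickspanierbasis}, the thick Spanier groups form a neighborhood base at the identity, so there is an open cover $\mathscr{U}$ of $X$ with $\Pi^{Sp}(\mathscr{U},x_0)\subseteq N$. By Proposition \ref{tspanier1} we have the containment $\pi^{Sp}(\mathscr{U},x_0)\subseteq \Pi^{Sp}(\mathscr{U},x_0)$, and hence $\pi^{Sp}(\mathscr{U},x_0)\subseteq N$. Thus the neighborhood $N$ contains a Spanier group, as required.

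I do not expect any genuine obstacle in this argument, since all the substantive work has already been carried out in the earlier results: the exactness of the level sequence (Theorem \ref{shortexactsequence}) underlies the openness in Lemma \ref{tspanieropen2}, and the basis property for thick Spanier groups is exactly Theorem \ref{thickspanierbasis}. The only point requiring mild care is bookkeeping of the two directions of the ``neighborhood base'' definition — openness of each $\pi^{Sp}(\mathscr{U},x_0)$ on the one hand, and cofinality (that the Spanier groups shrink below any prescribed neighborhood) on the other — with the latter following by sandwiching $\pi^{Sp}(\mathscr{U},x_0)$ beneath the corresponding thick Spanier group.
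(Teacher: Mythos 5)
Your argument is correct and coincides with the paper's own proof: both establish openness of each $\pi^{Sp}(\mathscr{U},x_0)$ via Lemma \ref{tspanieropen2} and then obtain cofinality from Theorem \ref{thickspanierbasis} together with the inclusion $\pi^{Sp}(\mathscr{U},x_0)\subseteq \Pi^{Sp}(\mathscr{U},x_0)$ of Proposition \ref{tspanier1}. No issues.
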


\begin{proof}
By Theorem \ref{thickspanierbasis}, the thick Spanier groups $\Pi^{Sp}(\mathscr{U},x_0)$ form a neighborhood base at the identity of $\pi_{1}(X,x_0)$. For every open cover $\scru$ of $X$, the Spanier group $\pi^{Sp}(\mathscr{U},x_0)$ is an open subgroup of $\pi_1(X,x_0)$ by Lemma \ref{tspanieropen2} and $\pi^{Sp}(\mathscr{U},x_0)\subseteq \Pi^{Sp}(\mathscr{U},x_0)$ by Proposition \ref{tspanier1}. Thus the Spanier groups $\pi^{Sp}(\mathscr{U},x_0)$ form a neighborhood base at the identity.
\end{proof}
The basis in Corollary \ref{spanierbasis} shows the shape topology on $%
\pi_{1}(X,x_0)$ consists precisely of the data of the covering space theory
of $X$.

\begin{theorem}
\label{coveringclassification} Suppose $X$ is locally path connected,
paracompact Hausdorff and $H$ is a subgroup of $\pi _{1}(X,x_{0})$. The
following are equivalent:

\begin{enumerate}
\item  $H$ is open in $\pi_{1}(X,x_0)$.

\item  There is an open cover $\mathscr{U}$ of $X$ such that $\pi^{Sp}(\mathscr{U},x_0)\subseteq H$.

\item  There is a covering map $r:Y\to X$, $r(y_0)=x_0$ such that $r_{\ast}(\pi_{1}(Y,y_0))=H$.
\end{enumerate}
\end{theorem}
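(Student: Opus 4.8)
The plan is to decompose the claimed triangle of equivalences into two separate biconditionals: the topological equivalence $(1)\Leftrightarrow(2)$, which rests entirely on the neighborhood-base machinery already developed, and the covering-theoretic equivalence $(2)\Leftrightarrow(3)$, which is a direct invocation of the classical correspondence between Spanier subgroups and covering spaces. Since $X$ is assumed path connected and locally path connected, both ingredients are available.

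For $(2)\Rightarrow(1)$ I would proceed as follows. Suppose $\scru$ is an open cover with $\spu\subseteq H$. By Lemma \ref{tspanieropen2}, $\spu$ is an open subgroup of $\pionex$. Since $H$ is a subgroup containing the open subgroup $\spu$, Remark \ref{opensubgroup} immediately yields that $H$ is open, as $H$ decomposes into a union of cosets of $\spu$. For the converse $(1)\Rightarrow(2)$, assume $H$ is open; then, since the identity lies in $H$, the subgroup $H$ is in particular an open neighborhood of the identity. By Corollary \ref{spanierbasis}, the Spanier groups $\spu$ (ranging over open covers $\scru$) form a neighborhood base at the identity of $\pionex$, so some such $\spu$ must be contained in $H$. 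This establishes $(1)\Leftrightarrow(2)$.

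For $(2)\Leftrightarrow(3)$ I would appeal verbatim to the classical result from the covering space theory of connected, locally path connected spaces already quoted in the proof of Corollary \ref{uhomotopycharacterization} (Spanier's Lemma 2.5.11 together with Theorem 2.5.13): for a subgroup $H$ of $\pionex$, there exists an open cover $\scru$ with $\spu\subseteq H$ if and only if there is a covering map $r:Y\to X$ with $r(y_0)=x_0$ and $r_{\ast}(\pi_1(Y,y_0))=H$. Here the hypothesis that $X$ is path connected and locally path connected is exactly what is needed, and the mismatch between the inclusion in $(2)$ and the equality in $(3)$ is precisely what Spanier's theorem resolves.

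I do not anticipate a genuine obstacle at this stage, since the substantive work has already been absorbed into the earlier results; the only real content is recognizing which prior statement supplies each implication. If forced to name a delicate point, it is the direction $(1)\Rightarrow(2)$: it is essential that the Spanier groups form a genuine neighborhood \emph{base} (Corollary \ref{spanierbasis}), not merely a subbase, so that an arbitrary open $H$ containing the identity must contain a single $\spu$; this is in turn underwritten by the cofinality and directedness exploited in Theorem \ref{thickspanierbasis}.
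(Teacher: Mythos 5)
Your proposal is correct and follows essentially the same route as the paper, which likewise derives $(1)\Leftrightarrow(2)$ from Corollary \ref{spanierbasis} together with Remark \ref{opensubgroup} and cites the classical Spanier correspondence (as used in Corollary \ref{uhomotopycharacterization}) for $(2)\Leftrightarrow(3)$. Your additional appeal to Lemma \ref{tspanieropen2} for the openness of $\pi^{Sp}(\mathscr{U},x_0)$ is a harmless refinement of the same argument.
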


\begin{proof}
(1. $\Leftrightarrow$ 2.) follows directly from Corollary \ref{spanierbasis} and Remark \ref{opensubgroup}. (2. $\Leftrightarrow$ 3.) is a well-known result from covering space theory (used above in the proof of Corollary \ref{uhomotopycharacterization}).
\end{proof}
Thus the well-known classification of covering spaces can be extended to
non-semilocally simply connected spaces in the following way: If $X$ is
locally path connected, paracompact Hausdorff, then there is a canonical
bijection between the equivalence classes of connected coverings of $X$ (in
the classical sense) and conjugacy classes of open subgroups of $%
\pi_{1}(X,x_0)$ with the shape topology.

\end{document}